\numberwithin{equation}{section}
\newtheorem{theorem}{Theorem}
\newtheorem{lemma}[theorem]{Lemma}
\theoremstyle{definition}
\newtheorem*{definition}{Definition}
\let\oldproofname=\proofname
\renewcommand{\proofname}{\rm\bf{\oldproofname}}
\def \D {{\mathcal{D}}}
\def \P {{\mathcal {P}}}
\def \a {\alpha}
\def \b {\beta}
\def \t {\tau}
\def \mod#1{{\:({\rm mod}\ #1)}}
\def \md#1{{\:({\rm mod}\ #1)}}
\def \deg {{\rm deg}}
\def \N {{\rm Nbd}}
\def \case#1 {\noindent {\bf Case #1.}\quad}
\def \efloor#1{{\lfloor #1 \rfloor_{\rm e}}}
\def \eceil#1{{\lceil #1 \rceil_{\rm e}}}
\def \no {\nu_{\rm o}}
\title{\bf Decomposing $K_{u+w}-K_u$ into cycles of various lengths}
\author{
Daniel Horsley and Rosalind A. Hoyte \\
School of Mathematical Sciences \\
Monash University \\
Vic 3800, Australia \\[0.1cm]
\texttt{danhorsley@gmail.com}, \texttt{rahoyte@outlook.com}
}
\date{ }
\begin{document}
\maketitle\thispagestyle{empty}
\def\baselinestretch{1.1}\small\normalsize
\sloppy

\begin{abstract}
We prove that the complete graph with a hole $K_{u+w}-K_u$ can be decomposed into cycles of arbitrary specified lengths provided that the obvious necessary conditions are satisfied, each cycle has length at most $\min(u,w)$, and the longest cycle is at most three times as long as the second longest. This generalises existing results on decomposing the complete graph with a hole into cycles of uniform length, and complements work on decomposing complete graphs, complete multigraphs, and complete multipartite graphs into cycles of arbitrary specified lengths.
\end{abstract}

\section{Introduction}\label{Section:Intro}
A \emph{decomposition} of a graph $G$ is a collection of subgraphs of $G$ whose edge sets form a partition of the edge set of $G$.
There is extensive literature, dating back to the 19th century \cite{Kirkman,Lucas}, concerning the existence of decompositions of a graph $G$ into cycles of specified lengths $m_1,\ldots,m_\t$. Much attention has focussed on decompositions into cycles of uniform length (when $m_1=\cdots=m_\t$), but more recently results on decompositions into cycles of mixed lengths have also been obtained.

Most notably, the general mixed-length problem has been completely solved for decompositions of complete graphs \cite{BryHorPet14}. This result was recently generalised to complete multigraphs \cite{BHMS11}. Partial results have also been obtained for decompositions of complete bipartite graphs \cite{ChoFuHua99, Horsley12, Sotteau81} and complete multipartite graphs \cite{BahSaj16}. Here, we add to this body of work by addressing the question of when a complete graph with a hole admits a decomposition into cycles of arbitrary specified lengths. For positive integers $u$ and $w$, the complete graph of order $u+w$ with a hole of size $u$, denoted $K_{u+w}-K_u$, is the graph obtained from a complete graph of order $u+w$ by removing the edges of a complete subgraph of order $u$. Our main result is as follows.

\begin{theorem}\label{Theorem:MainTheorem}
Let $u$ and $w$ be integers with $w\geq 10$, and let $m_1,\ldots,m_\tau$ be a nondecreasing list of integers such that $m_\tau\leq \min (u,w,3m_{\tau-1})$. There exists a decomposition of $K_{u+w}-K_u$ into cycles of lengths $m_1,\ldots,m_\tau$ if and only if
\begin{itemize}
	\item[(i)]
$u$ is odd and $w$ is even;
	\item[(ii)]
$m_1\geq 3$;
	\item[(iii)]
$m_1+\cdots+m_\tau=\binom{u+w}{2}-\binom{u}{2}$; and
	\item[(iv)]
there are at most $\binom{w}{2}$ odd entries in $m_1,\ldots,m_\tau$.
\end{itemize}
\end{theorem}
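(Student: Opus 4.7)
All four conditions are routine. In $K_{u+w}-K_u$ a hole vertex has degree $w$ and an outside vertex has degree $u+w-1$; both must be even, giving (i). Items (ii) and (iii) are immediate. For (iv), the $u$ hole vertices form an independent set, so in any cycle $C$ the bipartite (hole--outside) edges appear an even number of times, forcing the number of edges of $C$ inside the outside clique to have the same parity as $|C|$; in particular every odd cycle consumes at least one of the $\binom{w}{2}$ outside-outside edges.

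\textbf{Sufficiency plan.} Write $H$ for the hole and $W$ for its complement, and observe $K_{u+w}-K_u=K_{u,w}\cup K_w$ on $H\cup W$. My strategy is to split each target cycle of length $m_i$ into an ``outside part'' of length $a_i$ and a bipartite part of length $m_i-a_i$. First I would choose nonnegative integers $a_1,\ldots,a_\tau$ with $a_i\le m_i$, $a_i\equiv m_i\pmod 2$, and $\sum_i a_i=\binom{w}{2}$, so that every odd $m_i$ is paired with some $a_i\ge 1$ (feasible by (iv)). Next I decompose $K_w$ into paths and cycles of lengths $a_1,\ldots,a_\tau$ (paths when $0<a_i<m_i$, cycles when $a_i=m_i$, and nothing when $a_i=0$), using a mixed-length trail decomposition of $K_w$ that can be derived from the Bryant--Horsley--Pettersson theorem, e.g.\ by adjoining an auxiliary vertex to pass between the path and cycle regimes. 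Finally, for each path $P_i$ with endpoints $x_i,y_i\in W$ I attach a bipartite $x_iy_i$-path of even length $m_i-a_i$ routed through $(m_i-a_i)/2$ distinct hole vertices (and $(m_i-a_i)/2-1$ internal vertices of $W$) to close $P_i$ into a cycle of length $m_i$; any $a_i=0$ cycle is built entirely inside the residual bipartite graph.

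\textbf{Execution and main obstacle.} Three subtasks arise: choosing the $a_i$, realising the path/cycle decomposition of $K_w$, and orchestrating the bipartite extensions so that what remains of $K_{u,w}$ decomposes into the $a_i=0$ even cycles. The third is the real difficulty, since the extension paths must avoid internal vertex collisions, balance their hole and outside usage, and leave a residual bipartite graph of the correct degree sequence for an even-cycle mixed-length decomposition (for which bipartite analogues of the Bryant--Horsley--Pettersson theorem are available in the literature). I would attack this by induction on $\tau$, removing one or two cycles at a time and reducing to a strictly smaller instance of the same theorem, together with several boundary regimes treated by direct construction: $\tau$ small, close to $\binom{w}{2}$ odd entries, or a single dominant cycle of length near $\min(u,w)$. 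The hypothesis $m_\tau\le 3m_{\tau-1}$ is used in its familiar role of providing the slack needed when merging two short cycles into a longer one (or vice versa) during these modifications, and $w\ge 10$ gives enough space for the base cases and to avoid degenerate configurations when the hole is large relative to the outside.
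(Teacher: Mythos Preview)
Your necessity argument is fine and matches the paper's Lemma~\ref{Lemma:NecessaryConditions}.

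For sufficiency, your proposed route is genuinely different from the paper's, but as written it has real gaps at exactly the steps you flag as difficult. Two of them are fatal in the current form.

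First, the ``mixed-length path/cycle decomposition of $K_w$'' you need in step~2 is not a known theorem, and the auxiliary-vertex trick does not obviously produce it. Adjoining a vertex turns $K_w$ into $K_{w+1}$, and Bryant--Horsley--Pettersson then gives a cycle decomposition of $K_{w+1}$; but to recover your prescribed paths you would need the auxiliary vertex to lie on precisely the cycles corresponding to the $0<a_i<m_i$ indices, with the correct multiplicity (namely, degree~$w$ at that vertex equals twice the number of paths), and this forces exactly $w/2$ of the $a_i$ to be nonzero proper parts. You have no control over that count, and in many instances of the theorem it is simply impossible to arrange (for example when very few $m_i$ are odd but the odd ones are large).

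Second, and more seriously, after you thread the bipartite extension paths through $K_{u,w}$, the residual graph on which you want an even-cycle decomposition is \emph{not} a complete bipartite graph; it is $K_{u,w}$ minus an irregular collection of paths. No existing bipartite analogue of Bryant--Horsley--Pettersson applies to such a graph, and your ``induction on $\tau$, removing one or two cycles at a time'' does not explain how removing a cycle returns you to an instance of the same theorem: the hypothesis concerns $K_{u+w}-K_u$, not $K_{u+w}-K_u$ minus a cycle.

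For comparison, the paper's proof works in the opposite direction. Rather than splitting target cycles into pure and bipartite pieces, it first builds a \emph{base decomposition} of $K_{u+w}-K_u$ consisting of a sublist $N$ of the target lengths together with a large supply of short cycles (lengths $3,4,5,6$ and one auxiliary even length $k$), arranged so that each short cycle contains at most one pure edge. This is done by three technical lemmas (Lemmas~\ref{Lemma:BaseDecomp_FewCrossOdds}, \ref{Lemma:BaseDecomp_ManyCrossOdds_large_m}, \ref{Lemma:BaseDecomp_ManyCrossOdds_small_m}) that combine known decompositions of $K_W$, $K_{W\cup\{\alpha\}}$ and $K_{U,W}$. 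Then a single ``merging'' lemma (Lemma~\ref{Lemma:GeneralJoining}), proved via an edge-switching argument, takes a packing whose leave is an $h$-cycle plus an $m_1$-cycle plus an $m_2$-cycle (each with at most one pure edge, $m_1+m_2\le 3h$) and repacks so the leave is an $h$-cycle plus an $(m_1+m_2)$-cycle. Iterating this (Lemma~\ref{Lemma:JoinEmAll}) coalesces the short cycles into the remaining target lengths; the condition $m_\tau\le 3m_{\tau-1}$ is used precisely to guarantee a suitable ``helper'' $h$-cycle is always available during the merging. The case where the sum of odd entries is small is handled separately (Lemma~\ref{Lemma:FewOdd_general}) by a construction closer in spirit to yours, but even there the bipartite piece remains a genuine $K_{u-1,w}$ and existing bipartite results apply directly.
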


Theorem~\ref{Theorem:MainTheorem} is also an extension of work on decomposing the complete graph with a hole into cycles of uniform length. Study of this problem began in 1973 when Doyen and Wilson \cite{DoyWil73} investigated decompositions into $3$-cycles, and decompositions into $m$-cycles for various $m$ have been considered in \cite{BryHofRod96,BryRodSpi97,MenRos83}. The strongest results obtained to date are found in \cite{Horsley12} and \cite{DWcurrent}, where the problem is solved whenever the number of vertices outside the hole is not too small compared with the cycle length.

It is not difficult to see that conditions (i)-(iv) in Theorem \ref{Theorem:MainTheorem} are necessary for the existence of a decomposition of $K_{u+w}-K_u$ into cycles of lengths $m_1,\ldots,m_\tau$. We establish this in Lemma~\ref{Lemma:NecessaryConditions}. Note that if $m_1=m_2=\cdots=m_\tau$, then Lemma~\ref{Lemma:NecessaryConditions} specialises to \cite[Lemma 1.1]{BryRodSpi97}, which is the analogous result for cycles of uniform length.

\begin{lemma}\label{Lemma:NecessaryConditions}
Let $m_1,\ldots,m_\tau$ be a nondecreasing list of integers and let $u$ and $w$ be positive integers. If there exists a decomposition of $K_{u+w}-K_u$ into cycles of lengths $m_1,\ldots,m_\tau$ then
\begin{itemize}
	\item[(i)]
$u$ is odd and $w$ is even;
	\item[(ii)]
$m_1\geq 3$ and $m_\tau\leq \min(u+w,2w)$;
	\item[(iii)]
$m_1+\cdots+m_\tau=\binom{u+w}{2}-\binom{u}{2}$;
	\item[(iv)]
there are at most $\binom{w}{2}$ odd entries in $m_1,\ldots,m_\tau$; and
	\item[(v)]
$\tau \geq \frac{u+w-1}{2}$.
\end{itemize}
\end{lemma}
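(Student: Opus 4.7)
The plan is to verify each of (i)-(v) in turn using standard degree-counting and parity arguments. Let $H$ denote the hole vertex set (of size $u$) and $W$ the non-hole vertex set (of size $w$), so that every vertex in $H$ has degree $w$ in $K_{u+w}-K_u$ and every vertex in $W$ has degree $u+w-1$.

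For (i), every vertex in a graph admitting a cycle decomposition must have even degree. Vertices in $H$ have degree $w$, forcing $w$ to be even, and vertices in $W$ have degree $u+w-1$, forcing $u$ to be odd. Condition (iii) is immediate: summing cycle lengths counts the edges of $K_{u+w}-K_u$, which is $\binom{u+w}{2}-\binom{u}{2}$.

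For (ii), the lower bound $m_1 \geq 3$ is just the definition of a cycle. For $m_\tau \leq u+w$, note that a cycle has at most as many vertices as the host graph. For $m_\tau \leq 2w$, observe that $H$ is an independent set in $K_{u+w}-K_u$, so no cycle can contain two consecutive vertices of $H$; hence in any cycle of length $m$ at least $\lceil m/2 \rceil$ vertices lie in $W$, giving $\lceil m/2 \rceil \leq w$ and so $m \leq 2w$.

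The main substantive step is (iv), and this is where I would spend the most care. The idea is a parity argument using the bipartition $(H,W)$. Classify the edges of $K_{u+w}-K_u$ as \emph{mixed} (one endpoint in each part) or \emph{pure} (both endpoints in $W$); note there are exactly $\binom{w}{2}$ pure edges. Traversing any cycle and tracking the partition class of the current vertex, each mixed edge flips the class while each pure edge preserves it, and since the traversal must return to its starting vertex the number of mixed edges in any cycle is even. Hence in any cycle the number of pure edges has the same parity as the cycle length, so each odd cycle contains at least one pure edge. Since the cycles edge-partition $K_{u+w}-K_u$, the number of odd cycles is at most the number of pure edges, namely $\binom{w}{2}$.

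Finally, for (v), fix any vertex $v \in W$, which has degree $u+w-1$ in $K_{u+w}-K_u$. Each cycle in the decomposition uses either $0$ or $2$ edges at $v$, so at least $(u+w-1)/2$ cycles are needed to cover the edges at $v$; since $u$ is odd and $w$ is even by (i), $(u+w-1)/2$ is an integer and the bound $\tau \geq (u+w-1)/2$ follows. I don't foresee any real obstacles; the only subtlety is the two-colouring argument for (iv), and everything else reduces to counting edges, counting cycle-ends at a vertex, or invoking the independence of $H$.
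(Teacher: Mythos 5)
Your proposal is correct and follows essentially the same argument as the paper: degree parity for (i), the observation that the hole is independent (so at least half of each cycle's vertices lie outside it) for (ii), edge counting for (iii), the pure-edge parity argument showing every odd cycle contains a pure edge for (iv), and counting cycles through a fixed vertex outside the hole for (v). No gaps; your write-up just spells out in more detail the parity step the paper states briefly.
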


\begin{proof}
Suppose there exists a decomposition of $K_{u+w}-K_u$ into cycles of lengths $m_1,\ldots,m_\tau$.
Since the degree of each vertex must be even, we have $w\equiv 0\mod{2}$ and $u+w-1\equiv 0\mod{2}$ so (i) follows.
Clearly $m_1\geq 3$ and $m_\tau\leq u+w$. Also, every cycle has at least half of its vertices outside the hole, so $m_\t \leq 2w$ and thus (ii) follows.
Condition (iii) clearly holds.
Any odd cycle in $K_{u+w}-K_u$ must contain at least one edge that is not incident with a  vertex in the hole, thus (iv) follows.
Finally, a fixed vertex outside the hole must be in at least $\frac{u+w-1}{2}$ cycles, so (v) follows.
\end{proof}

The remainder of the paper is concerned with proving the existence of cycle decompositions of $K_{u+w}-K_u$. Our general approach is to first construct decompositions of $K_{u+w}-K_u$ that contain collections of short cycles and then `merge' these cycles together to construct longer cycles. This method is similar to that used in \cite{DWcurrent}. Lemma~\ref{Lemma:GeneralJoining} below is the key result that allows us to merge two cycle lengths.

\section{Notation and preliminary results}

We now introduce some definitions and notation that we will require throughout the paper.

A \emph{packing} of a graph $G$ is a decomposition of some subgraph $H$ of $G$. The \emph{leave} of the packing is the graph obtained by removing the edges of $H$ from $G$. We define the \emph{reduced leave} of a packing as the graph obtained from its leave by deleting any isolated vertices. If the leave contains no edges then the reduced leave is a trivial graph with no vertices or edges. For a list of positive integers $m_1,\ldots,m_\t$, an \emph{$(m_1,\ldots,m_\t)$-packing} or \emph{$(m_1,\ldots,m_\t)$-decomposition} of a graph $G$ is a packing or decomposition of $G$ with $\t$ cycles of lengths $m_1,\dots,m_\t$. We say that a graph is even if each of its vertices has even degree. Note that a graph is even if and only if it has some decomposition into cycles.

For a set $V$, let $K_V$ denote the complete graph with vertex set $V$. For disjoint sets $U$ and $W$, let $K_{U,W}$ denote the complete bipartite graph with parts $U$ and $W$. For graphs $G$ and $H$, let $G \cup H$ denote the graph with vertex set $V(G) \cup V(H)$ and edge set $E(G) \cup E(H)$. Let $G-H$ denote the graph with vertex set $V(G)$ and edge set $E(G) \setminus E(H)$.

The neighbourhood $\N_G(x)$ of a vertex $x$ in a graph $G$ is the set of vertices in $G$ that are adjacent to $x$ (not including $x$ itself). We say vertices $x$ and $y$ in a graph $G$ are \emph{twin in $G$} if $\N_G(x)\setminus\{y\} =\N_G(y)\setminus\{x\}$. Let $U$ and $W$ be disjoint sets and consider the graph $K_{U\cup W}-K_U$. Note that two vertices are twin in $K_{U\cup W}-K_U$ if and only if they are both in $U$ or both in $W$. We say an edge $xy$ of $K_{U\cup W}-K_U$ is a \emph{pure edge} if $x,y\in W$ and we say that it is a \emph{cross edge} if $x$ or $y\in U$. Note that an even subgraph $G$ of $K_{U\cup W}-K_U$ has an even number of cross edges and hence $|E(G)|$ is congruent to the number of pure edges in $G$ modulo $2$.

The $m$-cycle with vertices $x_0,x_1,\ldots,x_{m-1}$ and edges $x_ix_{i+1}$ for $i\in\{0,1,\ldots,m-1\}$ (with subscripts modulo $m$) is denoted by $(x_0,x_1,\ldots,x_{m-1})$ and the $n$-path with vertices $y_0,y_1,\ldots,y_n$ and edges $y_jy_{j+1}$ for $j\in \{0,1,\ldots,n-1\}$ is denoted by $[y_0,y_1,\ldots,y_n]$. We will say that $y_0$ and $y_n$ are the end vertices of this path. We allow trivial $0$-paths that consist of a single vertex and no edges.

Given a permutation $\pi$ of a set $V$, a subset $S$ of $V$ and a graph $G_0$ with $V(G_0)\subseteq V$, let $\pi(S)$ be the set $\{\pi(x):x \in S\}$ and $\pi(G_0)$ be the graph with vertex set $\{\pi(x):x\in V(G_0)\}$ and edge set $\{\pi(x)\pi(y):xy\in E(G_0)\}$.

\begin{definition}
Let $G$ be a graph and let $\mathcal{P} = \{G_1,\ldots,G_t\}$ be a packing of the graph $G$. We say that another packing $\mathcal{P}'$ of $G$ is a \emph{repacking} of $\mathcal{P}$ if $\mathcal{P}' = \{G'_1,\ldots,G'_t\}$ where for each $i \in \{1,\ldots,t\}$ there is a permutation $\pi_i$ of $V(G)$ such that $\pi_i(G_i)=G'_i$ and $x$ and $\pi_i(x)$ are twin for each $x \in V(G)$.
\end{definition}

If $\P$ is a packing of $K_{u+w}-K_u$ with leave $L$ and $\P'$ is a repacking of $\P$ with leave $L'$, then $L$ and $L'$ have the same number of pure and cross edges. This fact will be used frequently through the remainder of the paper.

\begin{lemma}\label{Lemma:GeneralJoining}
Let $u \geq 5$ and $w \geq 2$ be integers such that $u$ is odd and $w$ is even, and let $M$ be a list of integers. Suppose there exists an $(M)$-packing $\P$ of $K_{u+w}-K_u$ with a reduced leave that has exactly $\mu$ pure edges, where $\mu\in\{0,1,2\}$, and has a decomposition into an $h$-cycle, an $m_1$-cycle and an $m_2$-cycle where $h$ is odd if $\mu=2$. If $m_1+m_2\leq 3h$ and $m_1+m_2+h\leq \min (2u+3, 2w+1, u+w)$, then there exists a repacking of $\P$ whose reduced leave has a decomposition into an $h$-cycle and an $(m_1+m_2)$-cycle each containing at most one pure edge.
\end{lemma}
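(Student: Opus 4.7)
Plan. Let $L=C_h\cup C_1\cup C_2$ be the leave of $\P$ in its given decomposition. I seek a repacking $\P'$ whose leave decomposes into an $h$-cycle and an $(m_1+m_2)$-cycle, each containing at most one pure edge. The pure-edge condition is in fact automatic for any repacking producing such a decomposition. Indeed, a repacking preserves the number $\mu$ of pure edges in the leave, and for any even subgraph $G$ of $K_{U\cup W}-K_U$ the number of pure edges of $G$ has the same parity as $|E(G)|$, as noted in the paper. So the new $h$-cycle contains $p_1$ pure edges with $p_1\equiv h\pmod{2}$, and the new $(m_1+m_2)$-cycle contains $p_2$ pure edges with $p_2\equiv m_1+m_2\pmod{2}$, where $p_1+p_2=\mu$. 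When $\mu\in\{0,1\}$ this immediately gives $p_1,p_2\leq 1$; when $\mu=2$ the hypothesis that $h$ is odd forces $m_1+m_2$ odd as well (since $h+m_1+m_2\equiv 2\equiv 0\pmod{2}$), so both $p_1$ and $p_2$ are odd and sum to $2$, giving $p_1=p_2=1$. Hence it suffices to produce a repacking whose leave decomposes into cycles of lengths $h$ and $m_1+m_2$.

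Merging at a shared vertex is classical: if two cycles $A,B$ of the leave share a vertex $v$, then $A\cup B$ is Eulerian with $v$ of degree $4$ and every other vertex of degree $2$, so an Euler tour at $v$ that alternates between the two cycles yields a single cycle of length $|E(A)|+|E(B)|$ on exactly the edges of $A\cup B$. Consequently, if $C_1$ and $C_2$ already share a vertex in $L$, no repacking is needed: keep $C_h$ and merge $C_1$ with $C_2$ at their common vertex.

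When $V(C_1)\cap V(C_2)=\emptyset$ I use twin swaps to reduce to the shared-vertex case. Since $U$ is independent in $K_{U\cup W}-K_U$, every cycle has a vertex in $W$; pick $v_1\in V(C_1)\cap W$ with $C_1$-neighbours $a,b$ and any $v_2\in V(C_2)\cap W$, so that $v_1$ and $v_2$ are twin. If there is a cycle $G\in\P$ containing $v_2$ but not $v_1$ whose two $G$-neighbours of $v_2$ are precisely $a$ and $b$, then applying the twin transposition $\pi=(v_1\ v_2)$ to $G$ alone replaces the edges $v_2a,v_2b$ of $G$ by $v_1a,v_1b$ in $\pi(G)$. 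Correspondingly the leave exchanges $v_1a,v_1b$ for $v_2a,v_2b$, so $C_1$ is replaced by its copy through $v_2$, which now shares $v_2$ with $C_2$, and the merge above finishes the argument. When no such single $G$ exists, a chain of two twin swaps (handling the edges $v_1a$ and $v_1b$ in turn, possibly routing through an auxiliary twin pair in $U$) produces the same effect.

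The main obstacle is verifying this fallback across all configurations: guaranteeing that a valid sequence of twin swaps exists without creating edge collisions elsewhere in the packing. The bounds $m_1+m_2+h\leq \min(2u+3,\,2w+1,\,u+w)$ ensure that both $U$ and $W$ contain many twin vertices untouched by $C_h\cup C_1\cup C_2$, which serve as scratch space to avoid collisions, and $m_1+m_2\leq 3h$ prevents size imbalances large enough to overwhelm that scratch space; a careful case analysis converts these bounds into the required existence statement.
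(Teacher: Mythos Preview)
Your opening observation---that once the leave decomposes into an $h$-cycle and an $(m_1+m_2)$-cycle the ``at most one pure edge'' condition follows automatically from parity---is correct and matches what the paper uses implicitly.

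The proof breaks down immediately after that. Your central claim, that two edge-disjoint cycles $A$ and $B$ sharing a vertex $v$ can be ``merged'' into a single cycle of length $|E(A)|+|E(B)|$, is false. The graph $A\cup B$ has $v$ of degree $4$; a cycle has every vertex of degree $2$. What you describe is an Euler \emph{circuit} (a closed trail), not a cycle: it visits $v$ twice. So when $C_1$ and $C_2$ share a vertex you have a $2$-chain in the leave, and the entire problem is to turn that $2$-chain (together with the $h$-cycle) into an $h$-cycle and an $(m_1+m_2)$-cycle. This is exactly the hard part, and it does not come for free.

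The paper spends all of Section~3 on this. Even in the simplest situation---leave equal to the vertex-disjoint union of an $h$-cycle and a $2$-chain of size $m_1+m_2$---one must invoke Lemma~\ref{Lemma:DW1Degree4Vertex} (for $\mu=2$) or Lemma~\ref{Lemma:1Degree4Vertex} (for $\mu=1$), and proving Lemma~\ref{Lemma:1Degree4Vertex} requires the chain-and-ring machinery of Lemmas~\ref{Lemma:DWFigureOfEight3}--\ref{Lemma:2Cycles}. These results carefully transform $(p,q)$-chains into the desired pair of cycles via sequences of switches that track the location of the pure edge, the parity of lengths, and the positions of link vertices in $U$ versus $W$. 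When the three cycles of the leave overlap more intricately (Cases~2--4 of the actual proof), further reductions via Lemmas~\ref{Lemma:EqualiseOneStep}, \ref{Lemma:Isolates}, \ref{Lemma:MaxComponents}, and \ref{Lemma:PickApart} are needed before those lemmas even apply. Your closing sentence (``a careful case analysis converts these bounds into the required existence statement'') gestures at this work but supplies none of it; the bounds $m_1+m_2\le 3h$ and $m_1+m_2+h\le\min(2u+3,2w+1,u+w)$ enter the argument in specific technical ways (e.g.\ via Lemma~\ref{Lemma:Isolates} and Lemma~\ref{Lemma:MaxComponents}) that your sketch does not touch.
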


The following lemma from \cite{DWcurrent} is a crucial technique in proving our results. This `switching' method was first applied to packings of the complete graph \cite{BryHor09, BryHor10, BryHorMae05}, and has since been generalised to other graphs \cite{BHMS11,Horsley12,DWcurrent}.

\begin{lemma}[{\cite[Lemma 2]{DWcurrent}}]\label{Lemma:CycleSwitch}
Let $u$ and $w$ be positive integers with $u$ odd and $w$ even, and let $M$ be a list of integers. Let $\mathcal{P}$ be an $(M)$-packing of $K_{u+w}-K_u$ with leave $L$, let $\a$ and $\b$ be twin
vertices in $K_{u+w}-K_u$, and let $\pi$ be the transposition $(\a\b)$. Then there exists a partition of the set $(\N_L(\a)\cup \N_L(\b)) \setminus
((\N_L(\a)\cap \N_L(\b)) \cup \{\a,\b\})$ into pairs such that, for each pair
$\{x,y\}$ of the partition, there exists an $(M)$-packing $\mathcal{P}'$ of $K_{u+w}-K_u$ whose leave
$L'$ differs from $L$ only in that $\a x$, $\a y$, $\b x$ and $\b y$ are edges in
$L'$ if and only if they are not edges in $L$. Furthermore, if $\mathcal{P} = \{C_1,C_2,\ldots,C_t\}$, then $\mathcal{P}' = \{C'_1,C'_2,\ldots,C'_t\}$, where, for each $i \in \{1,\ldots,t\}$, $C'_i$ is a cycle of the same length as $C_i$ such that
\begin{itemize}
    \item[(i)]
if neither $\alpha$ nor $\beta$ is in $V(C_i)$, then $C'_i=C_i$;
    \item[(ii)]
if exactly one of $\alpha$ and $\beta$ is in $V(C_i)$, then either
$C'_i=C_i$ or $C'_i=\pi(C_i)$; and
    \item[(iii)]
if both $\alpha$ and $\beta$ are in $V(C_i)$, then $C'_i \in
\{C_i,\pi(C_i),\pi(P_i)\cup P^{\dag}_i,P_i \cup \pi(P^{\dag}_i)\}$, where $P_i$ and
$P^{\dag}_i$ are the two paths in $C_i$ which have end vertices $\alpha$ and $\beta$.
\end{itemize}
\end{lemma}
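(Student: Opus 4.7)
My plan is to prove this cycle-switching lemma by building an auxiliary 2-regular multigraph $\G$ on the set $S := (\N_L(\a) \cup \N_L(\b)) \setminus ((\N_L(\a) \cap \N_L(\b)) \cup \{\a, \b\})$ whose components are even cycles alternating between two edge-types, reading off the required partition of $S$ from one alternation class in each component, and for each resulting pair $\{x, y\}$ realising $\P'$ by applying the transposition $\pi = (\a\b)$ to a single cycle (or $\a$-$\b$ sub-path of a cycle) of $\P$.

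I would begin by verifying that $|S|$ is even. Because $L$ is even, $|\N_L(\a)|$ and $|\N_L(\b)|$ are both even, so $|\N_L(\a) \triangle \N_L(\b)|$ is even, and the contribution of $\{\a, \b\}$ to this symmetric difference is either $0$ or $2$ (according as $\a\b \notin L$ or $\a\b \in L$). The central structural observation is the following dichotomy: for each $x \in S$, exactly one of $\a x, \b x$ lies in $L$ (its \emph{L-stub}) and the other lies in a unique cycle of $\P$ (its \emph{P-stub}); these two stubs will become the two $\G$-edges at $x$.

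The edges of $\G$ are built in two layers. The P-layer records, for each cycle $C \in \P$: if $V(C) \cap \{\a, \b\} = \{v\}$, the two $C$-neighbours of $v$ (whenever both lie in $S$) form a P-edge; if $V(C) \cap \{\a, \b\} = \{\a, \b\}$, then $C$ is a union of two $\a$-$\b$ paths $P, P^\dagger$, and from each path I add a P-edge joining the internal $\a$-neighbour to the internal $\b$-neighbour (when both are in $S$). The L-layer is defined by tracing closed alternating walks: starting at any $x_0 \in S$, I follow its L-stub to some $v \in \{\a, \b\}$, then an unused L-stub at $v$ to the next $S$-vertex, then its P-stub into the appropriate cycle, and so on. Finiteness and the parity of $|\N_L(\a) \cap S|, |\N_L(\b) \cap S|$ force each walk to close into an even alternating cycle, and iterating exhausts $S$; selecting the P-edges in each cycle of $\G$ gives the required perfect matching.

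For each matched pair $\{x, y\}$, the associated P-edge comes from a unique cycle $C \in \P$, and I obtain $\P'$ by replacing $C$ with $\pi(C)$ when $V(C) \cap \{\a, \b\}$ consists of a single vertex, or with $\pi(P) \cup P^\dagger$ when $V(C) \cap \{\a, \b\} = \{\a, \b\}$ and the pair arose from the path $P$. In each case the new cycle has the same length as $C$, is edge-disjoint from the remaining cycles of $\P$ (since $x, y \in S$ ensures the edges newly drawn into the cycle were in $L$), and differs from $C$ only in the edges $\{\a x, \b x, \a y, \b y\}$; consequently $L'$ differs from $L$ in exactly these four edges, and a direct inspection of the three possibilities for $V(C_i) \cap \{\a, \b\}$ produces conclusions (i)-(iii). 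The hard part will be guaranteeing that the alternating walks of the L-layer close consistently: one must carefully book-keep which L-stubs at $\a$ and $\b$ remain unused, and handle the case in which a single cycle $C$ containing both $\a$ and $\b$ contributes two P-edges that may be matched within a single $\G$-component or across different components. Verifying that every possible form of $C'_i$ listed in (iii) actually arises from such a walk, and that cycle lengths are preserved throughout, is the technical core of the argument.
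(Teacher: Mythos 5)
This lemma is imported verbatim from another paper (it is stated here with a citation and no proof), so there is no in-paper argument to compare yours against; judged on its own terms, your proposal has a genuine gap at its core. You assume that each switch can be realised by modifying a \emph{single} cycle of $\mathcal{P}$, and correspondingly that your P-edges (joining the two $C$-neighbours of $v\in\{\alpha,\beta\}$ ``whenever both lie in $S$'') cover every vertex of $S$. They do not. Consider edge-disjoint cycles $C_1=(\alpha,x_0,p,x_1)$ and $C_2=(\beta,x_1,q,x_2)$ with $\beta x_0,\alpha x_2\in E(L)$. Then $x_0,x_2\in S$ but $x_1\notin S$ (both $\alpha x_1$ and $\beta x_1$ are covered by the packing), so your rule creates no P-edge in either cycle: $x_0$ and $x_2$ are left unmatched, your graph $\mathcal{G}$ is not $2$-regular on $S$, and its components are not alternating cycles. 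The correct pair here is $\{x_0,x_2\}$, and realising it forces you to replace \emph{both} $C_1$ and $C_2$ by their images under $\pi$ simultaneously --- flipping $C_1$ alone would require the edge $\beta x_1$, which is occupied by $C_2$. This cascading is precisely why the lemma's conclusion allows every cycle of $\mathcal{P}$ to change, not just one. A second, related error: your parity claim that the walks close because $|\N_L(\alpha)\cap S|$ and $|\N_L(\beta)\cap S|$ are even is false; e.g.\ $\N_L(\alpha)=\{1,2\}$ and $\N_L(\beta)=\{2,3\}$ give $S=\{1,3\}$ with $|\N_L(\alpha)\cap S|=1$, and the correct pair $\{1,3\}$ joins a vertex whose leave-stub is at $\alpha$ to one whose leave-stub is at $\beta$, so the leave-stubs cannot be paired up at $\alpha$ and at $\beta$ separately.

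The standard proof instead pairs the two $S$-endpoints of each maximal alternating $(\alpha,\beta)$-chain: starting from $x\in S$, follow its packing-stub into a cycle at $v\in\{\alpha,\beta\}$, cross to the other neighbour $z$ of $v$ in that cycle (or, if the cycle contains both $\alpha$ and $\beta$, to the other internal endpoint of the relevant $\alpha$--$\beta$ path), and whenever $z\notin S$ continue through $z$'s other stub into the next cycle; the switch then replaces every cycle met by the chain as in (i)--(iii), and every intermediate vertex $z$ has both $\alpha z$ and $\beta z$ covered by the packing before and after, so the leave changes only at the two endpoints. Your walk idea contains the germ of this, but the pairing must be read off the chain endpoints rather than off a within-one-cycle edge class, and the repacking must modify all cycles on the chain, not one.
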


Note that $\mathcal{P}'$ is a repacking of $\mathcal{P}$ in the above. When we apply Lemma~\ref{Lemma:CycleSwitch} we say that we are performing the \emph{$(\a,\b)$-switch with origin $x$ and terminus $y$} (equivalently, with origin $y$ and terminus $x$).

\section{Merging two cycle lengths}\label{Section:JoiningLemma}

The aim of this section is to prove Lemma~\ref{Lemma:GeneralJoining}. The work required to prove the $\mu=0$ case of this lemma has been done in \cite{Horsley12}.

\begin{lemma}\label{Lemma:BipartiteJoining}
Let $u$ and $w$ be positive integers such that $u$ is odd and $w$ is even, and let $M$ be a list of integers. Suppose there exists an $(M)$-packing $\P$ of $K_{u+w}-K_u$ with a reduced leave that has no pure edges, and has a decomposition into an $h$-cycle, an $m$-cycle and an $m'$-cycle. If $m+m'\leq 3h$ and $m+m'+h\leq 2\min (u+1, w+1)$, then there exists a repacking of $\P$ whose reduced leave has a decomposition into an $h$-cycle and an $(m+m')$-cycle.
\end{lemma}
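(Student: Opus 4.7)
Since the reduced leave $L$ contains no pure edges, every edge of $L$ has one endpoint in $U$ and one in $W$. Hence the three cycles $C_h$, $C_m$, and $C_{m'}$ in $L$ are bipartite, alternating between $U$ and $W$; in particular $h$, $m$, and $m'$ are all even, and together the cycles use $(h+m+m')/2$ vertices in each of $U$ and $W$. The aim is to use Lemma~\ref{Lemma:CycleSwitch} to modify $\P$ so that the reduced leave of the new packing decomposes as $C_h \cup C^{\ast}$, where $C^{\ast}$ is an $(m+m')$-cycle.

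The plan is to invoke the corresponding merging result for packings of a complete bipartite graph established in \cite{Horsley12}. The reduction is direct: because $L$ begins with zero pure edges and Lemma~\ref{Lemma:CycleSwitch} applied to a pair of vertices twin in $K_{U\cup W}-K_U$ preserves the counts of pure and cross edges in the leave, any switch used (necessarily twin vertices in $U$ or twin vertices in $W$) keeps the leave bipartite throughout. The cycles of $\P$ that do not contribute to the reduced leave play no role in the argument beyond the partitions guaranteed by Lemma~\ref{Lemma:CycleSwitch}, so any sequence of twin switches that works in the bipartite setting works here with the same effect on the cross edges of $L$.

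To see why merging is possible in principle, consider twin vertices $\a \in V(C_m)$ and $\b \in V(C_{m'})$ on the same side of the bipartition and perform the $(\a,\b)$-switch. Lemma~\ref{Lemma:CycleSwitch} yields a partition of the four vertices in $\N_L(\a) \cup \N_L(\b)$ (assuming these neighbourhoods are disjoint) into two pairs. Of the three possible partitions, two produce the desired $(m+m')$-cycle while leaving $C_h$ untouched; the remaining partition, if forced, requires a preparatory switch using a vertex outside the three cycles. The hypothesis $m+m'+h \leq 2\min(u+1,w+1)$ ensures that enough such auxiliary vertices exist in both $U$ and $W$, while $m+m' \leq 3h$ controls the relative sizes well enough that the resulting $(m+m')$-cycle can be assembled from pieces of $C_m$, $C_{m'}$, and at most a bounded amount of rerouting.

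The main obstacle is making this case analysis robust: handling configurations where $\N_L(\a)$ and $\N_L(\b)$ overlap, where two of the three cycles share vertices, or where a preparatory switch itself runs into conflict. These details are precisely the content of the bipartite joining lemma of \cite{Horsley12}, and since every switch used there is a twin switch within one side of the bipartition (each of which transfers verbatim to our setting by the remark above), appealing to that result completes the proof.
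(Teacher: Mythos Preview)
Your proposal is correct and takes essentially the same approach as the paper: both reduce the lemma to \cite[Lemma 3.6]{Horsley12} by observing that the leave is a subgraph of $K_{U,W}$ and that every switch in the bipartite proof is a twin switch, so Lemma~\ref{Lemma:CycleSwitch} can substitute for the bipartite switching lemma of \cite{Horsley12} verbatim. The paper also remarks that $u \neq w$ (since $u$ is odd and $w$ is even), which is needed to match the hypothesis $m_{\tau-1}+m_\tau \leq 2\min(u,w)+2$ in the bipartite result; you might make this explicit.
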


\begin{proof}
This lemma is very similar to Lemma 3.6 from \cite{Horsley12}, but it applies to packings of $K_{u+w}-K_u$ rather than to packings of $K_{u,w}$ (the leaves of the packings are still subgraphs of $K_{u,w}$). It can be proved exactly as per the proof of \cite[Lemma 3.6]{Horsley12}, except that Lemma~\ref{Lemma:CycleSwitch} is used in place of \cite[Lemma 2.1]{Horsley12}. Note that two vertices of $K_{u+w}-K_u$ are twin if and only if they are both in the hole or both outside it. Also note that $u \neq w$ because $u$ is odd and $w$ is even.
\end{proof}

So it remains to prove Lemma~\ref{Lemma:GeneralJoining} in the cases $\mu=1$ and $\mu=2$. To help with this task we first prove two useful lemmas and then introduce some more notation.

\begin{lemma}\label{Lemma:EqualiseOneStep}
Let $G$ be an even graph, let $y$ and $z$ be twin vertices in $G$, and let $\P$ be an $(M)$-packing of $G$ with (unreduced) leave $L$. If $\deg_L(y)>\deg_L(z)$, then there is an $(M)$-packing $\P'$ of $G$ with an (unreduced) leave $L'$ such that $\deg_{L'}(y)=\deg_L(y)-2$, $\deg_{L'}(z)=\deg_L(z)+2$, and $\deg_{L'}(x)=\deg_L(x)$ for each $x \in V(G) \setminus \{y,z\}$. Furthermore the number of nontrivial components in $L'$ is at most one greater than the number of nontrivial components in $L$.
\end{lemma}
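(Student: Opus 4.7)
The plan is to apply a suitable $(y,z)$-switch—given by Lemma~\ref{Lemma:CycleSwitch} or the natural extension of its argument to twin vertices in a general even graph—to shift two units of degree in the leave from $y$ to $z$, and then to control how much the number of nontrivial components of the leave can grow.

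Write $A = \N_L(y) \setminus (\N_L(z)\cup\{y,z\})$ and $B = \N_L(z) \setminus (\N_L(y)\cup\{y,z\})$. The twin property makes the symmetric-difference set partitioned by Lemma~\ref{Lemma:CycleSwitch} exactly $A\cup B$, and $\deg_L(y)-\deg_L(z)=|A|-|B|$. Since $L$ is the leave of a cycle packing of an even graph, both $\deg_L(y)$ and $\deg_L(z)$ are even, so $|A|-|B|\geq 2$. If $k$, $\ell$, and $m$ denote respectively the numbers of pairs of the partition lying in $A$, in $B$, and straddling the two, then $2k+m=|A|$ and $2\ell+m=|B|$, whence $k-\ell=(|A|-|B|)/2\geq 1$. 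Picking a pair $\{x_1,x_2\}\subseteq A$ and performing the $(y,z)$-switch with origin $x_1$ and terminus $x_2$ yields an $(M)$-packing $\P'$ whose leave $L'$ differs from $L$ exactly by deleting the edges $yx_1,yx_2$ and inserting $zx_1,zx_2$. In particular $\deg_{L'}(y)=\deg_L(y)-2$, $\deg_{L'}(z)=\deg_L(z)+2$, and every other degree is preserved.

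For the component bound, set $J=L+zx_1+zx_2=L'+yx_1+yx_2$, so $J$ contains the $4$-cycle $(y,x_1,z,x_2)$. The vertices $x_1$ and $x_2$ are non-isolated in $L$ (each is adjacent to $y$), so adding the edges $zx_1$ and $zx_2$ to $L$ to form $J$ cannot create a new nontrivial component out of two trivial pieces; each insertion weakly decreases the number of nontrivial components. Hence $J$ has at most as many nontrivial components as $L$. Conversely, since $yx_1$ lies on the $4$-cycle in $J$ it is not a bridge, so deleting it does not change the component structure of $J$; deleting the remaining edge $yx_2$ then increases the number of nontrivial components by at most one. Combining the two estimates, the number of nontrivial components of $L'$ exceeds that of $L$ by at most one, as required.

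The main subtlety is that Lemma~\ref{Lemma:CycleSwitch} is stated for packings of $K_{u+w}-K_u$, whereas here $G$ is an arbitrary even graph. The construction underlying that switching lemma, however, uses only the fact that $\alpha$ and $\beta$ share all neighbours other than possibly each other, not any further structure of the host graph, so the same argument produces the required partition of $A\cup B$ and the associated repackings in our more general setting.
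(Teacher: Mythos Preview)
Your proof is correct and follows the same approach as the paper: apply a $(y,z)$-switch with both origin and terminus adjacent to $y$ in $L$, then analyse how $L'$ differs from $L$. The paper's own proof is a two-line sketch that leaves the existence of such a switch and the component bound to the reader; you have filled in both carefully, including the parity/pigeonhole argument forcing a pair in $A$ and the $4$-cycle argument bounding component growth, and you rightly note that the switching lemma extends verbatim to twin vertices in any even graph.
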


\begin{proof}
Let $\P'$ be the repacking of $\P$ obtained by applying a $(y,z)$-switch whose origin and terminus are both adjacent to $y$ in $L$. Such a switch exists because $\deg_L(y)>\deg_L(z)$. Examine the differences between $L'$ and $L$.
\end{proof}

\begin{lemma}\label{Lemma:Isolates}
Let $U$ and $W$ be disjoint sets with $|U|$ odd and $|W|$ even, and suppose that $L$ is a subgraph of $K_{U\cup W}-K_U$ such that $L$ contains exactly $\mu$ pure edges, where $\mu\in\{1,2\}$, and each vertex of $L$ has positive even degree.

\begin{itemize}
	\item[(i)]
If $|E(L)| \leq 2(|U|+1)$ and $U$ contains a vertex of degree at least $4$ in $L$, then there is a vertex $x$ in $U$ such that $x \notin V(L)$.
	\item[(ii)]
If $|E(L)|\leq \min (2(|U|+2),2|W|+1)$ and $S$ is an element of $ \{U,W\}$ such that $S$ contains either at least two vertices of degree $4$ in $L$ or at least one vertex of degree at least $6$ in $L$, then there is a vertex $x$ in $S$ such that  $x \notin V(L)$.
	\item[(iii)]
If $|E(L)|\leq \min (2(|U|+2),2|W|+1,|U|+|W|)$ and $L$ contains either at least two vertices of degree $4$ or at least one vertex of degree at least $6$, then there are twin vertices $x$ and $y$ in $K_{U\cup W}-K_U$ such that $\deg_L(x) \geq 4$ and $y \notin V(L)$.
\end{itemize}
\end{lemma}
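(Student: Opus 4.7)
The plan is to prove all three parts by a degree-counting argument within the bipartition $(U,W)$. First I would record the key accounting identity: if $L$ has $e$ cross edges and $\mu$ pure edges, then the sum of $L$-degrees over $U$ equals $e$ and the sum over $W$ equals $e+2\mu$. Any lower bound on a degree sum in a part therefore yields a lower bound on $|E(L)|=e+\mu$, which is then to be compared with the hypothesized upper bound.

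For (i), I would argue by contradiction. If every vertex of $U$ lay in $V(L)$, then each would have even degree at least $2$ and one would have degree at least $4$, forcing $e\geq 2|U|+2$ and hence $|E(L)|\geq 2|U|+2+\mu\geq 2|U|+3$, contradicting $|E(L)|\leq 2(|U|+1)$. For (ii) I would run essentially the same calculation but with a boosted baseline: having either two degree-$4$ vertices or one degree-$\geq 6$ vertex in $S$ pushes the corresponding degree sum from at least $2|S|$ up to at least $2|S|+4$. When $S=U$ this gives $|E(L)|\geq 2|U|+5$; when $S=W$, after subtracting the $2\mu$ contribution of pure edges to the degree sum over $W$, it gives $|E(L)|\geq 2|W|+4-\mu\geq 2|W|+2$. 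Each of these violates the relevant hypothesis.

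For (iii), I would first reduce to the subcases not already covered by (ii). If some vertex has degree at least $6$, it lies in one of $U$ or $W$ and (ii) applied to that part supplies a missing twin. If two degree-$4$ vertices both lie in the same part, (ii) again applies. The only remaining case is a degree-$4$ vertex $x_1\in U$ and a degree-$4$ vertex $x_2\in W$. Assuming $U\cup W\subseteq V(L)$, the degree-sum bounds $\sum_{x\in U}\deg_L(x)\geq 2|U|+2$ and $\sum_{x\in W}\deg_L(x)\geq 2|W|+2$ combine to give $|E(L)|\geq |U|+|W|+2$, contradicting $|E(L)|\leq |U|+|W|$. Hence some part $S$ contains a vertex $y\notin V(L)$, and the degree-$4$ vertex in $S$ (namely $x_1$ if $S=U$ and $x_2$ if $S=W$) is twin with $y$ in $K_{U\cup W}-K_U$.

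The main, albeit mild, obstacle is the bookkeeping in this split-witness subcase of (iii): one must check that the missing vertex produced by the combined degree count can always be paired with a degree-$4$ vertex in its own part. This is automatic precisely because each of $U$ and $W$ already contains a degree-$4$ vertex here, which also explains why the third bound $|E(L)|\leq |U|+|W|$ appears in the hypothesis of (iii) but not in (ii).
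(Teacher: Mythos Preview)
Your proposal is correct and follows essentially the same degree-counting argument as the paper: both compute the degree sums over $U$ and $W$ in terms of $|E(L)|$ and $\mu$, then derive a contradiction from the assumed upper bound on $|E(L)|$. The only cosmetic differences are that the paper outsources the case $\mu=2$ to a cited lemma and, in the residual subcase of (iii), works with the exact equalities $\sum_{x\in U}\deg_L(x)=2|U|+2$ and $\sum_{x\in W}\deg_L(x)=2|W|+2$ rather than inequalities; your uniform treatment of $\mu\in\{1,2\}$ and use of inequalities is slightly cleaner but logically identical.
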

\begin{proof} Let $\ell=|E(L)|$ and note that $\ell\equiv \mu \mod{2}$. If $\mu =2$ then the result follows by \cite[Lemma 10]{DWcurrent}. So suppose that $\mu=1$. Then we have
\begin{align*}
  \medop\sum_{x \in V(L) \cap U}\deg_L(x) &= \ell-1, \mbox{ and} \\
  \medop\sum_{x \in V(L) \cap W}\deg_L(x) &= \ell+1.
\end{align*}

\noindent {\bf Proof of (i).}\quad
Suppose that $\ell\leq 2(|U|+1)$ and $U$ contains a vertex of degree at least $4$ in $L$. Then $\ell\leq 2|U|+1$ since $\ell$ is odd. If $U\subseteq V(L)$ then $\ell-1=\sum_{x \in V(L) \cap U}\deg_L(x) \geq 2|U|+2$ which contradicts $\ell\leq 2|U|+1$.

\noindent {\bf Proof of (ii).}\quad
Suppose that $\ell\leq \min (2(|U|+2),2|W|+1, |U|+|W|)$ and $S$  is an element of $ \{U,W\}$ such that $S$ contains either at least two vertices of degree 4 in $L$ or at least one vertex of degree at least 6 in $L$. Suppose for a contradiction that $S\subseteq V(L)$. Then we have $\sum_{x\in V(L)\cap S}\deg_L(x)\geq 2|S|+4$. So, if $S=U$, then $\ell-1\geq 2|U|+4$, contradicting $\ell\leq 2|U|+3$. If $S=W$, then $\ell+1\geq 2|W|+4$, contradicting $\ell\leq 2|W|+1$.

\noindent {\bf Proof of (iii).}\quad
Because we have proved (ii), it only remains to show that if $L$ contains two vertices of degree $4$, one in $U$ and one in $W$, and every other vertex of $L$ has degree $2$, then there are twin vertices $x$ and $y$ in $K_{U\cup W}- K_U$ such that $\deg_L(x)\geq 4$ and $y \notin V(L)$. Suppose otherwise. Then it must be the case that $V(L) = U\cup W$, $\ell-1=2|U|+2$ and $\ell+1=2|W|+2$. But then $\ell=2|U|+3$ and $\ell=2|W|+1$, so $|U\cup W|=2|U|+1$ and $\ell=|U|+|W|+2$, contradicting $\ell\leq |U|+|W|$.
\end{proof}

\begin{definition}
An $(a_1,a_2,\ldots,a_s)$-chain (or $s$-chain) is the edge-disjoint union of $s\geq 2$ cycles $A_1,A_2,\ldots, A_s$ such that
\begin{itemize}
\item $A_i$ is a cycle of length $a_i$ for $1\leq i\leq s$; and
\item for $1\leq i<j\leq s$, $|V(A_i)\cap V(A_j)|=1$ if $j=i+1$ and $|V(A_i)\cap V(A_j)|=0$ otherwise.
\end{itemize}
\end{definition}
We call $A_1$ and $A_s$ the \emph{end cycles} of the chain, and for $1<i<s$ we call $A_i$ an \emph{internal cycle} of the chain. A vertex which is in two cycles of the chain is said to be the \emph{link vertex} of those cycles. We denote a $2$-chain with cycles $A_1$ and $A_2$ by $A_1 \cdot A_2$.

\begin{definition}
An $(a_1,a_2,\ldots,a_s)$-ring (or $s$-ring) is the edge-disjoint union of $s\geq 2$ cycles $A_1,A_2,\ldots, A_s$ such that
\begin{itemize}
\item $A_i$ is a cycle of length $a_i$ for $1\leq i\leq s$;
\item for $s \geq 3$ and $1\leq i<j\leq s$, $|V(A_i)\cap V(A_j)|=1$ if $j=i+1$ or if $(i,j)=(1,s)$, and $|V(A_i)\cap V(A_j)|=0$ otherwise; and
\item if $s=2$ then $|V(A_1)\cap V(A_2)|=2$.
\end{itemize}
\end{definition}
We refer to the cycles $A_1,A_2,\ldots,A_s$ as the \emph{ring cycles} of the ring in order to distinguish them from the other cycles that can be found within the graph. A vertex which is in two ring cycles of the ring is said to be a \emph{link vertex} of those cycles.

\begin{definition}
For disjoint sets of vertices $U$ and $W$, an $s$-chain that is a subgraph of $K_{U\cup W}-K_U$ is  \emph{good} if either $s=2$ or the following conditions hold:
\begin{itemize}
\item an end cycle of the chain has its link vertex in $W$ and contains at least one pure edge; and
\item each internal cycle of the chain has one link vertex in $W$ and one link vertex in $U$.
\end{itemize}
\end{definition}

\begin{definition} For disjoint sets of vertices $U$ and $W$, an $s$-ring that is a subgraph of $K_{U\cup W}-K_U$ is \emph{good} if either $s=2$ or one of the following holds:
\begin{itemize}
	\item
$s \geq 4$ is even, and each of the ring cycles has one link vertex in $U$ and one link vertex in $W$; or
	\item
$s \geq 3$ is odd, one ring cycle has both link vertices in $W$ and contains a pure edge, and each other ring cycle has one link vertex in $U$ and one link vertex in $W$.
\end{itemize}
\end{definition}

Much of the work for the $\mu=2$ case of Lemma~\ref{Lemma:GeneralJoining} has also already been done in the form of the following lemma from \cite{DWcurrent}.

\begin{lemma}[{\cite[Lemma 14]{DWcurrent}}]\label{Lemma:DW1Degree4Vertex}
Let $U$ and $W$ be disjoint sets with $|U|$ odd and $|W|$ even, and let $M$ be a list of integers. Let $m$, $m'$, $k$ and $t$ be positive integers such that $m$ and $m'$ are odd, $m, m' \geq \max(k+t-1,3)$, $m+m'\leq\min(2|U|+4, 2|W|)$, and $m+m'\leq 2(|U|+1)$ if $3\in\{m,m'\}$. Suppose there exists an $(M)$-packing $\P$ of $K_{U\cup W}-K_U$ with a reduced leave $L$ of size $m+m'$ such that $L$ contains exactly two pure edges and $L$ has exactly $k$ components, $k-1$ of which are cycles and one of which is a good $t$-chain that, if $3\in\{m,m'\}$, is not a $2$-chain with link vertex in $U$. Then there exists a repacking of $\P$ whose reduced leave is the edge-disjoint union of an $m$-cycle and an $m'$-cycle.
\end{lemma}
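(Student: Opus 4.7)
The plan is to reduce the leave $L$ to two disjoint odd cycles of lengths $m$ and $m'$ through a sequence of cycle switches from Lemma~\ref{Lemma:CycleSwitch}. Because a repacking preserves the number of pure and cross edges, and because an odd cycle in $K_{U\cup W}-K_U$ contains an odd number of pure edges, the two resulting odd cycles must each contain exactly one of the two pure edges of $L$; this distribution is forced by parity. The overall objective is thus simply to perform switches that reduce the component count of $L$ to two while delivering components of sizes $m$ and $m'$.

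First I would absorb the $k-1$ disjoint cycle components into the good chain one at a time, each absorption reducing $k$ by one and increasing $t$ by one---leaving $k+t$ unchanged and thus preserving the length hypothesis $m,m'\geq k+t-1$. To absorb a disjoint cycle $D$ into the chain, I would pick a vertex $v$ on an appropriate end cycle of the chain and an isolated twin $v'$ of $v$ (furnished by Lemma~\ref{Lemma:Isolates}, which is where the size bounds $m+m'\leq\min(2|U|+4,2|W|)$ enter) whose role is to splice $D$ onto the end of the chain. A $(v,v')$-switch with origin and terminus chosen among the appropriate neighbors attaches $D$ to the chain as a new end cycle. Care must be taken to ensure the enlarged chain remains good: the new link vertex must lie in the correct part, and the pure-edge end cycle must be preserved or recreated on the required side.

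Once $k=1$, only the good $t$-chain remains. I would then merge consecutive chain cycles to reduce $t$. Choose an internal link vertex $v$, and by Lemma~\ref{Lemma:Isolates} select a twin $v'$ missing from $V(L)$, which exists because $v$ has degree $4$ in $L$. A $(v,v')$-switch whose origin and terminus are the two neighbors of $v$ in one of the adjacent chain cycles removes those two edges from $v$ and installs them at $v'$, merging the two neighboring chain cycles into one long cycle through $v'$. The chain length drops to $t-1$, and goodness of the new chain follows from that of the old provided the link-vertex parts were chosen appropriately on each side.

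The base case is $k=1,t=2$: the leave is a good $2$-chain $A_1\cdot A_2$ with link vertex $v$ and $|A_1|+|A_2|=m+m'$. If $\{|A_1|,|A_2|\}=\{m,m'\}$, a single switch at $v$ with an isolated twin separates $v$ into two degree-$2$ vertices in separate cycles, finishing the proof. Otherwise length-adjusting switches on pairs of twin vertices inside $A_1\cup A_2$---using part (iii) of Lemma~\ref{Lemma:CycleSwitch} to reroute a single cycle through two internal twins---transfer edges between $A_1$ and $A_2$ until the sizes match $\{m,m'\}$, after which the final split is performed. The main obstacle is the tight case $3\in\{m,m'\}$: a $3$-cycle in $K_{U\cup W}-K_U$ must consist of one pure edge together with two cross edges to a single $U$-vertex, so extracting such a cycle at the final step is very constrained. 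The additional hypotheses $m+m'\leq 2(|U|+1)$ and the prohibition on a $2$-chain with link vertex in $U$ are precisely what make this last switch possible, by ensuring that a suitable $U$-vertex is available as the off-chain terminus of the decisive switch.
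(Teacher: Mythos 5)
This lemma is not proved in the paper at all: it is quoted verbatim from \cite{DWcurrent} (Lemma 14 there), and the paper instead develops the parallel machinery for leaves with \emph{one} pure edge (Lemmas~\ref{Lemma:PathToCycles}--\ref{Lemma:1Degree4Vertex}), which reveals the intended architecture. Your high-level outline --- absorb the $k-1$ cycle components into the chain, then shorten the chain, then split a $2$-chain --- does match that architecture, and your parity observation that each odd cycle must receive exactly one of the two pure edges is correct. But the switch mechanics you propose for both reduction steps would fail. To absorb a disjoint cycle $D$ into the chain one switches on a vertex of the end cycle and a vertex of $D$, both lying in $V(L)$; a switch between a chain vertex $v$ and an \emph{isolated} twin $v'$ only toggles edges inside $\N_L(v)$, so it reroutes part of $v$'s own component through $v'$ and can never join two components (compare Case 1 of the proof of Lemma~\ref{Lemma:1Degree4Vertex}, where $x\in V(H_t)$ and $y\in V(C)$). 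Isolated twins from Lemma~\ref{Lemma:Isolates} serve the opposite purpose: breaking a ring open, or lowering a degree as in Lemma~\ref{Lemma:EqualiseOneStep}.

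Your chain-shortening step is wrong in two ways. The operation you describe --- a $(v,v')$-switch at an internal link vertex $v$ with isolated twin $v'$, origin and terminus the two neighbours of $v$ in one adjacent chain cycle --- detaches that cycle from the chain (it becomes a cycle through $v'$, vertex-disjoint from the rest), so it disconnects rather than merges. If you instead aimed for origin in one adjacent cycle and terminus in the other, Lemma~\ref{Lemma:CycleSwitch} only lets you choose the origin; the terminus is whatever the partition pairs it with, so the favourable outcome cannot be forced. The actual proof lives with this non-determinism: it decomposes the chain into an $m$-path and an $m'$-path with twin end vertices (Lemma~\ref{Lemma:2Paths}), switches the end vertices, and accepts that the bad branch yields a good $(s-1)$-ring, which is then converted back to a chain by a switch with an isolated vertex --- hence the chain/ring double induction of Lemma~\ref{Lemma:2Cycles}. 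Finally, the $2$-chain base case that you dismiss as ``length-adjusting switches until the sizes match'' is where most of the technical work sits (Lemmas~\ref{Lemma:DWFigureOfEight3}, \ref{Lemma:PathToCycles} and \ref{Lemma:2ChainCycles}): one must track the position of the pure edge relative to the intended cut so that the two pure edges land in different cycles, and each switch again branches into ``done'' or ``made controlled progress''. As written, your argument does not establish the lemma.
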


In order to prove Lemma~\ref{Lemma:GeneralJoining} we will require Lemma \ref{Lemma:1Degree4Vertex} which is an analogue of Lemma \ref{Lemma:DW1Degree4Vertex} for packings whose leaves have one pure edge. This will be our goal in Subsection~\ref{Subsection:Chains}.

\subsection{Proof of Lemma \ref{Lemma:1Degree4Vertex}}\label{Subsection:Chains}

The proof of Lemma~\ref{Lemma:1Degree4Vertex} proceeds as follows. Lemmas~\ref{Lemma:DWFigureOfEight3} and \ref{Lemma:PathToCycles} are used in proving Lemma~\ref{Lemma:2ChainCycles}, which gives conditions under which we can repack to transform a $2$-chain leave into a union of two cycles of specified lengths. Lemma~\ref{Lemma:2ChainCycles} then acts as a base case and is used, along with Lemmas~\ref{Lemma:ReducePathLength} and \ref{Lemma:2Paths}, in an induction proof of Lemma~\ref{Lemma:2Cycles}. Lemma~\ref{Lemma:2Cycles} gives conditions under which we can repack to transform a good $s$-chain or $s$-ring leave into a union of two cycles of specified lengths. Finally Lemma~\ref{Lemma:1Degree4Vertex} is proved from Lemma~\ref{Lemma:2Cycles}.

\begin{lemma}[{\cite[Lemma 5]{DWcurrent}}]\label{Lemma:DWFigureOfEight3}
Let $G$ be a graph and let $M$ be a list of integers. Let $m$, $p$ and $q$ be positive integers with $m$ odd, $m\geq p$ and $p+q-m\geq 3$. Suppose there exists an $(M)$-packing $\mathcal{P}$ of $G$ whose reduced leave is the $(p,q)$-chain $(x_1,x_2,\ldots,x_{p-1},c) \cdot (c,y_1,y_2,\ldots,y_{q-1})$ and such that either
\begin{itemize}
    \item[(i)]
$p$ is odd, $x_1, y_3, y_5,\ldots, y_{m-p+1}$ are pairwise twin in $G$ and $y_2, y_4,\ldots, y_{m-p+2}$ are pairwise twin in $G$; or
    \item[(ii)]
$p$ is even, $x_1,x_3,\ldots, x_{p-3}$ are pairwise twin in $G$ and $y_{m-p+2},x_2,x_4,\ldots,x_{p-2}$ are pairwise twin in $G$.
\end{itemize}
Then there exists a repacking of $\mathcal{P}$ whose reduced leave is the edge-disjoint union of an $m$-cycle and a $(p+q-m)$-cycle.
\end{lemma}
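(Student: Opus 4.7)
The plan is to apply Lemma~\ref{Lemma:CycleSwitch} iteratively at the twin pairs supplied by the hypothesis, transforming the $(p,q)$-chain into the desired edge-disjoint union of an $m$-cycle and a $(p+q-m)$-cycle. Since $m+(p+q-m)=p+q$ is exactly one more than the number of vertices of the chain, the final configuration must itself be a chain, now of cycle lengths $m$ and $p+q-m$. I would set up the proof as an induction on $m-p$.

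For case~(i), the base $m=p$ is immediate: both sequences $y_3,y_5,\ldots,y_{m-p+1}$ and $y_2,y_4,\ldots,y_{m-p+2}$ are either empty or a singleton, so the twin hypotheses are vacuous, and the $(p,q)$-chain itself is the edge-disjoint union of a $p$-cycle and a $q=(p+q-m)$-cycle. For the inductive step, note $m-p$ is even since $m$ and $p$ are both odd, and perform two successive switches: an $(x_1,y_{m-p+1})$-switch followed by a $(y_{m-p},y_{m-p+2})$-switch. Each switch is chosen (by picking the appropriate origin and terminus from among the four leaf-neighbors of the twin pair) to effect a specific 4-edge rotation in the leave. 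The intended combined effect is to replace the two chain cycles of lengths $p$ and $q$ by cycles of lengths $p+2$ and $q-2$, producing a new chain that, after relabeling, satisfies the hypotheses with $p$ replaced by $p+2$ and the same $m$, so the inductive hypothesis finishes the argument. Case~(ii) is handled by the parallel construction, with the $x$-twin pairs $(x_1,x_3,\ldots)$ and $(y_{m-p+2},x_2,x_4,\ldots)$ playing the role that the mixed $x$/$y$-twin pairs play in case~(i).

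The main obstacle is verifying that Lemma~\ref{Lemma:CycleSwitch} supplies the specific switches we need. That lemma only provides \emph{some} partition of the set $(\N_L(\a)\cup\N_L(\b))\setminus((\N_L(\a)\cap\N_L(\b))\cup\{\a,\b\})$ into pairs; we must identify a pair of the partition whose corresponding 4-edge toggle realizes the rotation we want. Because each vertex used in a switch (namely $x_1$, $y_{m-p+1}$, $y_{m-p}$, $y_{m-p+2}$) has degree $2$ in the current leave (only the link vertex $c$ of the chain has degree $4$), the symmetric difference has at most four elements and therefore at most three possible partitions into pairs. A direct case analysis should show that a suitable pair always appears, and — if the only available partition is undesirable — that applying a further switch at the same twin pair restores the situation. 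The bulk of the technical work lies in this bookkeeping and in checking that the twin classes needed for the next step of the induction are preserved by the switches already performed.
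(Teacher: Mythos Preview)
The paper does not prove this lemma --- it is quoted verbatim from \cite[Lemma~5]{DWcurrent} with no argument supplied --- so there is no in-paper proof to compare against.

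On its own merits: your overall scheme (induct on $m-p$, apply Lemma~\ref{Lemma:CycleSwitch} at the supplied twin pairs) is the right framework and is presumably what \cite{DWcurrent} does. But the concrete plan has a genuine gap, and it is precisely the one you flag but do not close. Lemma~\ref{Lemma:CycleSwitch} lets you choose the \emph{origin} of a switch but not the terminus; the terminus is whichever vertex the lemma's partition pairs with your chosen origin. You therefore cannot simply ``effect a specific 4-edge rotation''. The substance of the proof is the case analysis on the possible termini, showing that each one either yields the target $m$-cycle and $(p{+}q{-}m)$-cycle outright, or produces a new chain to which the inductive hypothesis still applies. You defer this to ``a direct case analysis should show\ldots'', but that analysis \emph{is} the proof. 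It is also less routine than you suggest: an unfavourable terminus can move you from case~(i) to case~(ii) (or back) with altered parameters. For instance, a single $(x_1,y_{m-p+1})$-switch with origin $y_{m-p}$ in case~(i) gives the desired decomposition for two of the three possible termini, but the third terminus ($x_2$) yields a chain satisfying the hypotheses of case~(ii) with first cycle of length $m-p+2$ rather than a $(p{+}2,q{-}2)$-chain. Setting up an inductive measure that decreases across both cases, and verifying the twin hypotheses survive each switch, is where the real work lies.
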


\begin{lemma}\label{Lemma:PathToCycles}
Let $U$ and $W$ be disjoint sets with $|U|$ odd and $|W|$ even, and let $M$ be a list of integers.
Let $m$, $p$ and $q$ be  positive integers with $m$ odd and $m, p+q-m\geq 3$. Suppose there exists an $(M)$-packing $\P$ of $K_{U\cup W}-K_U$ whose reduced leave $L$ is a $(p,q)$-chain $(x_1,x_2,\ldots,x_{p-1},x_0)\cdot (x_0,y_1,y_2,\ldots,y_{q-1})$ such that $L$ contains exactly one pure edge, namely $x_rx_{r+1}$ for some $r \in \{0,\ldots,p-1\}$ (subscripts modulo $p$). If $p\leq m$, or if $p\geq m+2$ and $r\leq m-3$, then there exists a repacking of $\P$ whose reduced leave is the edge-disjoint union of an $m$-cycle and a $(p+q-m)$-cycle.
\end{lemma}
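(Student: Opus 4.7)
The plan is to split the proof into two cases matching the hypothesis and to reduce each to Lemma~\ref{Lemma:DWFigureOfEight3}. Before the case split, I would extract the bipartite-like structure forced by the single pure edge: since the only pure edge $x_rx_{r+1}$ lies in the $p$-cycle, the $q$-cycle consists entirely of cross edges, forcing $q$ to be even and placing $y_i$ in $x_0$'s class precisely when $i$ is even. The $p$-cycle alternates in class except across the pure edge, forcing $p$ to be odd. These parities match the hypotheses of case (i) of Lemma~\ref{Lemma:DWFigureOfEight3}.

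For $p\leq m$, I would apply Lemma~\ref{Lemma:DWFigureOfEight3}(i) with $c=x_0$, so only the twin conditions need checking. At most one edge at $x_0$ in the $p$-cycle can be pure (only when $r\in\{0,p-1\}$), so at least one of the two neighbors $x_1,x_{p-1}$ of $x_0$ in the $p$-cycle is joined to $x_0$ by a cross edge; after possibly reversing the orientation of the $p$-cycle, I would assign that neighbor to play the role of ``$x_1$'' in Lemma~\ref{Lemma:DWFigureOfEight3}. It then lies in the class opposite $x_0$, and by the alternation of the $q$-cycle so do $y_3,y_5,\ldots,y_{m-p+1}$, making them pairwise twin. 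Likewise, $y_2,y_4,\ldots,y_{m-p+2}$ all lie in $x_0$'s class. Both twin conditions hold, and Lemma~\ref{Lemma:DWFigureOfEight3}(i) delivers the required repacking.

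For $p\geq m+2$ and $r\leq m-3$, the bound $p>m$ rules out a direct application of Lemma~\ref{Lemma:DWFigureOfEight3}. My plan is to iteratively reduce to the previous case by applying Lemma~\ref{Lemma:CycleSwitch} to shorten the $p$-cycle by two and lengthen the $q$-cycle by two, preserving the chain structure and keeping the pure edge at position $r$ in the new $p$-cycle. Each step selects twin vertices $\alpha$ on the $p$-cycle far from the pure edge and $\beta$ either on the $q$-cycle in $\alpha$'s class or outside $V(L)$, and performs an $(\alpha,\beta)$-switch whose origin and terminus are chosen so that the resulting leave is a $(p-2,q+2)$-chain whose unique pure edge is still $x_rx_{r+1}$. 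The hypothesis $r\leq m-3$ ensures that the portion of the $p$-cycle away from the pure edge is long enough for such a reduction without ever touching the pure edge, and after $(p-m)/2$ reductions the chain satisfies $p'\leq m$, whereupon the previous case finishes the argument.

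The main obstacle will be producing the required $(\alpha,\beta)$-switch: I would have to examine the partition of the asymmetric neighborhood delivered by Lemma~\ref{Lemma:CycleSwitch} to guarantee that one of its pairs has the precise effect of moving two edges from the $p$-cycle to the $q$-cycle through $x_0$, that no second pure edge is introduced, and that the chain does not fragment into additional components. Choosing $\alpha$ and $\beta$ carefully from the ``far'' side of the $p$-cycle and from the $q$-cycle (or from outside $V(L)$), while tracking the effect of the switch on the chain's link vertex and on the set of pure edges, will be the delicate part.
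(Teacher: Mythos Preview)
Your Case~1 is correct and is exactly the paper's argument: after relabelling so that $x_0x_1$ is a cross edge, Lemma~\ref{Lemma:DWFigureOfEight3}(i) applies directly. In Case~2 you correctly identify the inductive target---reduce to a $(p-2,q+2)$-chain with the pure edge still at an index $r\le m-3$---but the proposed switching mechanism has a genuine gap. Taking $\beta\notin V(L)$ achieves nothing: with $\deg_L(\alpha)=2$ and $\deg_L(\beta)=0$ the only available switch just relabels one vertex of the chain. Taking $\alpha$ on the $p$-cycle (degree~$2$) and $\beta$ on the $q$-cycle (degree~$2$), a single switch cannot be forced to yield a $(p-2,q+2)$-chain, because you do not control the partition supplied by Lemma~\ref{Lemma:CycleSwitch}: one legal partition pairs the two $p$-cycle neighbours of $\alpha$ together and the two $q$-cycle neighbours of $\beta$ together, and then neither available switch changes the cycle lengths of the chain at all.

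The paper's device is to choose each switch so that \emph{every} possible terminus is acceptable---either the decomposition falls out immediately, or specific progress is made---and this costs two switches per reduction step. First perform the $(y_1,x_{m-1})$-switch with origin the link vertex $x_0$ (these are twin since $r\le m-3$ makes $[x_{m-2},\ldots,x_{p-1},x_0,y_1]$ pure-edge-free): any terminus other than $x_{m-2}$ already produces an $m$-cycle together with a $(p+q-m)$-cycle, while terminus $x_{m-2}$ yields the $(q+m-2,p-m+2)$-chain $(x_1,\ldots,x_{m-2},y_1,\ldots,y_{q-1},x_0)\cdot(x_0,x_{m-1},\ldots,x_{p-1})$. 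Then perform the $(x_m,x_{m-2})$-switch with origin $x_{m-1}$: any terminus other than $x_{m-3}$ gives the decomposition, while terminus $x_{m-3}$ gives the $(p-2,q+2)$-chain $(x_1,\ldots,x_{m-3},x_m,\ldots,x_{p-1},x_0)\cdot(x_0,x_{m-1},x_{m-2},y_1,\ldots,y_{q-1})$ with the pure edge still at an index at most $m-3$. The idea you are missing is to take the origin of the first switch to be $x_0$ itself; this is what allows the favourable outcomes to peel off the $m$-cycle $(x_0,x_1,\ldots,x_{m-1})$ directly rather than merely rearranging the chain.
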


\begin{proof}
The proof relies on several applications of Lemma~\ref{Lemma:CycleSwitch}. We consider the case when $p\leq m$ and the case $p\geq m+2$ and $r\leq m-3$ separately.
Note that since the $p$-cycle in $L$ contains exactly one pure edge and the $q$-cycle contains no pure edges, then $p$ is odd and $q$ is even.

\noindent \textbf{Case 1.} First suppose that $p\leq m$. If $p=m$ then we are done so assume $p<m$. Without loss of generality, assume $x_0x_1$ is not a pure edge (otherwise relabel vertices in $L$). Then the result follows by Lemma~\ref{Lemma:DWFigureOfEight3}(i) because $[x_1,x_0,y_1,\ldots,y_{m-p+2}]$ is a path with no pure edges and hence $x_1,y_3,y_5,\ldots,y_{m-p+1}$ are pairwise twin and $y_2,y_4,\ldots,y_{m-p+2}$ are pairwise twin.

\noindent \textbf{Case 2.} Now assume that $p\geq m+2$ and $r\leq m-3$. Then by a simple induction it is sufficient to obtain either the required decomposition, or a $(p-2,q+2)$-chain $(x'_1,x'_2,\ldots,x'_{p-3},x'_0)\cdot (x'_0,y'_1,y'_2,\ldots,y'_{q-1+1})$ that contains exactly one pure edge $x'_rx'_{r+1}$ for some $r \in \{0,\ldots,m-3\}$.

Let $\P'$  be the repacking of $\P$ obtained by performing the $(y_1,x_{m-1})$-switch with origin $x_0$. Note that $\{y_1,x_{m-1}\}$ and $\{x_m,x_{m-2}\}$ are twin pairs in $K_{U\cup W}-K_U$ because $r\leq m-3$ and hence $[x_{m-2},x_{m-1},\ldots,x_{p-1},x_0,y_1]$ is a path with no pure edges. If the terminus of the switch is not $x_{m-2}$ then the reduced leave of $\P'$ has a decomposition into an $m$-cycle and a $(p+q-m)$-cycle and we are done. So assume that the terminus is $x_{m-2}$. Then the reduced leave of $\P'$ is the $(q+m-2,p-m+2)$-chain $(x_1,x_2,\ldots,x_{m-2},y_1,y_2,\ldots,y_{q-1},x_0)\cdot(x_0,x_{m-1},x_m,\ldots,x_{p-1})$.

Let $\P''$ be the repacking of $\P'$ obtained by performing the $(x_m,x_{m-2})$-switch with origin $x_{m-1}$. If the terminus of the switch is not $x_{m-3}$, then the reduced leave of $\P''$ has a decomposition into an $m$-cycle and a $(p+q-m)$-cycle and we are done. Otherwise, the terminus is $x_{m-3}$ and the reduced leave of $\P''$ is the $(p-2,q+2)$-chain $(x_1,\ldots,x_{m-3},x_m,x_{m+1},\ldots,x_{p-1},x_0)\cdot(x_0,x_{m-1},x_{m-2},y_1,y_2,\ldots,y_{q-1})$ where $x_rx_{r+1}$ is a pure edge.
\end{proof}

\begin{lemma}\label{Lemma:2ChainCycles}
Let $U$ and $W$ be disjoint sets with $|U|$ odd and $|W|$ even, and let $M$ be a list of integers.
Let $m$, $p$ and $q$ be  positive integers with $m$ odd and $m, p+q-m\geq 3$.
Suppose there exists an $(M)$-packing $\P$ of $K_{U\cup W}-K_U$ whose reduced leave $L$ is a $(p,q)$-chain such that $L$ contains exactly one pure edge and the link vertex of $L$ is in $W$ if $m=3$. Then there exists a repacking of $\P$ whose reduced leave is the edge-disjoint union of an $m$-cycle and a $(p+q-m)$-cycle.
\end{lemma}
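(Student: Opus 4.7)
The plan is to reduce to an application of Lemma~\ref{Lemma:PathToCycles}. Since $L$ contains exactly one pure edge and its two cycles partition the edges of $L$, exactly one of them contains the pure edge; by swapping the labels of the two cycles of the chain we may assume it is the $p$-cycle. An even subgraph of $K_{U\cup W}-K_U$ has $|E|$ congruent modulo $2$ to its number of pure edges, so $p$ is odd and $q$ is even. Write the $p$-cycle as $(x_1,\ldots,x_{p-1},x_0)$ with link vertex $x_0$ and the pure edge as $x_rx_{r+1}$ for some $r\in\{0,\ldots,p-1\}$ (indices mod $p$); reversing the direction of this cycle replaces $r$ by $p-1-r$, so we may assume $r\le\lfloor(p-1)/2\rfloor$.

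If $p\le m$, Lemma~\ref{Lemma:PathToCycles} applies directly via its first case. If $p\ge m+2$ and $r\le m-3$, the same lemma applies via its second case. The only remaining situation is $p\ge m+2$ together with $m-2\le r\le\lfloor(p-1)/2\rfloor$, which forces $p\ge 2m-3$.

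I would handle this remaining situation by induction on $p$, performing a single preliminary switch via Lemma~\ref{Lemma:CycleSwitch} to produce an $(M)$-packing whose reduced leave is either the desired edge-disjoint union of an $m$-cycle and a $(p+q-m)$-cycle, or another $(p',q')$-chain falling in the range already handled (typically with $p'=p-2$, or with the pure edge relocated to lie within $m-3$ steps of the link vertex). A natural candidate is an $(\alpha,\beta)$-switch in which $\alpha,\beta$ is a pair of twin vertices in $W$ containing an endpoint of the pure edge; for appropriate choices of origin the effect is either to transfer two edges between the two cycles of the chain or to ``rotate'' the pure edge within the $p$-cycle, both of which advance the induction. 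The $m=3$ subcase is the tightest since the condition $r\le m-3=0$ requires the pure edge to be incident to $x_0$; here the hypothesis that the link vertex is in $W$ is essential, because it makes $x_0$ twin to both endpoints of the pure edge and so enables an $(x_0,x_{r+1})$- or $(x_0,x_r)$-switch that relocates the pure edge to position $0$.

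The main obstacle will be controlling the terminus of the preliminary switch, since Lemma~\ref{Lemma:CycleSwitch} only guarantees the existence of a partition of the relevant neighborhoods into pairs without specifying which pair contains a chosen origin. Consequently the proof must analyse each possible terminus, verifying in every subcase that the resulting reduced leave is again a chain with its pure edge placed so that either the next step of the induction or an immediate application of Lemma~\ref{Lemma:PathToCycles} can be performed.
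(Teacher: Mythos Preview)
Your reduction to Lemma~\ref{Lemma:PathToCycles} is correct and matches the paper: the parity observation, the direct applications when $p\le m$ or when $p\ge m+2$ with $r\le m-3$, and the isolation of the residual case are all fine. The gap is that the residual case is only a plan, not a proof. You propose a switch whose pair contains an endpoint of the pure edge but fix neither the second vertex nor the origin, and you explicitly flag terminus control as the outstanding difficulty. In particular, your $m=3$ suggestion pairs an endpoint of the pure edge with the link vertex $x_0$, which has degree~$4$; the resulting case analysis on termini is not clean, and several outcomes do not obviously return a $2$-chain.

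The paper sidesteps this by switching on the \emph{near} side of the link vertex rather than at the pure edge. Since in the residual case neither labelling of the $p$-cycle has $r\le m-3$, one gets $r\ge m-2$; for $m=3$ the hypothesis $x_0\in W$ then forces $r\ge 2$ (otherwise $x_1\in U$ would be an endpoint of the pure edge). Hence $[x_0,x_1,x_2]$ contains no pure edge, so $x_0$ and $x_2$ are twin, and the $(x_0,x_2)$-switch with origin $x_3$ has only two relevant outcomes: terminus $x_{p-1}$ gives a $(p,q)$-chain with the pure edge shifted to index $r-2$, and any other terminus gives a $(p-2,q+2)$-chain. Both strictly advance toward the already-handled range, so a short induction finishes.
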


\begin{proof}
Since $L$ contains exactly one pure edge, $p+q$ must be odd. Without loss of generality suppose $p$ is odd. Then the $p$-cycle in $L$ contains the pure edge and the $q$-cycle in $L$ contains no pure edges.

\noindent \textbf{Case 1.}
Suppose either that $p\leq m$ or that $p\geq m+2$ and $L$ can be labelled $(x_1,x_2,\ldots,x_{p-1},x_0)\cdot (x_0,y_1,y_2,\ldots,y_{q-1})$ such that $x_rx_{r+1}$ is the pure edge for some $r \in \{0,\ldots,m-3\}$. Then the result follows by Lemma~\ref{Lemma:PathToCycles}.

\noindent \textbf{Case 2.}
Suppose that $p\geq m+2$ and there is no such labelling. Let $L$ be labelled $(x_1,x_2,\ldots,x_{p-1},x_0)\cdot (x_0,y_1,y_2,\ldots,y_{q-1})$ such that $x_rx_{r+1}$ is the pure edge for some $r \in \{m-2,\ldots,p-1\}$ (subscripts modulo $p$). Then $r \geq 2$, using the fact that $x_0 \in W$ if $m = 3$. It is sufficient to show that there exists a repacking $\P'$ of $\P$ whose reduced leave is either a $(p,q)$-chain that can be labelled as $(x'_1,x'_2,\ldots,x'_{p-1},x'_0)\cdot (x'_0,y'_1,y'_2,\ldots,y'_{q-1})$ where the pure edge is $x'_{r-2}x'_{r-1}$, or a $(p-2,q+2)$-chain. By repeating this process we eventually obtain a repacking of $\P$ which satisfies the criteria of Case 1.

Let $\P'$ be the repacking of $\P$ obtained by performing the $(x_0,x_2)$-switch with origin $x_3$. Note that $x_0$ and $x_2$ are twin in $K_{U\cup W}-K_U$ because $r \geq 2$ and hence $[x_0,x_1,x_2]$ is a path with no pure edges. If the terminus of the switch is $x_{p-1}$, then the reduced leave of $\P'$ is the $(p,q)$-chain $(x_3,x_4,\ldots,x_{p-1},x_2,x_1,x_0)\cdot (x_0,y_1,y_2,\ldots,y_{q-1})$ and we are done. If the terminus of the switch is not $x_{p-1}$ then the reduced leave of $\P'$ is a $(p-2,q+2)$-chain.
\end{proof}

\begin{lemma}\label{Lemma:ReducePathLength}
Let $U$ and $W$ be disjoint sets with $|U|$ odd and $|W|$ even, and let $M$ be a list of integers. Let $p$ and $s$ be integers such that $p\geq 4$ and $s\geq 2$.
Suppose there exists an $(M)$-packing $\P$ of $K_{U\cup W}-K_U$ whose reduced leave $L$ is a good $s$-chain that contains exactly one pure edge and has a decomposition $\{P,L-P\}$ into two paths such that $P$ has length $p$ and each path has both end vertices in $W$. Suppose further that $P$ has a subpath $P_0=[x_0,\ldots,x_r]$ such that $2 \leq r \leq p-1$, $x_0$ is an end vertex of $P$, $P_0$ contains no pure edge, and $\deg_L(x_{r-1})=\deg_L(x_r)=2$.
Then there is a repacking of $\P$ whose reduced leave $L'$ is a good $s$-chain that has a decomposition $\{P',L'-P'\}$ into two paths such that $P'$ has length $p-2$, each path has both end vertices in $W$, and $P'$ contains a pure edge if and only if $P$ does.
\end{lemma}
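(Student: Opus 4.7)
The plan is to apply Lemma~\ref{Lemma:CycleSwitch} once at a carefully chosen twin pair near the endpoint $x_0$ of $P$, producing a repacking that shortens $P$ by two edges at its $x_0$-end. First I record some structural facts. Since $P$'s endpoints both lie in $W$ we have $x_0\in W$, and since $P_0$ contains no pure edge the vertices of $P_0$ alternate in parity, so in particular $x_0,x_2\in W$ are twin in $K_{U\cup W}-K_U$. Being an endpoint of both $P$ and $L-P$, the vertex $x_0$ satisfies $\deg_L(x_0)=2$, so $\N_L(x_0)=\{x_1,u\}$ for some $u\in V(L-P)$. Moreover $x_{r-1}$ and $x_r$ are internal in $P$ with $\deg_L=2$, so they do not appear on $L-P$ at all and lie together with the edges around them in a single cycle of $L$.

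For $r=2$, I perform the $(x_0,x_2)$-switch with origin $x_3$ (the vertex of $P$ just after $x_2$). Since $\deg_L(x_2)=2$, we have $\N_L(x_2)=\{x_1,x_3\}$; assuming $x_0x_3\notin L$ (the opposite case is handled below), the set to be partitioned collapses to $\{u,x_3\}$, forcing the unique pair $\{u,x_3\}$. The switch toggles the four edges $x_0u,\,x_0x_3,\,x_2u,\,x_2x_3$; the first and fourth are in $L$ while the second and third are not (the third because $\deg_L(x_2)=2$), so after the switch these roles reverse. Tracing $L'$ shows it decomposes into $P'=[x_0,x_3,x_4,\ldots,x_p]$ of length $p-2$ together with $L'-P'=[x_0,x_1,x_2,u,v_2,\ldots,y_p]$, both having endpoints in $W$; moreover the pure-edge count of $P'$ matches that of $P$ since the toggled edges come in parity-matched pairs ($x_0u$ and $x_2u$ are both pure or both cross, while $x_0x_3$ and $x_2x_3$ are both cross). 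In the degenerate case $x_0x_3\in L$, no switch is required: since then $x_0x_3$ is an edge of $L-P$, we simply redecompose $L$ into $P'=[x_0,x_3,x_4,\ldots,x_p]$ and $L'-P'=[x_0,x_1,x_2,x_3,\ldots,y_p]$. For $r\geq3$ the same $(x_0,x_2)$-switch works verbatim when $\deg_L(x_2)=2$; if instead $\deg_L(x_2)=4$ (so $x_2$ is a link vertex) additional pairs appear in the partition, but one can still select the pair containing $x_3$, with its partner determined by the cycle structure, and carry out the same argument, possibly after an auxiliary $(x_{r-2},x_r)$-switch that simplifies the local adjacencies.

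The crux, and the main technical obstacle, is verifying that $L'$ remains a \emph{good} $s$-chain. Because $\deg_L(x_1)=\deg_L(x_2)=2$ when $r=2$ (and analogously $\deg_L(x_{r-1})=\deg_L(x_r)=2$ in general), the edges $x_0u,\,x_0x_1,\,x_1x_2,\,x_2x_3$ all lie in one cycle $C$ of $L$. The switch modifies $C$ internally: the four toggled edges are exactly the ones around $x_0,x_2$ in $C$, and tracing the new incidences confirms that $C$ is replaced by a single cycle on the same vertex set and of the same length, while all other cycles of $L$ are untouched. Consequently, the cycle structure of $L$ carries over to $L'$, and both the good-chain condition on link-vertex placement and the location of the unique pure edge (in an end cycle when $s\geq3$) are preserved. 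The delicate bookkeeping arises when $x_2$, $x_3$, or $u$ is itself a link vertex of the chain: in each such case one must verify that the newly inserted edges sit in the modified copy of $C$ and that the link vertex retains its role in the adjacent cycle, so that chain connectivity is not disrupted and $L'$ is indeed a good $s$-chain with the required two-path decomposition.
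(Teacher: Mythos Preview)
Your approach has a genuine gap. The $(x_0,x_2)$-switch works cleanly only when $\deg_L(x_2)=2$, but the hypothesis controls the degrees of $x_{r-1}$ and $x_r$, not of $x_1$ or $x_2$. When $r\geq 3$ and $x_2$ happens to be a link vertex, the set to be partitioned in Lemma~\ref{Lemma:CycleSwitch} has four elements $\{u,x_3,a,b\}$, and you cannot ``select the pair containing $x_3$'': the partition is handed to you by the lemma, and the terminus paired with $x_3$ may well be $a$ or $b$ rather than $u$. In that case the resulting leave need not be a good $s$-chain, and your text does not analyse it. The phrase ``possibly after an auxiliary $(x_{r-2},x_r)$-switch'' gestures at a fix but is not an argument.

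You have also overcomplicated the base case. When $r=2$ no switch is needed at all: since $\deg_L(x_1)=\deg_L(x_2)=2$, the vertices $x_1,x_2$ lie only on $P$, so taking $P'=[x_2,\ldots,x_p]$ already gives a decomposition $\{P',L-P'\}$ of $L$ itself with all the required properties.

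The paper's proof runs by induction on $r$. The base case $r=2$ is the trivial redecomposition just described. For $r\geq 3$, one may assume $r$ is minimal, which forces $\deg_L(x_{r-2})=4$; that is, $x_{r-2}$ is a link vertex. One then performs the $(x_r,x_{r-2})$-switch with origin $x_{r-3}$. Because $x_r$ has degree~$2$ and $x_{r-2}$ has degree~$4$ and they are twin, the switch simply relocates the link from $x_{r-2}$ to $x_r$ (or leaves it at $x_{r-2}$), so $L'$ is a good $s$-chain regardless of the terminus. If the terminus is not $x_{r+1}$, one reads off $P'=[x_0,\ldots,x_{r-3},x_r,x_{r+1},\ldots,x_p]$ of length $p-2$; if the terminus is $x_{r+1}$, one obtains a new path of length $p$ with a subpath $P_0'$ of length $r-1$ satisfying the hypotheses, and the induction finishes. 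Ironically, this is exactly your ``auxiliary'' switch --- it should be the main move, not an afterthought.
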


\begin{proof}
Label the vertices in $V(P) \setminus V(P_0)$ so that $P=[x_0,\ldots,x_p]$. We prove the result by induction on the length of $P_0$. If $|E(P_0)|=2$, then $\{P',L-P'\}$ where $P'=[x_2,\ldots,x_p]$ is a decomposition of $L$ with the required properties. So we can assume that $|E(P_0)| \geq 3$. By induction we can assume that $P_0$ is the shortest subpath of $P$ satisfying the required conditions. Because $r\geq 3$, this implies that $\deg_L(x_{r-2})=4$ and $x_{r-2}$ is a link vertex of $L$.

The vertices $x_r$ and $x_{r-2}$ are twin in $K_{U\cup W}-K_U$ because $[x_{r-2},x_{r-1},x_r]$ is a path with no pure edges. Let $L'$ be the reduced leave of the repacking of $\mathcal{P}$ obtained by performing the $(x_r,x_{r-2})$-switch with origin $x_{r-3}$. Note that $L'$ is a good $s$-chain irrespective of the terminus of the switch. If the terminus of the switch is not $x_{r+1}$, then $\{P',L'-P'\}$ where $P'=[x_0,x_1,\ldots,x_{r-3},x_r,x_{r+1},\ldots,x_p]$ is a decomposition of $L'$ such that $P'$ has length $p-2$, each path has both end vertices in $W$, and $P'$ contains a pure edge if and only if $P$ does.
If the terminus of the switch is $x_{r+1}$, then $\{P',L'-P'\}$ where $P'=[x_0,x_1,\ldots,x_{r-3},x_r,x_{r-1},x_{r-2},x_{r+1},x_{r+2},\ldots,x_p]$ is a decomposition of $L'$ into two paths such that $P'$ has length $p$ and contains a pure edge if and only if $P$ does, and each path has both end vertices in $W$. Further $P'$ has the subpath $P'_0=[x_0,\ldots,x_{r-3},x_{r},x_{r-1}]$ and we know that $x_0$ is an end vertex of $P'$, $P'_0$ contains no pure edge, and $\deg_{L'}(x_{r})=\deg_{L'}(x_{r-1})=2$. Thus, because $|E(P'_0)|=r-1$, we are finished by our inductive hypothesis.
\end{proof}

\begin{lemma}\label{Lemma:2Paths}
Let $U$ and $W$ be disjoint sets with $|U|$ odd and $|W|$ even, and let $M$ be a list of integers.
Let $m$, $m'$ and $s$ be integers such that $m+m'$ is odd, $m, m' \geq \max(s,3)$ and $s\geq 2$. Suppose there exists an $(M)$-packing $\P$ of $K_{U\cup W}-K_U$ whose reduced leave is a good $s$-chain of size $m+m'$ that contains exactly one pure edge. Then there exists a repacking of $\P$ whose reduced leave is a good $s$-chain that has a decomposition into an $m$-path and an $m'$-path such that the end vertices of the paths are twin in $K_{U\cup W}-K_U$.
\end{lemma}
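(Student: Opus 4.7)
The plan is to first find any decomposition of $L$ into two paths whose endpoints are twin, and then adjust the path lengths to the required values $m$ and $m'$ using Lemma~\ref{Lemma:ReducePathLength}. The structural starting point is that, in any decomposition of a good $s$-chain $L$ into two paths, every link vertex (of degree $4$) must be internal to both paths with each path using exactly one edge from each of the two incident cycles at that link, and the two paths must share the same pair of endpoints. A coverage argument --- which rules out an endpoint lying in an internal cycle because a path cannot enter and leave such a cycle without revisiting one of its link vertices --- then forces the two common endpoints to lie one in the end cycle $A_1$ and one in the end cycle $A_s$. Writing them as $x\in V(A_1)\setminus\{v_1\}$ and $z\in V(A_s)\setminus\{v_{s-1}\}$, the resulting path lengths are $\sum_{i=1}^s p_i$ and $\sum_{i=1}^s(a_i-p_i)$, where $p_i$ is the arc length used to traverse cycle $A_i$.

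The good-chain structure forces $v_1\in W$, with subsequent link vertices alternating in $U/W$-type, and the single pure edge creates an extra $W$-$W$ adjacency in $A_1$. Counting $W$-vertices then shows that each of $V(A_1)\setminus\{v_1\}$ and $V(A_s)\setminus\{v_{s-1}\}$ contains vertices of both twin types, so we may pick $x$ and $z$ both in $W$ (or both in $U$, according to the parity of $s$ and which choice matches later requirements). Since $a_1$ is odd the two arcs from $v_1$ to $x$ have opposite parities, and by varying the position of $z$ (there are several $W$-vertices in $A_s$) the parity of $\ell := \sum_{i=1}^s p_i$ is unconstrained by the twin requirement and can be made to match that of $m$. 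This produces an initial $2$-path decomposition $\{P,L-P\}$ with $P$ of length $\ell$ and with every end vertex of both paths lying in $W$.

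I would then apply Lemma~\ref{Lemma:ReducePathLength} iteratively (to $P$ or to $L-P$, depending on whether we need to shorten or lengthen $P$) to shift length between the two paths in steps of $2$, until they have lengths exactly $m$ and $m'$. Each application preserves the good $s$-chain structure of the reduced leave and keeps the end vertices in $W$, so after $|m-\ell|/2$ iterations we obtain the required decomposition. The main obstacle is verifying the subpath hypothesis of Lemma~\ref{Lemma:ReducePathLength} at every iteration, namely that the chosen path has a subpath of length at least $2$ starting at an endpoint, containing no pure edge, whose last two vertices have degree $2$ in $L$. Because the degree-$2$ vertices are precisely the non-link vertices and the pure edge lies in $A_1$, this condition is most delicate at the $A_1$-endpoint (where the pure edge may sit close by) and when the subpath must pass near a link vertex. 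Boundary cases --- such as $a_1=3$ (forcing the pure edge to be incident to $v_1$) or $m$ or $m'$ close to the lower bound $s$ --- will likely require a separate case analysis, possibly switching the roles of $W$ and $U$ or starting from a different initial decomposition that avoids the problematic region.
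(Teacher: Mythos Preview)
Your overall strategy is exactly the paper's: find an initial two-path decomposition of the good $s$-chain $L$ with twin end vertices and the correct parity, then repeatedly apply Lemma~\ref{Lemma:ReducePathLength} to shift length between the paths by $2$ at a time. Your structural analysis of two-path decompositions of a chain (shared endpoints, one in each end cycle, each path taking one arc per cycle) is correct and useful.

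Where you diverge from the paper is in the set-up and in the handling of the obstacle you flag. For the set-up, the paper does not juggle parities by moving $x$ and $z$: it simply observes that because $L$ is good and has exactly one pure edge, there is \emph{some} decomposition $\{H,L-H\}$ with both end vertices in $W$, $H$ odd and containing the pure edge, $L-H$ even; then it lets $m^{\ast}\in\{m,m'\}$ and $P\in\{H,L-H\}$ be any pair with $|E(P)|\ge m^{\ast}$ and $|E(P)|\equiv m^{\ast}\pmod 2$. This is cleaner than your parity argument and avoids the boundary worries about $a_1=3$.

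The real gap is the one you name but do not close: verifying the subpath hypothesis of Lemma~\ref{Lemma:ReducePathLength} at each step. The paper closes it with a single dichotomy rather than ad hoc boundary cases. Either some cycle $C$ of the chain satisfies $|E(C\cap P)|\ge 3$, in which case one can always choose the subpath $P_0$ to end with two consecutive edges of $C\cap P$ (so its last two vertices have degree~$2$) and to avoid the pure edge (when $C$ contains it, $C$ is the end cycle with link in $W$, so one of the two ends of $P$ works); or every cycle contributes at most two edges to $P$. In the latter case the good-chain structure together with both end vertices lying in $W$ forces any cycle contributing two edges to be an end cycle with link in $W$, so at most two cycles contribute two edges and $p\le s+2$; since $p>m^{\ast}\ge s$ and parity matches, $p=s+2$ and $m^{\ast}=s$, and deleting the two end edges of $P$ gives the required $m$-path and $m'$-path with end vertices now in~$U$. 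This dichotomy is the missing idea you should supply in place of the vague ``separate case analysis''.
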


\begin{proof}
Let $L$ be the reduced leave of $\P$ and note that $|E(L)|=m+m'$. Because $L$ is good and contains exactly one pure edge, we can find some decomposition $\{H,L-H\}$ of $L$ into two paths such that $H$ has odd length and contains the pure edge, $L-H$ has even length, and each of the paths has both end vertices in $W$. Let $m^*\in\{m,m'\}$ and $P\in\{H,L-H\}$ such that $|E(P)|\geq m^*$ and $|E(P)|\equiv m^* \md{2}$ (such an $m^*$ and $P$ exist because $|E(L)|=m+m'$).
If $|E(P)|=m^*$ then we are done, so suppose $|E(P)|>m^*$. Let $p=|E(P)|$.

\noindent {\bf Case 1.} Each cycle of $L$ contains at most two edges of $P$. Then exactly $p-s$ cycles of the chain contain two edges of $P$ and the rest contain one edge of $P$. Because $L$ is good and both end vertices of $P$ are in $W$, if $C$ is a cycle of $L$ that contains two edges of $P$ then $C$ is an end cycle of $L$, the link vertex of $C$ is in $W$, and $C\cap P$ contains no pure edges. Thus, because $p > m^*\geq s$, it must be that $p=s+2$ and $m^*=s$. Then $\{P',L-P'\}$, where $P'$ is obtained from $P$ by removing the end vertices and the incident edges, is a decomposition of $L$ into an $m$-path and an $m'$-path such that both end vertices of each path are in $U$.

\noindent {\bf Case 2.} There exists a cycle $C$ in $L$ such that $C \cap P$ is a path of length at least 3. Let $P_0=[x_0,\ldots,x_r]$ be a subpath of $P$ such that $x_0$ is an end vertex of $P$, $P_0$ contains no pure edge, and $P_0$ contains exactly two edges in $C \cap P$.
If $C \cap P$ contains no pure edge then it is easy to see such a subpath exists. If $C \cap P$ contains the pure edge then (since $L$ is good) $C$ is an end cycle of $L$ whose link vertex is in $W$ and again there exists such a subpath.
So we can apply Lemma~\ref{Lemma:ReducePathLength} to obtain a repacking of $\P$ whose reduced leave $L'$ is a good $s$-chain that has a decomposition $\{P',L'-P'\}$ into two paths such that $P'$ has length $p-2$, $P'$ has a pure edge if and only if $P$ does, and both paths have end vertices in $W$. It is clear that by repeating this procedure we will eventually obtain a repacking of $\P$ whose reduced leave either has a decomposition into two paths which satisfies the criteria for Case 1 or has a decomposition into an $m$-path and an $m'$-path such that both end vertices of each path are in $W$.
\end{proof}

\begin{lemma}\label{Lemma:2Cycles}
Let $U$ and $W$ be disjoint sets with $|U|$ odd and $|W|$ even, and let $M$ be a list of integers.
Let $m$, $m'$ and $s$ be positive integers such that $s\geq 2$, $m+m'$ is odd, $m, m'\geq \max(s,3)$, $m+m'\leq\min(2|U|+3, 2|W|+1,|U|+|W|)$, and $m+m'\leq 2|U|+1$ if $3\in\{m,m'\}$. Suppose there exists an $(M)$-packing $\P$ of $K_{U\cup W}-K_U$ whose reduced leave has size $m+m'$, contains exactly one pure edge, and is either a good $s$-chain or a good $s$-ring that, if $3\in \{m,m'\}$, is not a $2$-chain with link vertex in $U$. Then there exists a repacking of $\P$ whose reduced leave is the edge-disjoint union of an $m$-cycle and an $m'$-cycle.
\end{lemma}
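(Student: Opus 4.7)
The argument proceeds by induction on $s$ (the number of cycles in the chain or ring), with Lemma~\ref{Lemma:2ChainCycles} as the base case. At each step a repacking is produced, so the statement about $\mathcal{P}$ transforms accordingly.

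For $s=2$, I split according to whether the reduced leave is a chain or a ring. In the chain subcase, the exception clause of the hypothesis---the leave is not a $2$-chain with link vertex in $U$ when $3\in\{m,m'\}$---exactly matches the corresponding condition in Lemma~\ref{Lemma:2ChainCycles}, so that lemma immediately yields the desired $m$-cycle and $m'$-cycle decomposition. In the ring subcase, I invoke Lemma~\ref{Lemma:Isolates}(iii): the two link vertices have degree $4$ in $L$, and the bound $m+m'\leq\min(2|U|+3,2|W|+1,|U|+|W|)$ is stronger than what the lemma requires, so we obtain a vertex $z\notin V(L)$ twin to one of the link vertices $v$. A $(v,z)$-switch, supplied by Lemma~\ref{Lemma:CycleSwitch}, then either transfers one ring cycle intact onto $z$---directly producing a leave that is the edge-disjoint union of two cycles whose lengths can if necessary be adjusted to $m,m'$ by a further switch---or converts the $2$-ring into a $2$-chain, to which the chain subcase applies.

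For the inductive step $s\geq 3$, I apply Lemma~\ref{Lemma:2Paths} to obtain a repacking whose reduced leave $L^{*}$ is a good $s$-chain that decomposes into an $m$-path $P$ and an $m'$-path $Q$ with common endpoints $\alpha,\beta\in W$. Since $L^{*}$ has $s-1\geq 2$ link vertices of degree $4$, Lemma~\ref{Lemma:Isolates}(iii) provides a fresh vertex $z\notin V(L^{*})$ twin to $\alpha$. An $(\alpha,z)$-switch, combined with Lemma~\ref{Lemma:ReducePathLength} applied beforehand to prune any unnecessary $P$- or $Q$-segments, rearranges $P\cup Q$ so that the two paths no longer share both endpoints. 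Iterating this manoeuvre reduces the structure either to a good chain or ring of strictly smaller size, to which the inductive hypothesis applies, or directly to the target leave consisting of an $m$-cycle and an $m'$-cycle.

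The principal obstacle is this closure step in the inductive case. Because $P$ and $Q$ share both endpoints $\alpha,\beta\in W$, no single switch can close both paths simultaneously, as this would demand the edge $\alpha\beta$ to occur in two different cycles of the leave. Rerouting through a fresh twin vertex outside $V(L^{*})$ is therefore essential, and a careful case analysis over the partition guaranteed by Lemma~\ref{Lemma:CycleSwitch} is needed to ensure that at least one admissible terminus produces the intended reorganisation while preserving the goodness of every intermediate leave, so that the inductive hypothesis remains applicable. The edge case $m=3$ requires additional vigilance to avoid landing in the excluded $2$-chain configuration at the base level.
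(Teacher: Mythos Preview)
Your inductive skeleton is right, but the content of the inductive step is off in two essential ways.

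\textbf{The ring case for $s\geq 3$ is missing.} Lemma~\ref{Lemma:2Paths} only applies when the reduced leave is a good $s$-chain. You invoke it as the sole tool in the inductive step, so when the leave is a good $s$-ring with $s\geq 3$ you have no argument. The paper handles this by choosing a specific link vertex $x$ of the ring (in $U$ if $s$ is even, a link of the pure-edge cycle if $s$ is odd), finding a fresh twin $y$ via Lemma~\ref{Lemma:Isolates}(ii), and performing an $(x,y)$-switch with origin inside that ring cycle: the result is either a good $s$-chain (reduce to the chain case) or a good $(s{-}1)$-ring (apply induction). Your proposal never reaches this manoeuvre.

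\textbf{The chain case uses the wrong switch.} After Lemma~\ref{Lemma:2Paths}, the $m$-path $[x_0,\ldots,x_m]$ and the $m'$-path necessarily share the \emph{same} two end vertices $x_0,x_m$ (any two-path decomposition of an even graph must), and these are twin by the lemma. The correct move is the $(x_0,x_m)$-switch with origin $x_1$---no fresh vertex is needed. Your claim that ``no single switch can close both paths simultaneously'' is false: when the terminus is $q$ or $q'$ (the neighbours of $x_0,x_m$ on the $m'$-path), the switch simultaneously turns the $m$-path into an $m$-cycle and the $m'$-path into an $m'$-cycle. Only the bad terminus $x_{m-1}$ needs further work. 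Your alternative---finding a fresh twin $z$ of $\alpha$ via Lemma~\ref{Lemma:Isolates}(iii)---does not do what you claim: that lemma produces a fresh twin of some degree-$4$ vertex, whereas the path endpoints have degree~$2$ in $L^*$, so there is no reason $z$ should be twin to $\alpha$. The subsequent talk of ``pruning segments'' with Lemma~\ref{Lemma:ReducePathLength} is too vague to constitute an argument.

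A smaller point: in the $2$-ring base case you use Lemma~\ref{Lemma:Isolates}(iii), but when $3\in\{m,m'\}$ you must ensure the resulting $2$-chain has its link vertex in $W$; the paper achieves this by applying Lemma~\ref{Lemma:Isolates}(i) instead, forcing $x,y\in U$ so that the surviving link lies in $W$.
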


\begin{proof}
Let $L$ be the reduced leave of $\P$. We first show that the result holds for $s=2$. If $L$ is a $2$-chain then the result follows by Lemma~\ref{Lemma:2ChainCycles}. If $L$ is a $2$-ring then by our hypotheses and Lemma~\ref{Lemma:Isolates} there are twin vertices $x$ and $y$ in $K_{U\cup W}-K_U$ such that $\deg_L(x) \geq 4$, $y \notin V(L)$ and $x,y\in U$ if $3\in\{m,m'\}$ (if $3\in\{m,m'\}$ then apply Lemma~\ref{Lemma:Isolates}(i), otherwise apply Lemma~\ref{Lemma:Isolates}(iii)). Performing an $(x,y)$-switch results in a repacking of $\P$ whose reduced leave is a $2$-chain with link vertex in $W$ if $3\in\{m,m'\}$, and the result follows by Lemma~\ref{Lemma:2ChainCycles}.
So the result holds for $s=2$ and it is sufficient to show, for each integer $s' \geq 3$, that if the result holds for $s=s'-1$ then it holds for $s=s'$.

\noindent {\bf Case 1.} Suppose that $L$ is a good $s'$-chain.
By Lemma~\ref{Lemma:2Paths} we can obtain a repacking of $\P$ with a reduced leave whose only component is a good $s'$-chain with a decomposition into paths of length $m$ and $m'$ whose end vertices are twin. Let $[x_0,x_1,\ldots,x_{m}]$ be the path of length $m$ and let $\P'$ be the repacking of $\P$ obtained by performing the $(x_0,x_{m})$-switch with origin $x_1$.

If the terminus of the switch is not $x_{m-1}$, then the reduced leave of $\P'$ is the edge-disjoint union of an $m$-cycle and an $m'$-cycle and we are done. If the terminus of the switch is $x_{m-1}$, then the reduced leave of $\P'$ is a good $(s'-1)$-ring that contains exactly one pure edge and the result follows by our inductive hypothesis.

\noindent {\bf Case 2.} Suppose that $L$ is a good $s'$-ring.
Let $A$ be the ring cycle of $L$ that contains the pure edge in $L$ and note that if $s'$ is odd then both link vertices of $A$ are in $W$. Let $x$ and $y$ be twin vertices in $K_{U\cup W}-K_U$ such that $x$ is a link vertex in $A$, $x\in U$ if $s'$ is even, and $y \notin V(L)$. Such a vertex $y$ exists by Lemma~\ref{Lemma:Isolates}(ii) because $m+m'\leq\min(2|U|+3, 2|W|+1,|U|+|W|)$, $W$ contains two vertices of degree 4 in $L$ if $s'$ is odd, and $U$ contains two vertices of degree 4 in $L$ if $s'$ is even (for then $s' \geq 4$).
Let $\P'$ be the repacking of $\P$ obtained by performing an $(x,y)$-switch with origin in $A$. If the terminus of this switch is also in $A$, then the reduced leave of $\P'$ is a good $s'$-chain and we can proceed as in Case 1. Otherwise, the reduced leave of $\P'$ is a good $(s'-1)$-ring and the result follows by our inductive hypothesis.
\end{proof}

\begin{lemma}\label{Lemma:1Degree4Vertex}
Let $U$ and $W$ be disjoint sets with $|U|$ odd and $|W|$ even, and let $M$ be a list of integers.
Let $m$, $m'$, $k$ and $t$ be positive integers such that $m,m' \geq \max(k+t-1,3)$, $m+m'\leq\min(2|U|+3, 2|W|+1,|U|+|W|)$, and $m+m'\leq 2|U|+1$ if $3\in\{m,m'\}$. Suppose there exists an $(M)$-packing $\P$ of $K_{U\cup W}-K_U$ with a reduced leave $L$ of size $m+m'$ such that $L$ contains exactly one pure edge and $L$ has exactly $k$ components, $k-1$ of which are cycles and one of which is a good $t$-chain that, if $3\in\{m,m'\}$, is not a $2$-chain with link vertex in $U$. Then there exists a repacking of $\P$ whose reduced leave is the edge-disjoint union of an $m$-cycle and an $m'$-cycle.
\end{lemma}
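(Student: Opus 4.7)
The plan is to argue by induction on $k$, the number of components of $L$.

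For the base case $k=1$, the reduced leave $L$ is itself the good $t$-chain of size $m+m'$ with one pure edge. All of the numerical hypotheses of the present lemma are exactly those of Lemma~\ref{Lemma:2Cycles} with $s=t$ (including the ``not a $2$-chain with link in $U$ if $3\in\{m,m'\}$'' clause), so Lemma~\ref{Lemma:2Cycles} delivers the required repacking directly.

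For the inductive step $k\geq 2$, I will produce a repacking $\P^*$ of $\P$ whose reduced leave has $k-1$ components, namely a good $(t+1)$-chain (still containing the unique pure edge) together with $k-2$ isolated cycles. Since $(k-1)+(t+1)-1=k+t-1$ and the numerical bounds depend only on $m+m'$, every hypothesis carries over, and the inductive hypothesis then finishes the proof. To construct $\P^*$, I pick an isolated cycle $C$ of $L$; because the sole pure edge sits in the chain, $C$ uses only cross edges, has even length at least $4$, and meets both $U$ and $W$. Let $A$ be the end cycle of the chain that does \emph{not} contain the pure edge (forced by goodness when $t\geq 3$, arbitrary when $t=2$), and let $\ell$ denote its link vertex to the rest of the chain. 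Choose a non-link vertex $\alpha\in V(A)$ together with a twin $\beta\in V(C)$, selecting the colour class of $\alpha$ so that $\beta$ has parity opposite to $\ell$; this ensures that the new internal cycle $A'$ of the merged chain will have one link in $U$ and one in $W$, as goodness requires. Then apply the $(\alpha,\beta)$-switch of Lemma~\ref{Lemma:CycleSwitch}: following the same case analysis used in the proofs of Lemmas~\ref{Lemma:2ChainCycles}--\ref{Lemma:2Cycles}, one outcome reroutes $A$ through $\beta$ while isolating $\alpha$ and leaving $C$ otherwise unchanged, producing the desired good $(t+1)$-chain; alternative outcomes yield a good $(t+1)$-ring (to which Lemma~\ref{Lemma:2Cycles} applies directly when $k=2$, and which can be opened into a chain by an additional switch when $k>2$) or structures recoverable by a further switch.

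The hardest part will be the parity bookkeeping around goodness. For $t\geq 3$ the link parities already alternate along the chain, so the required parity of $\beta$ is available and $A$ admits a non-link vertex of that parity. The delicate case is $t=2$: the link of a $2$-chain is unconstrained, and when its parity clashes with that needed to produce a good $3$-chain, a pre-processing switch is required to relocate a link vertex. The conclusion of Lemma~\ref{Lemma:Isolates} supplies such an auxiliary twin vertex outside $V(L)$, and its isolate-existence hypotheses hold exactly in our regime $m+m'\leq\min(2|U|+3,2|W|+1,|U|+|W|)$ (and $\leq 2|U|+1$ when $3\in\{m,m'\}$). A short case distinction on the parity of $t$ and on whether $3\in\{m,m'\}$—where the hypothesis excluding a final $2$-chain with link in $U$ precisely rules out the otherwise-unreachable configuration—completes the inductive step.
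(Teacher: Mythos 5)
Your overall strategy---induction on $k$ with Lemma~\ref{Lemma:2Cycles} as the base case, absorbing one cycle component into the chain per step via a switch between the chain and that cycle---is essentially the paper's strategy, but your treatment of $t=2$ has genuine gaps. First, your claim that ``the sole pure edge sits in the chain'' is false when $t=2$: a $2$-chain is good regardless of pure edges, so the unique pure edge may lie in one of the $k-1$ cycle components. In that situation you cannot attach an arbitrary cycle $C$: if the pure edge stays in a separate cycle component, the resulting $3$-chain contains no pure edge and is therefore not good (goodness for $s\geq 3$ requires an end cycle containing a pure edge), so your inductive hypothesis does not apply. You are forced to attach the specific cycle containing the pure edge, necessarily via vertices of $W$, and even then the resulting $3$-chain may have both link vertices in $W$, which again is not good; the paper's Case~2 repairs exactly this with a further switch to a vertex of $W$ outside the leave, whose existence comes from Lemma~\ref{Lemma:Isolates}(ii). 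Your closing sentence about a ``pre-processing switch'' gestures at these ingredients but never identifies the failing configurations or how the switch fixes them.

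Second, your assertion that the hypothesis excluding a $2$-chain with link vertex in $U$ ``precisely rules out the otherwise-unreachable configuration'' is incorrect: that hypothesis applies only when $3\in\{m,m'\}$. When $3\notin\{m,m'\}$, the configuration $t=2$ with the pure edge in the chain and link vertex in $U$ is entirely possible, and there your parity rule (pick $\alpha$ in the end cycle without the pure edge, on the side opposite the link) produces a $3$-chain whose pure-edge end cycle has its link vertex in $U$---not good---while the opposite choice gives an internal cycle with both links in $U$---also not good. The paper's Case~3 handles this configuration by switching on the link vertex itself against a $U$-vertex of a cycle component, with its own outcome analysis; your proposal has no counterpart. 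A smaller inaccuracy: a switch between a vertex of the chain and a vertex of a different component cannot produce a ring; the alternative outcome is a merged good $t$-chain with $k-1$ components (which your induction can easily accommodate, since $(k-1)+t-1\leq k+t-1$), so the ring discussion indicates the switch outcomes were not actually traced.
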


\begin{proof}
First note that, since $L$ contains exactly one pure edge and $L$ has a decomposition into cycles, $m+m'$ is odd. Without loss of generality let $m$ be odd and $m'$ be even.

By Lemma~\ref{Lemma:2Cycles} it is sufficient to show that we can construct a repacking of $\P$ whose reduced leave is a good $s$-chain for some $s \in \{2,\ldots,k+t-1\}$ and is not a $2$-chain with link vertex in $U$ if $m=3$. If $k=1$, then we are finished, so we can assume $k \geq 2$. By induction on $k$, it suffices to show that there is a repacking of $\P$ with a reduced leave with exactly $k-1$ components, one of which is a good $t'$-chain for $t' \in \{t,t+1\}$ and the remainder of which are cycles.

Let $H$ be the component of $L$ which is a good $t$-chain, and let $C$ be a component of $L$ such that $C$ is a cycle and $C$ contains the pure edge if $H$ does not. Let $H_1$ and $H_t$ be the end cycles of $H$ where $H_1$ contains the pure edge if $H$ does and the link vertex of $H_1$ is in $W$ if $t\geq 3$.

\noindent {\bf Case 1.} Suppose that either $t\geq 3$ or it is the case that $t=2$, $H_1$ contains a pure edge and the link vertex of $H$ is in $W$. Then let $x$ and $y$ be vertices such that $x\in V(H_t)$, $x$ is not a link vertex of $H$, $y\in V(C)$, $x,y\in W$ if $t$ is odd, and $x,y\in U$ if $t$ is even. Let $\P'$ be a repacking of $\P$ obtained by performing an $(x,y)$-switch with origin in $H_t$. The reduced leave of $\P'$ has exactly $k-1$ components, $k-2$ of which are cycles and one of which is a good $t'$-chain, where $t'=t+1$ if the terminus of the switch is also in $H_t$ and $t'=t$ otherwise. So we are finished by our inductive hypothesis.

\noindent {\bf Case 2.} Suppose that $t=2$ and $H$ contains no pure edge. Then $C$ contains the pure edge.
Let $x_1$ and $x_2$ be vertices such that $x_1\in V(C) \cap W$, $x_2\in V(H_1) \cap W$, and $x_2$ is not the link vertex of $H$. Let $\P'$ be a repacking of $\P$ obtained by performing an $(x_1,x_2)$-switch with origin in $H_1$ and let $L'$ be the reduced leave of $\P'$. If the terminus of this switch is in $C$, then $L'$ has exactly $k-1$ components, $k-2$ of which are cycles and one of which is a $2$-chain. In this case we are finished by our inductive hypothesis. Otherwise the terminus of this switch is in $H_1$ and $L'$ has exactly $k-1$ components, $k-2$ of which are cycles and one of which is a $3$-chain $H'$ such that one end cycle of $H'$ contains a pure edge and has its link vertex in $W$. If $H'$ is good, then we are again finished by our inductive hypothesis. Otherwise, it must be that both link vertices of $H'$ are in $W$ and we proceed as follows.

Let $H'_1$ and $H'_3$ be the end cycles of $H'$ where $H'_1$ has the pure edge.
Let $y_1,y_2\in W$ be vertices such that $y_1$ is the link vertex in $H'_3$ and $y_2 \notin V(L')$ (note that $y_2$ exists by Lemma~\ref{Lemma:Isolates}(ii)). Let $\P''$ be a repacking of $\P'$ obtained by performing a $(y_1,y_2)$-switch with origin in $H'_3$ and let $L''$ be the reduced leave of $\P''$. If the terminus of this switch is not in $H'_3$, then $L''$ has exactly $k-1$ components, $k-2$ of which are cycles and one of which is a $2$-chain. In this case we are finished by our inductive hypothesis. Otherwise, the terminus of this switch is in $H'_3$ and $L''$ has exactly $k$ components, $k-1$ of which are cycles and one of which is a $2$-chain that contains a pure edge and has its link vertex in $W$. In this case we can proceed as we did in Case 1.

\noindent {\bf Case 3.} Suppose that $t=2$, $H_1$ contains the pure edge and the link vertex of $H$ is in $U$. Let $x$ be the link vertex of $H$ and let $y$ be a vertex in $V(C) \cap U$. Let $\mathcal{P}'$ be a repacking of $\P$ obtained by performing an $(x,y)$-switch with origin in $H_2$ and let $L'$ be the reduced leave of $\mathcal{P}'$. If the terminus of this switch is in $C$ or $H_1$, then $L'$ has exactly $k-1$ components, $k-2$ of which are cycles and one of which is a $2$-chain. In this case we are finished by our inductive hypothesis.
Otherwise the terminus of this switch is in $H_2$ and $L'$ has exactly $k$ components, $k-1$ of which are cycles and one of which is a $2$-chain that contains no pure edges. In this case we can proceed as we did in Case 2.
\end{proof}

\subsection{Proof of Lemma~\ref{Lemma:GeneralJoining}}\label{Subsection:Joining}

Here we use Lemmas~\ref{Lemma:BipartiteJoining}, \ref{Lemma:DW1Degree4Vertex} and \ref{Lemma:1Degree4Vertex} to prove Lemma~\ref{Lemma:GeneralJoining}. We first require two more simple results. Lemma~\ref{Lemma:MaxComponents} is an easy bound on the maximum number of components in the reduced leave of a packing, and Lemma~\ref{Lemma:PickApart} allows us to find a repacking whose reduced leave is a vertex-disjoint union of a single $2$-chain and a collection of cycles.

\begin{lemma}\label{Lemma:MaxComponents}
Let $U$ and $W$ be disjoint sets with $|U|$ odd and $|W|$ even. If $G$ is a subgraph of $K_{U\cup W}-K_U$ such that $G$ contains $\mu$ pure edges, $G$ has one vertex of degree $4$, and each other vertex of $G$ has degree $2$, then $G$ has at most $\left\lfloor\frac{1}{4}(|E(G)|+\mu)\right\rfloor-1$ components.
\end{lemma}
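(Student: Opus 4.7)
The plan is to identify the structure of $G$ forced by its degree sequence and then bound the number of components of $G$ by a simple counting argument in which each component contributes at least some fixed amount to the quantity $|E(G)|+\mu$.

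First I would observe that the component of $G$ containing the unique vertex $v$ of degree $4$ is necessarily a $2$-chain: an Eulerian circuit of this component visits $v$ exactly twice and every other vertex exactly once, so splitting the circuit at $v$ produces two edge-disjoint closed walks meeting only at $v$, that is, two edge-disjoint cycles sharing only the vertex $v$. All other components consist entirely of vertices of degree $2$ and so are cycles. Hence $G$ decomposes into exactly one $2$-chain $H$ together with exactly $k-1$ cycle components.

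The key counting step is the inequality $|E(C)| + p_C \geq 4$ for any cycle $C$ in $K_{U\cup W}-K_U$ with $p_C$ pure edges. Writing $|E(C)| = p_C + b_C$, where $b_C$ is the number of cross edges of $C$, one has $|E(C)| + p_C = 2p_C + b_C$, and as already observed in the preliminaries, $b_C$ must be even because $C$ is an even subgraph. A quick case split handles the rest: if $p_C = 0$, then $|E(C)| = b_C \geq 3$ combined with $b_C$ even forces $b_C \geq 4$; if $p_C = 1$, then $|E(C)| = 1 + b_C \geq 3$ with $b_C$ even forces $b_C \geq 2$ and hence $2p_C + b_C \geq 4$; and if $p_C \geq 2$, the bound follows immediately from $2p_C \geq 4$.

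Applying this inequality to each of the two cycles comprising $H$, whose edge sets and pure-edge counts partition those of $H$, yields $|E(H)| + p_H \geq 8$, and applying it to each of the $k-1$ cycle components of $G$ gives a contribution of at least $4$ from each. Summing over all components produces
\[
|E(G)| + \mu \;\geq\; 8 + 4(k-1) \;=\; 4(k+1),
\]
so $k+1 \leq \tfrac{1}{4}(|E(G)|+\mu)$, and since $k+1$ is an integer, $k \leq \lfloor \tfrac{1}{4}(|E(G)|+\mu)\rfloor - 1$, as required. The only delicate point is the $p_C = 1$ subcase of the cycle inequality, which hinges on the parity fact that every cycle has an even number of cross edges; once this is in hand the argument is essentially just bookkeeping.
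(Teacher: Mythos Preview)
Your proof is correct and follows essentially the same strategy as the paper's: decompose $G$ into cycles, observe that the degree-$4$ component accounts for two of them, and bound the number of cycles via a counting argument. The only cosmetic difference is that the paper phrases the count as ``at most $\mu$ cycles have length $3$, the rest have length at least $4$'' (giving $|E(G)|\geq 4|\mathcal D|-\mu$), whereas you establish the equivalent per-cycle inequality $|E(C)|+p_C\geq 4$ and sum; both yield $|E(G)|+\mu\geq 4(k+1)$.
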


\begin{proof}
Because each vertex of $G$ has even degree, $G$ has a decomposition $\mathcal{D}$ into cycles. Since there are $\mu$ pure edges in $G$, at most $\mu$ cycles in $\mathcal{D}$ have length 3 and each other cycle in $\mathcal{D}$ has length at least $4$. Thus $|E(G)| \geq 4(|\mathcal{D}|-\mu)+3\mu$ which implies  $|\mathcal{D}| \leq \left\lfloor\frac{1}{4}(|E(G)|+\mu)\right\rfloor$. At least one component of $G$ contains a vertex of degree 4 and hence contains at least two cycles, and each component of $G$ contains at least one cycle. The result follows.
\end{proof}

We will use the following notation in Lemma \ref{Lemma:PickApart} and in the proof of Lemma~\ref{Lemma:GeneralJoining}. For an $(M)$-packing $\P$ of a graph $G$ we define $$d(\P)=\mfrac{1}{2}\medop\sum_{x\in V(L)} (\deg_L(x)-2),$$ where $L$ is the reduced leave of $\P$.

\begin{lemma}\label{Lemma:PickApart}
Let $U$ and $W$ be disjoint sets with $|U|$ odd and $|W|$ even, let $M$ be a list of integers, and let $\mu\in\{1,2\}$.
Suppose there exists an $(M)$-packing $\P$ of $K_{U\cup W}-K_U$ with a reduced leave $L$ such that $|E(L)|\leq \min(2(|U|+2), 2|W|+1,|U|+|W|)$, $L$ has $k$ components, $L$ has exactly $\mu$ pure edges, and $L$ has at least one vertex of degree at least $4$. Then there exists a repacking $\P'$ of $\P$ with a reduced leave $L'$ such that exactly one vertex $x'$ of $L'$ has degree $4$, every other vertex of $L'$ has degree $2$, and $L'$ has at most $k+d(\P)-1$ components. Furthermore if $|E(L)|\leq 2|U|+2$ and there is a vertex in $W$ with degree at least $4$ in $L$, then $x'$ is in $W$.
\end{lemma}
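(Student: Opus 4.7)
The plan is to iteratively apply Lemma~\ref{Lemma:EqualiseOneStep}, each application decreasing $d(\P)$ by exactly one, until $d(\P) = 1$. First observe that summing degrees yields $d(\P) = |E(L)| - |V(L)|$, and because $L$ is even with every vertex of $V(L)$ having degree at least $2$, the condition $d(\P) = 1$ is equivalent to $L$ having exactly one vertex of degree $4$ with all others of degree $2$. The key step is: if $y$ and $z$ are twin in $K_{U\cup W}-K_U$ with $\deg_L(y) \geq 4$ and $z \notin V(L)$, then Lemma~\ref{Lemma:EqualiseOneStep} produces a repacking with leave $L''$ satisfying $\deg_{L''}(y) = \deg_L(y)-2 \geq 2$, $\deg_{L''}(z) = 2$, all other degrees unchanged, and at most one extra nontrivial component. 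Hence $|E(L'')|=|E(L)|$, $|V(L'')|=|V(L)|+1$, and so $d$ drops by exactly one.

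For the main statement, I would induct on $d(\P)$; the base case $d(\P) = 1$ is immediate. When $d(\P) \geq 2$, we have $\sum_{x \in V(L)}(\deg_L(x)-2) \geq 4$, so $L$ contains either two vertices of degree $4$ or one vertex of degree at least $6$. Combined with the size hypothesis $|E(L)| \leq \min(2(|U|+2), 2|W|+1, |U|+|W|)$, Lemma~\ref{Lemma:Isolates}(iii) supplies twin vertices $y, z$ with $\deg_L(y) \geq 4$ and $z \notin V(L)$; apply the key step and invoke the inductive hypothesis. After $d(\P) - 1$ switches the resulting leave has the required degree structure, and since each switch adds at most one component, we finish with at most $k + d(\P) - 1$ components.

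For the ``Furthermore'' clause, assume $|E(L)| \leq 2|U|+2$ and some $w_0 \in W$ has $\deg_L(w_0) \geq 4$. I would order the switches in two phases so as to ensure the surviving degree-$4$ vertex lies in $W$. In Phase~1, while some vertex of $U$ has degree $\geq 4$ in the current leave, Lemma~\ref{Lemma:Isolates}(i) (applicable because $|E(L)| \leq 2(|U|+1)$) supplies an isolated vertex of $U$, and we perform the key step using a twin pair inside $U$; since this fixes all degrees in $W$, the vertex $w_0$ continues to witness the ``Furthermore'' hypothesis throughout. Once Phase~1 ends, all excess degree lies in $W$; in Phase~2, while $W$ contains two degree-$4$ vertices or one of degree $\geq 6$, Lemma~\ref{Lemma:Isolates}(ii) with $S = W$ (applicable because $|E(L)| \leq 2|W|+1$ from the main hypothesis) supplies an isolated vertex of $W$, and we perform the key step using a twin pair inside $W$. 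The iteration terminates when exactly one $W$-vertex of degree $4$ remains, giving the required $x' \in W$, and the component bound holds as before.

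The main obstacle is the bookkeeping required to confirm that the hypotheses of Lemma~\ref{Lemma:Isolates} remain valid throughout the iteration. This is essentially automatic because $|E(L)|$ is invariant under the switches of Lemma~\ref{Lemma:EqualiseOneStep}, and the required degree conditions at each step follow from the current values of $d(\P)$, or its restrictions to $U$ and $W$ in the second argument.
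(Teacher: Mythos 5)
Your proposal is correct and follows essentially the same route as the paper: induction on $d(\P)$ via repeated application of Lemma~\ref{Lemma:EqualiseOneStep}, with Lemma~\ref{Lemma:Isolates}(iii) supplying the twin pair in general and Lemma~\ref{Lemma:Isolates}(i)/(ii) justifying the $U$-first, then-$W$ ordering that secures the ``Furthermore'' clause (the paper phrases this as a preference rule inside a single induction rather than two explicit phases, but the argument is the same).
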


\begin{proof} The proof is by induction on $d(\P)$. Because $L$ has at least one vertex of degree at least $4$, $d(\P) \geq 1$. If $d(\P) = 1$, then we are finished immediately because one vertex of $L$ has degree $4$ and every other vertex has degree $2$. So suppose that $d(\P) \geq 2$ and hence that $L$ contains either at least two vertices of degree $4$ or at least one vertex of degree at least $6$.

Let $\P''$ be the repacking of $\P$ obtained by applying Lemma \ref{Lemma:EqualiseOneStep} with $y$ and $z$ chosen to be vertices in $U\cup W$ such that $\deg_L(y) \geq 4$, $z \notin V(L)$, $y,z \in U$ if such vertices exist in $U$, and $y,z \in W$ otherwise. These choices for $y$ and $z$ exist by Lemma~\ref{Lemma:Isolates}(iii), and by Lemma~\ref{Lemma:Isolates}(i) they will be in $U$ unless $|E(L)| > 2|U|+2$ or $\deg_L(x)=2$ for all $x\in V(L)\cap U$. Note that $d(\P'')=d(\P)-1$ and the reduced leave of $\P''$ has at most $k+1$ components. Thus we can complete the proof by applying our inductive hypothesis.
\end{proof}

\begin{proof}[{\bf Proof of Lemma~\ref{Lemma:GeneralJoining}.}]
Note first that $m_1+m_2+h \equiv \mu \mod{2}$. If $\mu=0$, then the result follows by Lemma~\ref{Lemma:BipartiteJoining}. So suppose $\mu\in\{1,2\}$.

Let $U$ and $W$ be disjoint sets of sizes $u$ and $w$ and let $\P$ be a packing of $K_{U \cup W}-K_U$ satisfying the hypotheses of the lemma. Let $L$ be the reduced leave of $\P$ and let $k$ be the number of components of $L$ (note that $k\leq 3$). Below we will sometimes wish to apply Lemma~\ref{Lemma:DW1Degree4Vertex} or \ref{Lemma:1Degree4Vertex} with $m=h$ and $m'=m_1+m_2$. Accordingly, we note that if $h=3$ then $m_1+m_2+h \leq 2u+\mu$ because $m_1+m_2\leq 3h$, $m_1+m_2+h \equiv \mu \mod{2}$ and $u\geq 5$. We also note that if $\mu=2$ then $m_1+m_2+h \leq 2w$ because $m_1+m_2+h \equiv \mu \mod{2}$.

Let $C_1$, $C_2$ and $H$ be edge-disjoint cycles in $L$ such that $V(H)=h$ and $|V(C_1)|+|V(C_2)|=m_1+m_2$ (we do not assume that $|V(C_1)|=m_1$ and $|V(C_2)|=m_2$).

\noindent \textbf{Case 1.}
Suppose that $k=3$. Then the components of $L$ are $C_1$, $C_2$ and $H$. Let $x$ and $y$ be vertices such that $x\in V(C_1)\cap W$ and $y\in V(C_2)\cap W$. By performing an $(x,y)$-switch we obtain a repacking of $\P$ whose reduced leave is either the edge-disjoint union of an $h$-cycle and an $(m_1+m_2)$-cycle or the vertex-disjoint union of an $h$-cycle and a $2$-chain of size $m_1+m_2$ with link vertex in $W$. In the former case we are finished and in the latter case we apply Lemma~\ref{Lemma:DW1Degree4Vertex} (if $\mu=2$) or Lemma~\ref{Lemma:1Degree4Vertex} (if $\mu=1$) with $m=h$ and $m'=m_1+m_2$.

\noindent \textbf{Case 2.}
Suppose that $k\in\{1,2\}$, that $(k,d(\P))\neq (1,m_1+m_2)$
and that $W$ contains a vertex of degree at least $4$ in $L$ if $h=3$.
Note that $L$ must have a vertex of degree at least $4$. Applying Lemma~\ref{Lemma:PickApart} to $\P$, we see that there is a repacking of $\P$ whose reduced leave $L'$ is the vertex-disjoint union of a $2$-chain and $k'-1$ cycles for some $k' \leq k+d(\P)-1$. Furthermore, the link vertex of the $2$-chain is in $W$ if $h=3$.
If we can show that $h,m_1+m_2\geq k'+1$, then we can complete the proof by applying Lemma~\ref{Lemma:DW1Degree4Vertex} or Lemma~\ref{Lemma:1Degree4Vertex} with $m=h$ and $m'=m_1+m_2$.

\noindent \textbf{Case 2a.}
Suppose further that $h\leq m_1+m_2$. Then it is sufficient to show that $h\geq k' +1$.
By Lemma~\ref{Lemma:MaxComponents}, $k' +1 \leq \lfloor\frac{m_1+m_2+h+\mu}{4}\rfloor$. Because $m_1+m_2\leq 3h$ and $\mu\leq 2$, we have $\lfloor\frac{m_1+m_2+h+\mu}{4}\rfloor\leq h$. So $h\geq k' +1$ as required.

\noindent \textbf{Case 2b.}
Suppose further that $h> m_1+m_2$. Then it is sufficient to show that $m_1+m_2\geq k' +1$.
If $k=2$ then $d(\P)\leq \max (m_1,m_2)$ and if $k=1$ then $d(\P)\leq m_1+m_2$.
So, because we have assumed that $(k,d(\P)) \neq (1,m_1+m_2)$, we have $k+d(\P)\leq m_1+m_2$. Thus $m_1+m_2\geq k+d(\P) \geq k' +1$ as required.

\noindent \textbf{Case 3.}
Suppose that $(k,d(\P))= (1,m_1+m_2)$. Then $W$ contains more than one vertex of degree at least $4$ in $L$. Also $L$ has no cut vertex because $L$ has $h$ vertices and contains an $h$-cycle.

Let $x$ and $y$ be twin vertices in $K_{U\cup W}-K_U$ such that $\deg_L(x)\geq 4$ and $y\notin V(L)$ (such vertices exist by Lemma~\ref{Lemma:Isolates}(iii)). Then let $\P^*$ be a repacking of $\P$ obtained by performing an $(x,y)$-switch and let $k^*$ be the number of components in the reduced leave of $\P^*$. Note that $k^*=1$ because $L$ has no cut vertex and $d(\P^*)=d(\P)-1=m_1+m_2-1$. Now we can proceed as we did in Case 2 (note that our argument in Case 2 did not depend upon $L$ being the edge-disjoint union of an $h$-cycle, an $m_1$-cycle and an $m_2$-cycle).

\noindent \textbf{Case 4.}
Suppose that $k\in\{1,2\}$, $h=3$ and that each vertex in $V(L)\cap W$ has degree $2$ in $L$.
Then $m_1+m_2\leq 9$ and it must be that $m_1=m_2=4$ if $\mu=1$ and $\{m_1,m_2\}\in\{\{3,4\},\{3,6\},\{5,4\}\}$ if $\mu=2$.

\noindent \textbf{Case 4a.}
Suppose further that $C_1$ and $C_2$ are vertex disjoint. Let $x \in V(C_1) \cap W$ and $y \in V(C_2) \cap W$, and let $\P'$ be the repacking of $\P \cup \{H\}$ obtained by performing an $(x,y)$-switch with origin in $C_1$. If the terminus of this switch is in $C_2$, then the reduced leave of $\P'$ is an $(m_1+m_2)$-cycle and we can remove from $\P'$ a $3$-cycle that contains exactly one pure edge to complete the proof. If the terminus of this switch is in $C_1$, then the reduced leave of $\P'$ is an $(m_1,m_2)$-chain with link vertex in $W$ and we can remove from $\P'$ a $3$-cycle that contains exactly one pure edge and then proceed as in Case 2.

\noindent \textbf{Case 4b.}
Suppose further that $C_1$ and $C_2$ share at least one vertex (in $U$). By applying Lemma \ref{Lemma:EqualiseOneStep} once or twice to $\P \cup \{H\}$, we can obtain a repacking $\P'$ of $\P \cup \{H\}$ whose reduced leave $L'$ is $2$-regular. Note that $L'$ is either an $(m_1+m_2)$-cycle or
the vertex-disjoint union of two cycles whose lengths add to $m_1+m_2$. In either case we remove from $\P'$ a $3$-cycle $H'$ that contains one exactly pure edge. In the former case we are finished immediately and in the latter case we can proceed as in Case 1, 2 or 4a, depending on $V(L')\cap V(H')$. (If $V(L')\cap V(H')=\emptyset$ then proceed as in Case 1, if $W$ contains a vertex of degree $4$ in $L'\cup H'$ then proceed as in Case 2, and otherwise proceed as in Case 4a.)
\end{proof}

We conclude this section with the following result which will be used in the proof of Lemma~\ref{Lemma:BaseDecomp_ManyCrossOdds_small_m} to obtain two cycles from a leave with a vertex of degree at least $4$.

\begin{lemma}\label{Lemma:1Deg4Vertex_rearrange}
Let $U$ and $W$ be disjoint sets with $|U|$ odd and $|W|$ even, let $M$ be a list of integers, and let $\mu\in\{1,2\}$. Let $m$ and $m'$ be positive integers such that $m$ is odd,  $m, m' \geq \max(\lfloor \frac{1}{4}(m+m')+\mu  \rfloor,3)$, $m+m'\leq\min(2|U|+4, 2|W|+1,|U|+|W|)$, and $m+m'\leq 2(|U|+1)$ if $3\in\{m,m'\}$.
Suppose there exists an $(M)$-packing $\P$ of $K_{U\cup W}-K_U$ with a reduced leave $L$ of size $m+m'$ such that $L$ has exactly $\mu$ pure edges, $L$ has at least one vertex of degree at least $4$ and, if $3\in\{m,m'\}$, there is a vertex of degree at least $4$ in $V(L)\cap W$.
Then there exists a repacking of $\P$ whose reduced leave is the edge-disjoint union of an $m$-cycle and an $m'$-cycle.
\end{lemma}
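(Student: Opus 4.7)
My plan is to first apply Lemma~\ref{Lemma:PickApart} to convert $L$ into a more standardised form, namely the vertex-disjoint union of a single $2$-chain and some additional cycles, and then to finish by invoking Lemma~\ref{Lemma:DW1Degree4Vertex} (when $\mu=2$) or Lemma~\ref{Lemma:1Degree4Vertex} (when $\mu=1$) with chain-length parameter $t=2$.

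All hypotheses of Lemma~\ref{Lemma:PickApart} will be satisfied: $|E(L)|=m+m'\leq\min(2(|U|+2),2|W|+1,|U|+|W|)$, $L$ has exactly $\mu\in\{1,2\}$ pure edges, and $L$ has a vertex of degree at least $4$. Applying the lemma yields a repacking $\P'$ whose reduced leave $L'$ has exactly one vertex $x'$ of degree $4$ and every other vertex of degree $2$; the component of $L'$ containing $x'$ is then a $2$-chain with link vertex $x'$ (which is automatically good, as every $2$-chain is), and all other components are cycles. Moreover, when $3\in\{m,m'\}$, the hypothesis $m+m'\leq 2(|U|+1)=2|U|+2$ together with the assumed existence of a degree-at-least-$4$ vertex in $V(L)\cap W$ triggers the final clause of Lemma~\ref{Lemma:PickApart} and forces $x'\in W$. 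So in every case the resulting $2$-chain avoids being a $2$-chain with link vertex in $U$ when $3\in\{m,m'\}$.

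Let $k'$ denote the number of components of $L'$. Because $L'$ has $\mu$ pure edges, one vertex of degree $4$ and the rest of degree $2$, Lemma~\ref{Lemma:MaxComponents} gives
\[
k' \leq \left\lfloor\tfrac{1}{4}(m+m'+\mu)\right\rfloor-1,
\]
whence $k'+1\leq\lfloor\tfrac{1}{4}(m+m'+\mu)\rfloor\leq\lfloor\tfrac{1}{4}(m+m')+\mu\rfloor$ (using $\mu\leq 4\mu$). Combining this with the standing hypothesis $m,m'\geq\max(\lfloor\tfrac{1}{4}(m+m')+\mu\rfloor,3)$ yields $m,m'\geq\max(k'+t-1,3)$ with $t=2$, which is exactly the bound demanded by the two target lemmas.

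It then remains to apply whichever of Lemma~\ref{Lemma:DW1Degree4Vertex} or Lemma~\ref{Lemma:1Degree4Vertex} fits $\mu$ to $\P'$. In the $\mu=2$ case, $m+m'$ is even, so $m+m'\leq 2|W|+1$ tightens to $m+m'\leq 2|W|$, while $m+m'\leq 2|U|+4$ is already in the required form; Lemma~\ref{Lemma:DW1Degree4Vertex} then produces a repacking whose reduced leave is the edge-disjoint union of an $m$-cycle and an $m'$-cycle. In the $\mu=1$ case, $m+m'$ is odd, so $m+m'\leq 2|U|+4$ tightens to $m+m'\leq 2|U|+3$, and $m+m'\leq 2(|U|+1)$ tightens to $m+m'\leq 2|U|+1$ when $3\in\{m,m'\}$; Lemma~\ref{Lemma:1Degree4Vertex} completes the argument. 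I expect the main obstacle to be purely bookkeeping: verifying that the slightly different size inequalities across Lemmas~\ref{Lemma:PickApart}, \ref{Lemma:DW1Degree4Vertex} and \ref{Lemma:1Degree4Vertex} all reconcile correctly after invoking the parity of $m+m'$ and, in particular, arranging $x'\in W$ in the delicate case $3\in\{m,m'\}$.
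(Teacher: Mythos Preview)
Your proposal is correct and is essentially the paper's own argument: the paper also reduces to a single degree-$4$ vertex via Lemma~\ref{Lemma:PickApart}, bounds the component count with Lemma~\ref{Lemma:MaxComponents}, and then invokes Lemma~\ref{Lemma:DW1Degree4Vertex} or Lemma~\ref{Lemma:1Degree4Vertex}. The only cosmetic difference is that the paper splits off the case where $L$ already has exactly one degree-$4$ vertex before applying Lemma~\ref{Lemma:PickApart}, whereas you fold that case into the same application (Lemma~\ref{Lemma:PickApart} acts trivially when $d(\P)=1$); your parity bookkeeping to reconcile the size bounds is exactly what is needed.
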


\begin{proof}
Note that $m+m' \equiv \mu \mod{2}$. The proof splits into two cases.

\noindent {\bf Case 1.} Suppose that $L$ has exactly one vertex $x$ of degree $4$ and every other vertex of $L$ has degree $2$.
Note that, by the hypotheses of the lemma, $x$ is in $V(L)\cap W$ if $3\in\{m,m'\}$.
Then $L$ is the vertex-disjoint union of a $2$-chain and $k-1$ cycles, where $k$ is the number of components in $L$. So the result follows by Lemma~\ref{Lemma:1Degree4Vertex} (if $\mu=1$) or Lemma~\ref{Lemma:DW1Degree4Vertex} (if $\mu=2$). (Note that $m,m'\geq k+1$ since $k\leq \lfloor \frac{1}{4}(m+m')+\mu  \rfloor-1$ by Lemma~\ref{Lemma:MaxComponents}.)

\noindent {\bf Case 2.} Suppose that $L$ has at least two vertices of degree at least $4$, or one vertex of degree at least $6$.
Let $\P'$ be the repacking of $\P$ obtained by applying Lemma~\ref{Lemma:PickApart}, and let $L'$ be the reduced leave of $\P'$. Then $L'$ has exactly one vertex of degree $4$, every other vertex of $L'$ has degree $2$, and there is a vertex of degree $4$ in $V(L)\cap W$ if $3\in\{m,m'\}$.
We can proceed as in Case 1.
\end{proof}

\section{Base Decompositions}\label{Section:BaseDecomps}
For a nonnegative integer $i$, let $(x^i)$ denote a list of $i$ entries all equal to $x$. For a list $X=(x_1,\ldots,x_n)$, let $\sum X=\sum_{i=1}^n x_i$. For a list $X$ and a sublist $Y$ of $X$, let $X\setminus Y$ be the list obtained from $X$ by removing the entries of $Y$. For a real number $x$ we denote the greatest even integer less than or equal to $x$ by $\efloor{x}$ and the least even integer greater than or equal to $x$ by $\eceil{x}$. For technical reasons in the remainder of the paper we shall consider a $0$-cycle to be a trivial graph with no vertices or edges. Because we can add any number of $0$-cycles to a packing without altering its leave, we shall not distinguish between packings that differ only in their number of $0$-cycles nor between lists that differ only in their number of 0s.

The aim of this section is to prove Lemmas~\ref{Lemma:BaseDecomp_FewCrossOdds}, \ref{Lemma:BaseDecomp_ManyCrossOdds_large_m} and \ref{Lemma:BaseDecomp_ManyCrossOdds_small_m}. These lemmas share a common form. Under various technical conditions, they guarantee the existence of an $(N,3^a,4^b,5^c,6^d,k)$-decomposition of $K_{u+w}-K_u$ that includes cycles with lengths $(3^a,4^{b},5^c,6^{d},k)$ that each contain at most one pure edge (where $k$ is even and perhaps 0). In order to prove Theorem~\ref{Theorem:MainTheorem} we will then take a base decomposition provided by one of these lemmas and repeatedly apply Lemma~\ref{Lemma:GeneralJoining} to produce a desired $(M)$-decomposition of $K_{u+w}-K_u$. Very roughly speaking, Lemma~\ref{Lemma:BaseDecomp_FewCrossOdds} will be used when $M$ has few odd entries, Lemma~\ref{Lemma:BaseDecomp_ManyCrossOdds_large_m} will be used when $M$ has many large entries, and \ref{Lemma:BaseDecomp_ManyCrossOdds_small_m} will be used when $M$ has few large entries.

In essence, Lemmas~ \ref{Lemma:BaseDecomp_ManyCrossOdds_large_m} and \ref{Lemma:BaseDecomp_ManyCrossOdds_small_m} are proved as follows. Consider $K_{u+w}-K_u$ as $K_{U,W} \cup K_W$, where $U$ and $W$ are disjoint sets of sizes $u$ and $w$. For some entry $m$ of $N$, we use the main result of \cite{BryHorPet14} to find an $(N \setminus (m))$-packing $\P$ of $K_W$ whose leave $L$ has size $a+c+m-t$, where $t=uw-(2a+4b+4c+6d)$, $t=0$ if $m=0$ and $t \in \{2,\ldots,m-2\}$ if $m > 0$. We then use various other results to find a $(3^a,4^{b},5^c,6^{d},k,m)$-decomposition of $K_{U,W} \cup L$ such that one cycle of length $m$ contains $m-t$ edges of $L$ and each other cycle contains one edge of $L$ if it has odd length and no edges of $L$ if it has even length. Lemma~\ref{Lemma:BaseDecomp_FewCrossOdds} is proved similarly except that we consider $K_{u+w}-K_u$ as $K_{U \setminus U_1,W} \cup K_{W \cup U_1}$, where $U$ and $W$ are disjoint sets of sizes $u$ and $w$ and $U_1 \subseteq U$ with $|U_1|=1$.

We will make use of three existing results on cycle decompositions of complete graphs and complete bipartite graphs. Theorem~\ref{Theorem:Alspach} is the main result of \cite{BryHorPet14}, and Theorems~\ref{Theorem:Bipartite} and \ref{Theorem:ChouFuHuang_no8} are special cases of the main results of \cite{Horsley12} and \cite{ChoFuHua99}.

\begin{theorem}[\cite{BryHorPet14}]\label{Theorem:Alspach}
Let $v$ be an integer and let $m_1,\ldots,m_\tau$ be a list of integers. There exists an $(m_1,\ldots,m_\tau)$-decomposition of $K_v$ if and only if $v$ is odd, $3\leq m_i\leq v$ for $i\in\{1,\ldots,\tau\}$, and $m_1+\cdots+m_\tau=\binom{v}{2}$.
There exists an $(m_1,\ldots,m_\tau)$-decomposition of $K_v-I$, where $I$ is a $1$-factor with vertex set $V(K_v)$, if and only if $v$ is even, $3\leq m_i\leq v$ for $i\in\{1,\ldots,\tau\}$, and $m_1+\cdots+m_\tau=\binom{v}{2}-\frac{v}{2}$.
\end{theorem}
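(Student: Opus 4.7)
The plan is to split into necessity and sufficiency, with the latter being the substantive part. For necessity, each vertex of $K_v$ has degree $v-1$ and each vertex of $K_v-I$ has degree $v-2$; these must be even (since any graph decomposable into cycles is even), which forces the stated parity on $v$. The bounds $3\leq m_i\leq v$ are immediate (a cycle in the graph has at least $3$ and at most $v$ vertices), and the sum condition is a straightforward edge count, using $|E(K_v)|=\binom{v}{2}$ and $|E(K_v-I)|=\binom{v}{2}-\tfrac{v}{2}$.

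For sufficiency I would imitate the philosophy of the present paper: start from decompositions into cycles of a small number of distinct lengths (for which the problem is already solved) and then merge pairs of cycles into longer ones by local swapping arguments. Concretely, I would first invoke the known solution of the uniform case, namely the Alspach--Gavlas/\v{S}ajna theorem that for each $m$ with $3\leq m\leq v$ and $m\mid\binom{v}{2}$ (respectively $m\mid\binom{v}{2}-\tfrac{v}{2}$), the graph $K_v$ (respectively $K_v-I$) has a decomposition into $m$-cycles. From such a uniform decomposition, or more generally a two-length decomposition, one can try to combine two cycles of lengths $p$ and $q$ into a single cycle of length $p+q$ (or break one cycle of length $p+q$ into cycles of lengths $p$ and $q$) by executing a sequence of transpositions on twin vertices (a ``switching''), analogous to Lemma~\ref{Lemma:CycleSwitch} here but for $K_v$ or $K_v-I$.

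My overall scheme would be by induction on $\tau$, or equivalently on the multiset $\{m_1,\ldots,m_\tau\}$ with respect to a suitable well-ordering that rewards increasing the number of distinct lengths. The induction step would take a list of lengths, isolate two lengths $m_i, m_j$ for which there already exists a decomposition containing a cycle of length $m_i+m_j$ (which we know exists by a uniform/near-uniform base case), and then apply a merging lemma to replace that cycle by an $m_i$-cycle together with an $m_j$-cycle. The hard part is the merging lemma: we must show that, given a packing and a specified long cycle $C$ of length $m_i+m_j$, we can always reach, by a bounded number of switches, a packing in which $C$ has been split into cycles of the prescribed lengths $m_i$ and $m_j$. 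This is where obstructions arise, particularly when $m_i$ or $m_j$ is $3$, when $m_i+m_j$ is close to $v$, or when the parities of $m_i$ and $m_j$ force constraints on odd and even cycles; these forced exceptional cases require constructing ad hoc small decompositions by hand.

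The anticipated main obstacle is exactly this merging/splitting step in the edge cases: ensuring one can always find a switch that achieves the desired split without destroying another cycle of the packing, and handling the small values of $v$ separately by a finite computation or direct construction. Once the merging lemma is in hand, the induction goes through by starting from a uniform-length base decomposition guaranteed by Alspach--Gavlas/\v{S}ajna and then repeatedly splitting a long cycle into two cycles of target lengths until the multiset $\{m_1,\ldots,m_\tau\}$ is realised.
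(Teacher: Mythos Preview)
The paper does not contain a proof of this theorem: it is quoted verbatim as a result of Bryant, Horsley and Pettersson \cite{BryHorPet14} and is used purely as a black box in constructing the base decompositions (Lemmas~\ref{Lemma:BaseDecomp_FewCrossOdds}, \ref{Lemma:BaseDecomp_ManyCrossOdds_large_m} and \ref{Lemma:BaseDecomp_ManyCrossOdds_small_m}). There is therefore nothing in the paper to compare your proposal against.

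As to the proposal itself, you are sketching roughly the strategy that \cite{BryHorPet14} actually employs --- a switching lemma for $K_v$ analogous to Lemma~\ref{Lemma:CycleSwitch}, combined with an inductive reduction to base decompositions involving few distinct lengths --- so the philosophy is sound. However, what you have written is a plan rather than a proof: you have not stated the precise merging/splitting lemma you need, nor shown that the induction terminates on any concrete well-ordering, nor identified which base decompositions you start from (the Alspach--Gavlas/\v{S}ajna uniform-length result does not by itself cover all the residue classes you would need). The actual argument in \cite{BryHorPet14} is substantially more intricate than your outline suggests, particularly in controlling the structure of leaves during the merging process and in handling short cycles; filling in those details is the bulk of that paper.
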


\begin{theorem}[{\cite{Horsley12}}]\label{Theorem:Bipartite}
Let $p$ and $q$ be positive integers such that $p$ and $q$ are even and $p\leq q$, and let $m_1,\ldots, m_{\t}$ be even integers such that $4\leq m_1 \leq m_2 \leq\dots\leq m_{\t}$. If
\begin{itemize}
    \item[$(\rm{B}1)$]
$m_1+\dots+m_{\t}= pq$;
    \item[$(\rm{B}2)$]
$m_{\t}\leq 3m_{\t-1}$; and
    \item[$(\rm{B}3)$]
$m_{\t-1}+m_{\t}\leq 2p+2$ if $p<q$ and $m_{\t-1}+m_{\t}\leq 2p$ if $p=q$;
\end{itemize}
then there exists an $(m_1,\ldots, m_{\t})$-decomposition of $K_{p,q}$.
\end{theorem}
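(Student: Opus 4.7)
The plan is to adapt the merging strategy developed later in this paper to the simpler bipartite setting of $K_{p,q}$, where every cycle is automatically even and the pure/cross edge distinction disappears.

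First I would construct a base decomposition of $K_{p,q}$ into short even cycles whose multiset of lengths is a refinement of $(m_1,\ldots,m_\tau)$: each target length $m_i$ equals the sum of several base cycle lengths. Since $p$ and $q$ are both even, $4 \mid pq$, and Sotteau's theorem provides a $4$-cycle decomposition of $K_{p,q}$; perturbing this with a handful of $6$-cycles (again produced by Sotteau's theorem on appropriate subgraphs) lets one match any refinement of the form $(4^{a_i},6^{b_i},r_i)$ of each $m_i$, where $r_i$ is a small even residue. The refinement is designed so that the subsequent merging process never creates intermediate cycles that are too long.

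Second, I would use a bipartite merging lemma: if a packing of $K_{p,q}$ has a reduced leave equal to an $h$-cycle together with an $m$-cycle and an $m'$-cycle satisfying $m+m'\leq 3h$ and $h+m+m'\leq 2\min(p,q)+2$, then it can be repacked so that the reduced leave is an $h$-cycle together with a single $(m+m')$-cycle. This is exactly the pure-bipartite content of Lemma~\ref{Lemma:BipartiteJoining}, proved by a Bryant--Horsley cycle-switching argument (the bipartite analogue of Lemma~\ref{Lemma:CycleSwitch}). Applying this lemma iteratively, one fuses the base cycles belonging to each $m_i$ into a single $m_i$-cycle, processing the smaller targets first and deferring the two longest, $m_{\tau-1}$ and $m_\tau$, to the end. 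Condition (B2), $m_\tau\leq 3m_{\tau-1}$, is what guarantees that at each intermediate merge the hypothesis $m+m'\leq 3h$ can be arranged, since it ensures $m_\tau$ can always be split into two pieces of comparable size, both at most $m_{\tau-1}$. Condition (B3) is exactly the length hypothesis $h+m+m'\leq 2\min(p,q)+2$ at the final, tightest merge.

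The main obstacle is the bipartite merging lemma itself: one must verify that the required sequence of cycle-switches exists for every possible vertex-intersection pattern of the three cycles $h,m,m'$ in $K_{p,q}$. This requires a case analysis analogous to (but simpler than) the one in Section~\ref{Section:JoiningLemma} of the present paper, since there are no pure edges to track. A secondary technical issue is to choose the base decomposition and the order of merges with enough slack so that no intermediate configuration violates the length hypothesis of the merging lemma; the standard device is to always keep a sufficiently long auxiliary cycle on hand to play the role of $h$. The extremal case $p=q$ with $m_{\tau-1}+m_\tau=2p$ is where the bound in (B3) is tight and where the case analysis is most delicate; handling it correctly is what distinguishes this theorem from the weaker Sotteau-type conclusions that follow from straightforward uniform-length constructions.
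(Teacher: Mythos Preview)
This theorem is not proved in the present paper at all: it is quoted verbatim as a known result from \cite{Horsley12} and used as a black box (see Section~\ref{Section:BaseDecomps}). There is therefore no proof in the paper to compare your proposal against.

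That said, your outline is a reasonable sketch of how the result in \cite{Horsley12} is actually established, and indeed the present paper's Lemma~\ref{Lemma:BipartiteJoining} explicitly says the bipartite merging step ``can be proved exactly as per the proof of \cite[Lemma~3.6]{Horsley12}''. So your strategy---start from a Sotteau/Chou--Fu--Huang base of $4$- and $6$-cycles, then iteratively merge using a bipartite cycle-switching lemma under the hypotheses (B2) and (B3)---is the right picture. The genuine work, which your proposal acknowledges but does not carry out, lies in the case analysis for the merging lemma and in controlling the intermediate leaves so that the length hypothesis is always met; \cite{Horsley12} handles this via a sequence of technical lemmas (its Lemmas~3.2--3.6) that reduce arbitrary leaves to good $2$-chains before the final merge. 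Your sketch is correct in spirit but is an outline rather than a proof, and for the purposes of this paper the theorem should simply be cited.
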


\begin{theorem}[{\cite{ChoFuHua99}}]\label{Theorem:ChouFuHuang_no8}
Let $b$, $d$, $p$ and $q$ be positive integers such that $p$ and $q$ are even. There exists a $(4^b,6^d)$-decomposition of $K_{p,q}$ if and only if $4b+6d=pq$ and either $p,q \geq 4$ or $d=0$.
\end{theorem}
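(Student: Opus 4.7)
The plan is to establish both directions separately. For necessity, an edge count yields $4b+6d=|E(K_{p,q})|=pq$. If $d\geq 1$ then the decomposition must contain at least one $6$-cycle, and any $6$-cycle in a bipartite graph uses three vertices on each side, so $p\geq 3$ and $q\geq 3$; since $p$ and $q$ are even this forces $p,q\geq 4$.

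For sufficiency, assume $p,q$ are even, $4b+6d=pq$, and either $d=0$ or $p,q\geq 4$. I would first dispose of the two pure cases using the classical theorem of Sotteau on cycle decompositions of complete bipartite graphs. When $d=0$, the equality $4b=pq$ together with $p,q$ even forces $4\mid pq$, so Sotteau's result supplies a $(4^b)$-decomposition of $K_{p,q}$. When $b=0$ and $p,q\geq 4$, the equality $6d=pq$ together with $p,q\geq 3$ and $6\mid pq$ again yields a $(6^d)$-decomposition by the same theorem.

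For the mixed case $b,d\geq 1$ with $p,q\geq 4$, the plan is to induct on $p+q$ by splitting off a complete bipartite subgraph with a pure-length decomposition. Explicitly, I would try to write the larger side $W$ as $W=W_1\cup W_2$ with $|W_1|,|W_2|$ both even and positive, such that $K_{p,|W_1|}$ admits either a $(4^{b_1})$- or a $(6^{d_1})$-decomposition (by Sotteau) and such that the residual graph $K_{p,|W_2|}$ still satisfies the hypotheses of the theorem with parameters $(b-b_1,d-d_1)$. The main obstacle is juggling the divisibility constraints simultaneously: one needs $p|W_1|$ divisible by $4$ or $6$, $|W_2|\geq 4$, and $(b-b_1,d-d_1)$ nonnegative and admissible, and it is not obvious that such a split always exists.

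When no valid split is possible, one is reduced to a finite list of small base cases such as $K_{4,4}$, $K_{4,6}$, $K_{4,8}$, $K_{6,6}$ and $K_{4,10}$, together with the admissible pairs $(b,d)$ for each. These would be handled by exhibiting explicit edge-disjoint families of $4$- and $6$-cycles; for example $K_{4,6}$ with $(b,d)=(3,2)$ or $K_{4,4}$ with $(b,d)=(1,2)$ needs a direct construction, after which the inductive step closes off all larger parameter values.
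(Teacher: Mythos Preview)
The paper does not prove this theorem at all: it is quoted as a known result from \cite{ChoFuHua99} and used as a black box. So there is no ``paper's own proof'' to compare against; you are attempting to reprove a cited result.

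On the merits of your sketch: the necessity direction is correct and complete. For sufficiency, the pure cases via Sotteau are fine (indeed for $4$-cycles, $p,q$ even already forces $4\mid pq$, so that case is immediate). The mixed case, however, is only a plan, and you explicitly acknowledge the gap: you have not shown that a valid split of the larger part into $W_1\cup W_2$ always exists, nor have you actually enumerated and verified the base cases you list. In particular, the divisibility juggling is genuinely the content here --- for instance, when $p=4$ one must absorb all $6$-cycles into columns whose sizes are multiples of $3$, and ensuring enough such columns while keeping the residual admissible requires a case analysis you have not carried out. As written this is a reasonable outline but not a proof; since the paper simply cites the result, the appropriate fix is either to complete the induction and base-case verification, or to cite \cite{ChoFuHua99} as the paper does.
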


The other results in this section are tools that we will use in the proofs of Lemmas~\ref{Lemma:BaseDecomp_FewCrossOdds}, \ref{Lemma:BaseDecomp_ManyCrossOdds_large_m} and \ref{Lemma:BaseDecomp_ManyCrossOdds_small_m}.

\subsection{Preliminary results}

Lemmas \ref{Lemma:Leave_PathDecomp} and \ref{Lemma:Leave_ManyPathDecomp} provide cycle packings of the complete bipartite graph whose leaves have decompositions into two paths of specified lengths. Lemmas \ref{Lemma:PathsAndCycleToDecomp}, \ref{Lemma:BipartiteAndOneCycle} and \ref{Lemma:3s5s} provide cycle packings of the union of the complete bipartite graph with one or more cycles. Lemma \ref{Lemma:4s_to_6s} allows us to decrease the number of $4$-cycles and increase the number of $6$-cycles in a packing.

\begin{lemma}\label{Lemma:Leave_PathDecomp}
Let $U'$ and $W$ be sets such that $|U'|$ and $|W|$ are even, let $m_1,\ldots,m_\tau$ be even integers such that $4 \leq m_1 \leq \cdots \leq m_\tau \leq 3m_{\tau-1}$ and $m_1+\cdots +m_\tau=|U'||W|$.
If $m_{\tau-1}+m_\tau\leq 2|U'|$ when $|U'| = |W|$ and  $m_{\tau-1}+m_\tau\leq 2\min(|U'|,|W|)+2$ otherwise, then
\begin{itemize}
\item[(i)] for all distinct $i,j\in\{1,\ldots,\tau\}$ there exists an $((m_1,\ldots,m_\tau)\setminus (m_i,m_j))$-packing of $K_{U',W}$ whose reduced leave has a decomposition into an $m_i$-path and an $m_j$-path whose end vertices are in $W$; and

\item[(ii)] for each $i\in\{1,\ldots,\tau\}$, there exists an $((m_1,\ldots,m_\tau)\setminus (m_i))$-packing of $K_{U',W}$ whose reduced leave has a decomposition into an $(m_i-2)$-path and a $2$-path whose end vertices are in $W$.
\end{itemize}
\end{lemma}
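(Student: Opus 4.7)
The plan is to invoke Theorem~\ref{Theorem:Bipartite} (whose hypotheses on the list coincide with the conditions imposed here) to obtain an $(m_1,\ldots,m_\tau)$-decomposition $\mathcal{D}$ of $K_{U',W}$, and then to derive both conclusions by examining what remains when carefully chosen cycles of $\mathcal{D}$ are deleted.

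Part (ii) is straightforward. Let $C$ be the $m_i$-cycle in $\mathcal{D}$; the packing $\mathcal{D}\setminus\{C\}$ has reduced leave exactly $C$. Since $C$ lies in the bipartite graph $K_{U',W}$, its vertices alternate between $U'$ and $W$, so writing $C=(x_0,x_1,\ldots,x_{m_i-1})$ with $x_0\in W$ ensures $x_2\in W$ as well. The $2$-path $[x_0,x_1,x_2]$ together with the $(m_i-2)$-path $[x_2,x_3,\ldots,x_{m_i-1},x_0]$ then gives the required decomposition of the reduced leave, and both paths have their end vertices in $W$.

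For part (i), the key observation is that if the $m_i$-cycle $C_i$ and $m_j$-cycle $C_j$ of some decomposition share a vertex $w\in W$, then their edge-disjoint union has the required path decomposition. Writing $C_i=(w,u_0,w_1,u_1,\ldots,w_{m_i/2-1},u_{m_i/2-1})$ and $C_j=(w,v_0,z_1,v_1,\ldots,z_{m_j/2-1},v_{m_j/2-1})$, with $u_k,v_k\in U'$ and $w_k,z_k\in W$, one path of length $m_i$ is $[w_1,u_0,w,v_0,z_1,v_1,\ldots,z_{(m_i-2)/2}]$ and the complementary path of length $m_j$, obtained by traversing the remaining edges of $C_i$ and $C_j$, is $[w_1,u_1,w_2,\ldots,u_{m_i/2-1},w,v_{m_j/2-1},z_{m_j/2-1},\ldots,z_{(m_i-2)/2}]$. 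Both paths have end vertices $w_1$ and $z_{(m_i-2)/2}$, which lie in $W$.

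The principal obstacle is therefore to produce a decomposition in which the $m_i$-cycle and $m_j$-cycle share a $W$-vertex. My plan is twofold. First, whenever the modified list $(m_1,\ldots,m_\tau)\setminus(m_i,m_j)$ together with the single entry $m_i+m_j$ still satisfies conditions (B1)--(B3) of Theorem~\ref{Theorem:Bipartite}, the resulting decomposition of $K_{U',W}$ contains a single $(m_i+m_j)$-cycle; splitting it at any two of its $W$-vertices at distance $m_i$ around the cycle directly yields an $m_i$-path and an $m_j$-path with all endpoints in $W$, and no figure-eight is needed. Second, in the residual cases---most notably $\{i,j\}=\{\tau-1,\tau\}$ when $m_{\tau-1}+m_\tau>3m_{\tau-2}$, where (B2) or (B3) fails for the modified list---I would apply a switching argument in the spirit of Lemma~\ref{Lemma:CycleSwitch}: since any two vertices of $W$ are twin in $K_{U',W}$, performing a $(w,w')$-switch with $w\in V(C_i)\cap W$ and $w'\in V(C_j)\cap W$ yields a repacking preserving cycle lengths while altering $V(C_i)$ and $V(C_j)$. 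Using a monovariant such as $|V(C_i)\cap V(C_j)\cap W|$ to control the iteration, repeated switches should eventually produce a decomposition in which $C_i$ and $C_j$ share a $W$-vertex, reducing to the figure-eight construction above. Controlling the iteration so that it terminates, and disposing of a handful of degenerate small configurations, is where I expect the bulk of the effort to lie.
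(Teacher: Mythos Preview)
Your treatment of part~(ii) is correct and matches the paper's argument. For part~(i), however, there are genuine gaps.

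Your ``key observation'' implicitly assumes that $C_i$ and $C_j$ share \emph{exactly} the vertex $w$, so that $C_i\cup C_j$ is an $(m_i,m_j)$-chain. In general two edge-disjoint cycles of a decomposition can share many vertices; if, say, $u_0=v_1$ or $w_1=z_1$, your explicit walks revisit a vertex and are not paths. So even if your switching argument produced cycles with a common $W$-vertex, you would still need to control (or eliminate) all the other shared vertices before your construction applies. Your monovariant $|V(C_i)\cap V(C_j)\cap W|$ does not obviously help here, since the problem is too \emph{much} intersection, not too little. Moreover, the first sub-approach of replacing $(m_i,m_j)$ by $m_i+m_j$ can fail for reasons beyond (B2): when $m_{\tau-1}+m_\tau=2\min(|U'|,|W|)+2$ there is no $(m_i+m_j)$-cycle in $K_{U',W}$ at all, and when $\tau=2$ the modified list has a single entry.

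The paper takes a different and more robust route. After removing the $m_i$- and $m_j$-cycles from the decomposition, it splits according to whether the leave $L$ is connected. If $L$ is disconnected it performs a single switch on vertices of $U'$ to obtain either an $(m_i+m_j)$-cycle or an $(m_i,m_j)$-chain with link vertex in $U'$, either of which trivially admits the required path decomposition. If $L$ is connected, it invokes a sequence of structural lemmas from \cite{Horsley12} (Lemmas~3.4, 3.5, 3.2 there) that normalize the leave to have exactly one degree-$4$ vertex and a bounded number of components, and then extract the desired paths. This avoids entirely the need to engineer a specific intersection pattern between $C_i$ and $C_j$.
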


\begin{proof}
By Theorem~\ref{Theorem:Bipartite}, there exists an $(m_1,\ldots,m_\tau)$-decomposition $\D$ of $K_{U',W}$.

We can remove an $m_i$-cycle from $\D$ to obtain the packing required by (ii), so it remains to prove (i). Let $\P$ be a packing of $K_{U',W}$ obtained from $\D$ by removing an $m_i$-cycle and an $m_j$-cycle. Assume that $m_i\leq m_j$. Let $L$ be the reduced leave of $\P$. The proof divides into cases according to whether $L$ is connected.

\noindent {\bf Case 1.} Suppose that $L$ is connected. Then $L$ has at least one and at most $m_i$ vertices of degree $4$, and every other vertex of $L$ has degree $2$. Furthermore, if $L$ has exactly $m_i$ vertices of degree $4$ then $L$ has no cut vertex, since in this case $L$ has exactly $m_j$ vertices and contains an $m_j$-cycle.  So it can be seen that, by applying \cite[Lemma 3.4]{Horsley12} and then \cite[Lemma 3.5]{Horsley12} to this packing, we can obtain an $((m_1,\ldots,m_\tau)\setminus(m_i,m_j))$-packing of $K_{U',W}$ with a reduced leave $L'$ of size $m_i+m_j$ such that exactly one vertex of $L'$ has degree 4, every other vertex of $L'$ has degree 2, and
$L'$ has at most $m_i-1$ components. Then, by applying \cite[Lemma 3.2]{Horsley12} we can obtain an $((m_1,\ldots,m_\tau)\setminus(m_i,m_j))$-packing of $K_{U',W}$ whose reduced leave has a decomposition into an $m_i$-path and an $m_j$-path whose end vertices are in $W$ (note that $4 \leq m_i \leq m_j$).

\noindent {\bf Case 2.} Suppose that $L$ is not connected. Then $L$ must consist of two vertex-disjoint cycles. Let $x,y\in U'$ such that $x$ and $y$ are in distinct cycles of $L$. By applying an $(x,y)$-switch we obtain an $((m_1,\ldots,m_\tau)\setminus(m_i,m_j))$-packing of $K_{U',W}$ whose reduced leave $L'$ is either an $(m_i+m_j)$-cycle or an $(m_i,m_j)$-chain with link vertex in $U'$. In either case it is easy to see that $L'$ has a decomposition into an $m_i$-path and an $m_j$-path whose end vertices are in $W$.
\end{proof}

\begin{lemma}\label{Lemma:Leave_ManyPathDecomp}
Let $U'$ and $W$ be sets such that $|U'|$ and $|W|\geq 8$ are even, and let $\ell$ and $t$ be integers such that $\ell \in \{2,4,\ldots,12\}$ and $t \in \{6,8,\ldots,|W|-2\}$. Let $M$ be a  list of integers such that $m\in \{4,6\}$ for all entries $m$ in $M$ and $(\sum M)+k+\ell+t=|U'||W|$, where $k=\eceil{\frac{t+2}{3}}$ if $t \geq 12$ and
$k=0$ if $t \leq 10$. If $\max(k+2,\ell,8)+t \leq 2|U'|+2$ and $(\ell,t,|U'|,|W|) \neq (12,6,8,8)$, there exists an $(M,k)$-packing of $K_{U',W}$ whose reduced leave has a decomposition into an $\ell$-path and a $t$-path whose end vertices are in $W$.
\end{lemma}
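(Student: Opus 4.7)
The plan is to reduce to Lemma~\ref{Lemma:Leave_PathDecomp} by applying it to a carefully chosen cycle list. Let $M'$ be $M\cup(k,\ell,t)$ when $\ell\geq 4$ and $M\cup(k,t+2)$ when $\ell=2$, with $k$ omitted from $M'$ whenever $k=0$. When $\ell\geq 4$, apply Lemma~\ref{Lemma:Leave_PathDecomp}(i) to $M'$ with $m_i=\ell$ and $m_j=t$; the cycles of the resulting decomposition of $K_{U',W}$ other than the $\ell$-cycle and the $t$-cycle form the required $(M,k)$-packing, and its reduced leave decomposes into an $\ell$-path and a $t$-path with end vertices in $W$. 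When $\ell=2$, apply Lemma~\ref{Lemma:Leave_PathDecomp}(ii) to $M'$ with $m_i=t+2$; the remaining cycles form the required $(M,k)$-packing and the reduced leave decomposes into a $t$-path and a $2$-path with end vertices in $W$.

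The task then reduces to verifying the hypotheses of Lemma~\ref{Lemma:Leave_PathDecomp} for $M'$. The entries of $M'$ are all even and at least $4$ by construction, and $\sum M'=|U'||W|$ follows from the sum hypothesis on $M$. The inequality $m_\tau\leq 3m_{\tau-1}$ is routine: when $t\leq 10$ the maximum entry of $M'$ is at most $12$ and the list always contains an entry of at least $4$; when $t\geq 12$ the element $k=\eceil{(t+2)/3}$ lies in $M'$ and satisfies $3k\geq t+2>t$.

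The main obstacle is verifying the bound $m_{\tau-1}+m_\tau\leq 2|U'|$ in the balanced case $|U'|=|W|$ (and $\leq 2\min(|U'|,|W|)+2$ otherwise). The hypothesis $\max(k+2,\ell,8)+t\leq 2|U'|+2$ is tailored precisely for this, and a case analysis on which of $k+2$, $\ell$, or $8$ attains the maximum gives the required bound with two units of slack in every case except one: when $|U'|=|W|$, $\ell$ dominates $\max(k+2,\ell,8)$ (forcing $\ell\geq 8$ and $\ell\geq k+2$), $\ell$ is also the second-largest entry of $M'$, and $\ell+t=2|U'|+2$. The constraint $|W|\geq t+2$ (implicit in $t\leq |W|-2$) then forces $2|U'|\geq 2t+4$, which combined with $\ell+t=2|U'|+2$ yields $\ell\geq t+6$. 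Since $\ell\leq 12$, this forces $(\ell,t)=(12,6)$ and hence $|U'|=|W|=8$—the exact configuration ruled out by hypothesis.
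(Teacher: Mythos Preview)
Your approach is the same as the paper's: apply Lemma~\ref{Lemma:Leave_PathDecomp}(i) with the list $(M,k,\ell,t)$ when $\ell\geq 4$ and Lemma~\ref{Lemma:Leave_PathDecomp}(ii) with the list $(M,k,t+2)$ when $\ell=2$, then verify the hypotheses. The reduction and the check of $m_\tau\leq 3m_{\tau-1}$ are fine.

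Two minor issues in the verification of the size bound. First, your claim of ``two units of slack in every case except one'' is not literally true: when $\ell=2$ and $t\geq 12$ one has $m_\tau=t+2$ and $m_{\tau-1}=k$, so $m_{\tau-1}+m_\tau=k+t+2=\max(k+2,\ell,8)+t$ exactly, with no slack. This does not break the argument, because in the balanced case $|U'|=|W|$ the equality $k+t+2=2|U'|+2$ together with $t\leq |U'|-2$ and $k\leq \eceil{(t+2)/3}$ forces $|U'|\leq 0$, a contradiction; but you should say so rather than claim slack that is not there. Second, you do not treat the case $|W|<|U'|$ at all. There the hypothesis $\max(k+2,\ell,8)+t\leq 2|U'|+2$ is useless and one must instead use $t\leq |W|-2$ directly; for $\ell=2$ this gives $m_\tau=t+2\leq |W|$ and hence $m_{\tau-1}+m_\tau\leq 2|W|$, while for $\ell\geq 4$ one checks $m_{\tau-1}+m_\tau\leq t+\max(\ell,k,6)\leq (|W|-2)+12\leq 2|W|+2$ using $|W|\geq 8$. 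The paper also calls this routine, but your write-up omits it entirely.
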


\begin{proof}
If $\ell \in \{4,6,\ldots,12\}$, then apply Lemma~\ref{Lemma:Leave_PathDecomp}(i) taking $m_1,\ldots,m_\tau$ as the list $(M,k,\ell,t)$ reordered to be nondecreasing and $(m_i,m_j)=(\ell,t)$. If $\ell=2$, then apply Lemma~\ref{Lemma:Leave_PathDecomp}(ii) taking $m_1,\ldots,m_\tau$ as the list $(M,k,t+2)$ reordered to be nondecreasing and $m_i=t+2$.
The condition that $m_\tau\leq 3m_{\tau-1}$ holds by our definition of $k$. If $|U'| < |W|$, then $m_{\tau-1}+m_\tau\leq 2|U'|+2$ holds because $\max(k+2,\ell,8)+t \leq 2|U'|+2$. If $|W| < |U'|$, it is routine to show that $m_{\tau-1}+m_\tau\leq 2|W|+2$ holds by considering the cases $\ell=2$ and $\ell \in \{4,6,\ldots,12\}$ separately. Similarly, if $|W|=|U'|$ then $m_{\tau-1}+m_\tau\leq 2|W|$ holds since $(\ell,t,|U'|,|W|) \neq (12,6,8,8)$.
\end{proof}

\begin{lemma}\label{Lemma:PathsAndCycleToDecomp}
Let $U'$ and $W$ be sets with $|U'|,|W|$ even, and let $a$, $c$, $m$ and $t$ be nonnegative integers such that either
\begin{itemize}
    \item[(i)]
$(m,t)=(0,0)$ and $a+c \in \{3,\ldots,|W|\}$; or
    \item[(ii)]
$t \in \{2,4,\ldots,\efloor{m}-2\}$ and $a+c \in \{1,\ldots,|W|-m+\frac{t}{2}+1\}$.
\end{itemize}
Suppose there is an $(M)$-packing $\P$ of $K_{U',W}$ with a reduced leave $L$ such that $\deg_L(x)=2$ for each $x \in V(L) \cap W$ and $L$ is a union of edge-disjoint paths $P_0,\ldots,P_{a+c}$ such that
\begin{itemize}
    \item
$P_0$ has length $t$, $a$ of the paths $P_1,\ldots,P_{a+c}$ have length $2$, and the remaining $c$ have length $4$;
    \item
there are vertices $x_0,\ldots,x_{a+c} \in W$ such that, for $i \in \{1,\ldots,a+c\}$, the end vertices of $P_i$ are $x_{i-1}$ and $x_i$; and
    \item
the end vertices of $P_0$ are $x_0$ and $x_{a+c}$ (if $(m,t)=(0,0)$, $x_{a+c}=x_0$ and $P_0$ is trivial).
\end{itemize}
Let $C$ be an $(a+c+m-t)$-cycle such that $V(C) \subseteq W$ if $(m,t)=(0,0)$ and $V(C) \subseteq W \cup \{\alpha\}$ for some $\alpha \notin U' \cup W$ if $t>0$ (note that $a+c+m-t \in \{0\} \cup \{3,\ldots,|W|\}$). Then there exists an $(M,3^a,5^c,m)$-decomposition $\P'$ of $K_{U',W} \cup C$ that, if $m>0$, includes an $m$-cycle containing $m-t$ edges of $C$. Furthermore, if $|V(C)|+\frac{t-2}{2} \leq |W|-1$ and $c \geq 1$, then $\P'$ includes a $5$-cycle that has exactly one edge of $K_W$ and has a vertex in $W \setminus V(C)$.
\end{lemma}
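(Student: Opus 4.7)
The plan is to close up each path $P_i$ (for $i \geq 1$) into a short cycle using one edge of $C$, and to combine $P_0$ with the complementary arc of $C$ to form the $m$-cycle. First I would fix a cyclic labelling of the vertices of $C$: in case (i), write $C = (x_0, x_1, \ldots, x_{a+c-1})$, and in case (ii), write $C = (x_0, x_1, \ldots, x_{a+c}, z_1, \ldots, z_{m-t-1})$, where $z_1, \ldots, z_{m-t-1}$ enumerate the remaining $m-t-1$ vertices of $V(C)$. For each $i \in \{1, \ldots, a+c\}$, I would adjoin the edge $x_{i-1}x_i$ of $C$ to the path $P_i$ to form a cycle of length $|E(P_i)|+1 \in \{3, 5\}$; this produces the required $a$ 3-cycles and $c$ 5-cycles. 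If $m > 0$, I would adjoin $P_0$ (length $t$, endpoints $x_0, x_{a+c}$) to the arc $[x_{a+c}, z_1, \ldots, z_{m-t-1}, x_0]$ of $C$ (length $m-t$) to form the required $m$-cycle, which contains exactly $m-t$ edges of $C$. Setting $\P' = \P \cup \{\text{new cycles}\}$ yields the desired decomposition, since the new cycles use precisely $E(L) \cup E(C)$ while $\P$ covers $E(K_{U',W}) \setminus E(L)$, and the cycle lengths match.

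For the furthermore clause, each of the $c$ 5-cycles produced above has the form $(x_{i-1}, u, w_i, u', x_i)$, where $P_i = [x_{i-1}, u, w_i, u', x_i]$ with $u, u' \in U'$ and $w_i \in W$; it has exactly one edge of $K_W$ (namely $x_{i-1} x_i$), and its three $W$-vertices are $x_{i-1}, x_i, w_i$. Since $\deg_L(x) = 2$ for every $x \in V(L) \cap W$, the middle vertices $w_1, \ldots, w_c$ are pairwise distinct and disjoint from $\{x_0, \ldots, x_{a+c}\}$ and from the intermediate $W$-vertices of $P_0$. In case (i), $V(C) = \{x_0, \ldots, x_{a+c-1}\}$, so every $w_i$ automatically lies in $W \setminus V(C)$ and any 5-cycle works. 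In case (ii), if some $w_i \notin V(C)$ we are also done; otherwise all $w_i \in V(C)$, and the hypothesis $|V(C)| + \frac{t-2}{2} \leq |W|-1$ together with the fact that at most $\frac{t-2}{2}$ vertices of $V(L) \setminus V(C)$ can lie in $W$ (the intermediate $W$-vertices of $P_0$) gives
\[
|W \cap (V(C) \cup V(L))| \leq |V(C)| + \tfrac{t-2}{2} \leq |W|-1,
\]
so some $w^* \in W$ lies outside $V(C) \cup V(L)$. A $(w_i, w^*)$-switch (valid via Lemma~\ref{Lemma:CycleSwitch} or its bipartite analogue, since $w_i$ and $w^*$ are twin with $N_L(w^*) = \emptyset$ and $N_L(w_i) = \{u, u'\}$) then produces an $(M)$-packing $\P''$ whose reduced leave differs from $L$ only in that $w_i$ is replaced by $w^*$ inside $P_i$; running the basic construction on $\P''$ yields a decomposition $\P'$ whose 5-cycle from this path has middle vertex $w^* \in W \setminus V(C)$.

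The main obstacle I anticipate is not a single deep step but the bookkeeping of the cyclic labelling of $C$, the matching of the paths $P_i$ to the edges $x_{i-1}x_i$ of $C$, and verifying that the basic construction still applies after the switch in case (ii). The length arithmetic and edge-disjointness verifications are routine once the labelling is fixed, and the switch argument only relies on the counting-based existence of $w^*$ (guaranteed by the hypothesis) and on $w_i, w^*$ being twin in $K_{U',W}$ (automatic since both are in $W$).
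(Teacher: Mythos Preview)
Your core construction is exactly the paper's: close each $P_i$ with the edge $x_{i-1}x_i$ of $C$ and close $P_0$ with the complementary arc, obtaining $\{P_0 \cup P'\} \cup \{P_i \cup [x_{i-1},x_i] : i=1,\ldots,a+c\}$.

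There is, however, a genuine gap in your ``fix a cyclic labelling'' step. The vertices $x_0,\ldots,x_{a+c}$ are already determined by the hypotheses (they are the endpoints of the given paths $P_i$), and $C$ is a given cycle with its own vertex set; you cannot simply declare that $C=(x_0,\ldots,x_{a+c},z_1,\ldots)$. The paper resolves this by \emph{permuting the vertices of $\P$}: any permutation of $W$ is an automorphism of $K_{U',W}$, so one may assume the $x_i$ land on consecutive vertices of $C$. More importantly, the same permutation must also place the $\tfrac{t-2}{2}$ internal $W$-vertices of $P_0$ into $W \setminus V(C)$; otherwise your purported $m$-cycle $P_0 \cup [x_{a+c},z_1,\ldots,z_{m-t-1},x_0]$ could have a repeated vertex and fail to be a cycle. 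This is precisely where hypothesis (ii) enters: $a+c \leq |W|-m+\tfrac{t}{2}+1$ rearranges to $|V(C)|+\tfrac{t-2}{2} \leq |W|$, guaranteeing enough room in $W \setminus V(C)$. You never address this, so as written the construction of the $m$-cycle is incomplete.

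For the furthermore clause, the paper takes a simpler route than your switch argument: during the same vertex permutation it also arranges that one internal $W$-vertex of some length-$4$ path $P_i$ lands in $W \setminus V(C)$, using the one extra unit of room provided by $|V(C)|+\tfrac{t-2}{2} \leq |W|-1$. Your $(w_i,w^*)$-switch does work (the bipartite switching lemma applies and the leave changes exactly as you describe), but it is more machinery than needed once the permutation step is done properly.
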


\begin{proof}
It follows from (i) and (ii) that $a+c+m-t \in \{3,\ldots,|W|\}$. Let $C$ be such an $(a+c+m-t)$-cycle. By permuting vertices in $\P$, we can assume that $x_0x_1,x_1x_2,\ldots,x_{a+c-1}x_{a+c}$ are consecutive edges in $C$ (note that $x_{a+c}=x_0$ if $(m,t)=(0,0)$ and that $|E(C)|-(a+c) \geq 2$ otherwise) and that no internal vertices of $P_0$ are in $V(C)$. To see that we can do this note that, if $t>0$, then $\frac{t-2}{2}+(a+c+m-t) \leq |W|$ by (ii). Furthermore, if $|V(C)|+\frac{t-2}{2} \leq |W|-1$ and $c \geq 1$, we can ensure that some path of length $4$ in $\{P_1,\ldots,P_{a+c}\}$ has an internal vertex in $W \setminus V(C)$. Let $P'$ be the $(m-t)$-path induced by the edges of $C$ other than $x_0x_1,x_1x_2,\ldots,x_{a+c-1}x_{a+c}$ (if $(m,t)=(0,0)$, then $x_{a+c}=x_0$ and $P'$ is trivial). Then $\{P_0\cup P'\} \cup \{P_i\cup[x_{i-1},x_i]: i=1,\ldots,a+c\}$ is a $(3^{a},5^{c},m)$-decomposition of $C \cup L$ and the result follows.
\end{proof}

\begin{lemma}\label{Lemma:BipartiteAndOneCycle}
Let $U'$ and $W$ be sets such that $|U'| \geq 2$ and $|W| \geq 4$ are even. Let $a$, $b$, $c$, $d$ and $m$ be nonnegative integers such that
\begin{itemize}
    \item[(i)]
$d=0$ if $|U'| = 2$;
    \item[(ii)]
$2a+4b+4c+6d+t=|U'||W|$ where $t \in \{0,2,4\}$;
    \item[(iii)]
$2a+4c+t \leq 2|W|$; and
    \item[(iv)]
either
\begin{itemize}
    \item
$(m,t)=(0,0)$ and $a+c \in \{0\} \cup \{3,\ldots,|W|\}$; or
    \item
$t \in \{2,4\}$, $m \in \{t+2,\ldots,|W|\}$ and $a+c \in \{1,\ldots,|W|-m+\frac{t}{2}+1\}$.
\end{itemize}
\end{itemize}
Let $C$ be an $(a+c+m-t)$-cycle such that $V(C) \subseteq W$ if $t=0$ and $V(C) \subseteq W \cup \{\alpha\}$ for some $\alpha \notin U' \cup W$ if $t>0$ (note that $a+c+m-t \in \{0\} \cup \{3,\ldots,|W|\}$). Then there exists a $(3^a,4^{b},5^c,6^{d},m)$-decomposition of $K_{U',W} \cup C$ that, if $m>0$, includes an $m$-cycle containing $m-t$ edges of $C$.
\end{lemma}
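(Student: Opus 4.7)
The approach is a reduction to Lemma~\ref{Lemma:PathsAndCycleToDecomp}. I would construct a $(4^b,6^d)$-packing of $K_{U',W}$ whose reduced leave $L$ decomposes into paths satisfying the hypotheses of Lemma~\ref{Lemma:PathsAndCycleToDecomp} (with $M=(4^b,6^d)$), and then invoke that lemma together with the given cycle $C$ to obtain the claimed $(3^a,4^b,5^c,6^d,m)$-decomposition of $K_{U',W}\cup C$.

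The trivial case $a+c=0$ is forced by (iv) to have $(m,t)=(0,0)$, so the required packing is just a $(4^b,6^d)$-decomposition of $K_{U',W}$; this is supplied by Theorem~\ref{Theorem:ChouFuHuang_no8}, with hypothesis (i) providing the side condition $d=0$ when $|U'|=2$.

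In the remaining cases, the intended $L$ is the edge-disjoint union of a $t$-path $P_0$ and a $(2a+4c)$-length subgraph (possibly with repeated $U'$-vertices) whose two ``ends'' coincide with $P_0$'s endpoints $x_0,x_{a+c}\in W$ (with $P_0$ trivial when $t=0$, so $x_{a+c}=x_0$). The $(2a+4c)$-part is then subdivided into $a$ length-$2$ and $c$ length-$4$ segments $P_1,\ldots,P_{a+c}$ with intermediate markers $x_1,\ldots,x_{a+c-1}$; because alternating walks in $K_{U',W}$ between two $W$-vertices have $W$-vertices at every even step, these markers automatically lie in $W$, and hypothesis (iii), $2a+4c+t\le 2|W|$, ensures that the $a+2c+t/2$ distinct $W$-vertices needed are available. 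To produce such an $L$, I apply Theorem~\ref{Theorem:Bipartite} to obtain a $(4^b,6^d,x,y)$-decomposition of $K_{U',W}$ with $x+y=2a+4c+t$, then apply Lemma~\ref{Lemma:Leave_PathDecomp}(i) to replace the $x$- and $y$-cycles by an $x$-path and a $y$-path with shared endpoints in $W$ (the unique degree-$4$ vertex of the merged leave being placed in $U'$, so that every $W$-vertex of $L$ has degree $2$). The corner $t=2$, in which $y=2$ is not a legal cycle length, is handled instead by taking a $(4^b,6^d,2a+4c+2)$-decomposition and applying Lemma~\ref{Lemma:Leave_PathDecomp}(ii) with $m_i=2a+4c+2$.

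The principal obstacle is choosing the split $(x,y)$ so that Theorem~\ref{Theorem:Bipartite}'s conditions (B2), (B3), and the analogous condition of Lemma~\ref{Lemma:Leave_PathDecomp}, all hold simultaneously. Hypothesis (iii) bounds $2a+4c+t$ by $2|W|$, but (B3) demands $m_{\tau-1}+m_\tau\le 2\min(|U'|,|W|)+2$ (or $2\min(|U'|,|W|)$ if $|U'|=|W|$), so a balanced split $x\approx y$ will be needed when $|U'|$ is small relative to $|W|$ or when $2a+4c+t$ is near its upper bound. Verifying that a valid split exists across all parameter regimes of hypothesis (iv), along with the special handling of $t\in\{2,4\}$ and of the small-$|U'|$ cases controlled by (i), will constitute the bulk of the detailed case analysis.
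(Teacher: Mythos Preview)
Your high-level plan---reduce to Lemma~\ref{Lemma:PathsAndCycleToDecomp} by producing a $(4^b,6^d)$-packing of $K_{U',W}$ whose leave has the right path structure---is exactly the paper's strategy. The divergence is in how that packing is constructed, and there your route has a genuine gap.

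You propose to obtain the leave via Theorem~\ref{Theorem:Bipartite} (a $(4^b,6^d,x,y)$-decomposition) followed by Lemma~\ref{Lemma:Leave_PathDecomp}. This fails outright when $|U'|=2$: every cycle in $K_{2,|W|}$ is a $4$-cycle, so no $(4^b,x,y)$-decomposition with $x>4$ or $y>4$ exists, yet $2a+4c+t$ can be as large as $2|W|$. (Concretely, take $|U'|=2$, $|W|=10$, $a=10$, $b=c=d=t=0$; all hypotheses of the lemma hold, but your first step cannot even begin.) Condition (B3) of Theorem~\ref{Theorem:Bipartite} is the symptom, but the obstruction is structural, not a matter of choosing the split $(x,y)$ more cleverly. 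A second, independent problem: Lemma~\ref{Lemma:Leave_PathDecomp} guarantees path end-vertices in $W$, but it does \emph{not} guarantee that the degree-$4$ vertex of the resulting leave lies in $U'$; if it lies in $W$, the hypothesis $\deg_L(x)=2$ for all $x\in V(L)\cap W$ of Lemma~\ref{Lemma:PathsAndCycleToDecomp} is violated.

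The paper sidesteps both issues by instead taking a $(4^b,6^d)$-packing from Theorem~\ref{Theorem:ChouFuHuang_no8} (whose hypotheses are mild and are exactly why (i) is there), and then reshaping the leave with switches: first Lemma~\ref{Lemma:EqualiseOneStep} to force every $W$-vertex down to degree~$2$ (using $|E(L')|\le 2|W|$ from (iii)), then further switches on $U'$-vertices to make the leave connected. A connected even bipartite graph with all $W$-degrees equal to~$2$ has a closed Euler trail in which every length-$2$ or length-$4$ subtrail starting in $W$ is a genuine path, and this immediately yields the $P_0,\ldots,P_{a+c}$ decomposition. This switching step is the missing idea in your proposal.
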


\begin{proof}
If $(a,c,m,t)=(0,0,0,0)$, then the result follows from Theorem~\ref{Theorem:ChouFuHuang_no8}. Thus, using (iv), we may assume that $a+c+m-t \in \{3,\ldots,|W|\}$ and $2a+4c+t \geq 4$.

Suppose there exists a $(4^{b},6^{d})$-packing $\P'$ of $K_{U',W}$ with a reduced leave $L'$ such that $L'$ is connected and $\deg_{L'}(x)=2$ for all $x\in V(L')\cap W$. Then $|E(L')|=2a+4c+t$ by (ii). We claim that in this case $L'$ has a suitable decomposition into paths so that we can complete the proof by applying Lemma \ref{Lemma:PathsAndCycleToDecomp} (with $M=(4^b,6^d)$) to $\P'$. If $|E(L')|=4$, then $(a,c,t)=(1,0,2)$ and $L'$ is a 4-cycle and has a suitable decomposition into two $2$-paths. If $|E(L')|\geq 6$ then, because $L'$ is a connected even graph, it has a closed Eulerian trail. Because $L'$ is bipartite and $\deg_{L'}(x)=2$ for all $x\in V(L') \cap W$, any subtrail of this trail that begins at a vertex in $V(L')\cap W$ and has length 2 or 4 is a path. Thus, a suitable decomposition of $L'$ into $a$ $2$-paths, $c$ $4$-paths and a $t$-path exists. So it suffices to find a $(4^{b},6^{d})$-packing of $K_{U',W}$ with a reduced leave $L'$ such that $L'$ is connected and $\deg_{L'}(x)=2$ for all $x\in V(L')\cap W$.

By applying Theorem~\ref{Theorem:ChouFuHuang_no8} and removing cycles, we can obtain a $(4^{b},6^{d})$-packing $\P''$ of $K_{U',W}$. We can do this because $|U'||W|-4b-6d \in \{0\} \cup \{4,6,\ldots,|U'||W|\}$ by (ii) and (iv), and because $|U'||W|-4b-6d\equiv 0\mod{4}$ when $|U'|=2$ by (i). Let $L''$ be the reduced leave of $\P''$.

\noindent {\bf Case 1.}
Suppose $\deg_{L''}(x)=2$ for all $x\in V(L'')\cap W$. If $L''$ is connected then we are done. Otherwise, let $y_1,y_2\in V(L'')\cap U'$ such that $y_1$ is in a largest component of $L''$ and $y_2$ is in another component of $L''$. By performing a $(y_1,y_2)$-switch with origin adjacent to $y_2$ we obtain a repacking of $\P''$ whose reduced leave has a component larger than any component in $L''$. We can repeat this process until we obtain a repacking of $\P''$ whose reduced leave is connected.

\noindent {\bf Case 2.}
Suppose $\deg_{L''}(x) \geq 4$ for some $x\in V(L'')\cap W$. By repeatedly applying Lemma~\ref{Lemma:EqualiseOneStep} we can obtain a repacking of $\P''$ whose reduced leave has no vertices of degree greater than 2 in $W$ (note that $|E(L'')| \leq 2|W|$ by (iii)). Thus we can proceed as in Case 1 to complete the proof.
\end{proof}

The following is a method for packing $3$- and $5$-cycles into the complete graph with a hole, where each cycle has exactly one pure edge.

\begin{lemma}\label{Lemma:3s5s}
Let $W$ be a set of even size $w \geq 6$, and let $a$ and $c$ be nonnegative integers such that $a$ is even and $(a,c) \neq (0,0)$. Let $n$ and $b$ be the integers such that $a+2c=nw-2b$ and $0 \leq b \leq \frac{w-2}{2}$, and let $U'$ be a set such that $U'\cap W=\emptyset$ and $|U'|=2n$. Let $\ell_1,\ldots,\ell_{n}$ be integers such that $\ell_i \in \{\frac{w}{2},\ldots, w\}$ for $i \in \{2,\ldots,n\}$, $\ell_1 \in \{\frac{1}{2}(w-2b),\ldots, w-2b\} \setminus \{1,2\}$ and $\ell_1+\cdots+\ell_n=a+c$. Then, for any edge-disjoint cycles $C_1,\ldots,C_{n}$ in $K_{W}$ with lengths $\ell_1,\ldots,\ell_{n}$, there exists a $(3^{a},5^{c})$-packing of $K_{U',W}\cup C_1\cup\cdots\cup C_{n}$ whose reduced leave is a subgraph of $K_{U',W}$ isomorphic to $K_{2,2b}$. Furthermore, if $a+2c \not\equiv 2 \mod{w}$, and if $c \not\equiv 2 \mod{\frac{w}{2}}$ when $a=0$, then there do exist such integers $\ell_1,\ldots,\ell_{n}$.
\end{lemma}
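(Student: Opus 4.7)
The plan is to partition $U'$ into $n$ apex pairs $U_i=\{u_i,u_i'\}$ and, for each $i$, build a local packing from the pure edges of $C_i$ together with cross edges incident to $U_i$. Set $c_i = w-\ell_i$ and $a_i = 2\ell_i-w$ for $i\in\{2,\ldots,n\}$, and $c_1 = w-2b-\ell_1$, $a_1 = 2\ell_1-w+2b$. The given range for each $\ell_i$ ensures $a_i,c_i\geq 0$ and $a_i+c_i=\ell_i$, and each $a_i$ is even because $w$ is. The intent is that $C_i$ will contribute $a_i$ triangles (each using one pure edge of $C_i$ plus two cross edges to a single vertex of $U_i$) and $c_i$ pentagons (each using one pure edge of $C_i$ plus four cross edges routed through both of $u_i,u_i'$ and a ``third vertex'' $z\in W\setminus V(C_i)$).

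For the local construction at $C_i=(y_0,\ldots,y_{\ell_i-1})$, I would choose any $c_i$-subset of the pure edges to act as pentagon edges and call the rest triangle edges. I then assign each $y_j$ an apex label $\tau_j\in\{u_i,u_i'\}$, stipulating that at $y_j$ the cross edge $y_j\tau_j$ is used by the incoming pure edge and $y_j\tau_j^c$ by the outgoing one. A triangle edge forces both endpoints to use the same apex, so $\tau$ flips across it, while a pentagon edge forces opposite apexes, so $\tau$ is unchanged; the labelling closes up consistently around the cycle precisely because $a_i$ is even. Each triangle edge $y_jy_{j+1}$ then gives the triangle $(\tau_{j+1},y_j,y_{j+1})$, and each pentagon edge, paired with a distinct third vertex $z_j$, gives the pentagon $(\tau_j^c,y_j,y_{j+1},\tau_{j+1},z_j)$. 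For $i\geq 2$, every vertex of $W\setminus V(C_i)$ serves as exactly one third vertex (since $|W\setminus V(C_i)|=c_i$); for $i=1$, I reserve a set $B\subseteq W\setminus V(C_1)$ of $2b$ vertices that are never used as third vertices.

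The only cross edges unused by the whole construction are those between $\{u_1,u_1'\}$ and $B$, which form $K_{2,2b}$: for each $i\geq 2$, the $2\ell_i$ cross edges at $V(C_i)$ plus the $2(w-\ell_i)$ cross edges at pentagon third vertices exhaust all $2w$ cross edges out of $U_i$, and the analogous count for $i=1$ leaves precisely the claimed $4b$ edges. For the ``furthermore'' claim, I would parametrise $\ell_1 = \tfrac{w-2b}{2}+s_1$ and $\ell_i = \tfrac{w}{2}+s_i$ with $s_i\geq 0$; using $a+2c=nw-2b$, the sum condition reduces to $s_1+\cdots+s_n = a/2$, which is always achievable within the allowed bounds. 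The constraint $\ell_1\notin\{1,2\}$ can fail only when $b\in\{\tfrac{w-2}{2},\tfrac{w-4}{2}\}$: the first case is exactly $a+2c\equiv 2\pmod w$, excluded by hypothesis, while the second forces $s_1\geq 1$, i.e.\ $a\geq 2$, which the condition on $c\pmod{w/2}$ when $a=0$ is precisely designed to guarantee. The most delicate point is the cyclic consistency of the apex labelling around each $C_i$, which is resolved by the parity of $a_i$; the remaining bookkeeping that the leave is exactly $K_{2,2b}$ (and not some slightly different bipartite subgraph) is then routine.
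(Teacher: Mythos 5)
Your proposal is correct, and its global skeleton is exactly the paper's: partition $U'$ into $n$ apex pairs, give the $i$th pair a $(3^{2\ell_i-w},5^{w-\ell_i})$-decomposition of $K_{\{u_i,u_i'\},W}\cup C_i$ (for $i=1$, work inside a subset $W_1=W\setminus B$ of size $w-2b$, i.e.\ your reserved set $B$), and observe that the union leaves precisely $K_{\{u_1,u_1'\},B}\cong K_{2,2b}$; your choice of $\ell_1$ versus the paper's and your $s_i$-parametrisation of the ``furthermore'' claim are also equivalent to the paper's computation showing $\tfrac{w}{2}(n-1)\leq a+c-\ell_1\leq w(n-1)$, with the hypotheses ruling out $w-2b=2$, and $w-2b=4$ when $a=0$. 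Where you genuinely diverge is in how the per-pair decomposition is obtained: the paper simply invokes its Lemma~\ref{Lemma:BipartiteAndOneCycle} (whose proof rests on the Chou--Fu--Huang $(4,6)$-decomposition theorem and an Euler-trail splitting argument), whereas you construct it by hand via the apex-labelling of $V(C_i)$, with the label flipping across triangle edges and fixed across pentagon edges, closure being guaranteed by the evenness of $a_i=2\ell_i-w$ (resp.\ $2\ell_1-w+2b$). Your construction checks out: each vertex of $C_i$ has both its cross edges to $U_i$ used once, each third vertex uses both of its cross edges, and the counts $c_i=|W\setminus V(C_i)|$ (resp.\ $c_1=|W\setminus V(C_1)|-2b$) make the bookkeeping exact, so the leave is exactly $K_{2,2b}$. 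What your route buys is a self-contained, elementary proof of the special case of Lemma~\ref{Lemma:BipartiteAndOneCycle} actually needed here ($|U'|=2$, no $4$- or $6$-cycles, $m=t=0$), at the cost of redoing explicitly what the paper gets for free from machinery it needs elsewhere anyway. The only point you gloss is the corner of the ``furthermore'' argument with $b=\tfrac{w-4}{2}$ and $c=0$: there one needs $s_1=2$, i.e.\ $a\geq 4$, not merely $a\geq 2$; this is harmless because $c=0$ and $a+2c=(n-1)w+4$ force $a=(n-1)w+4\geq 4$, but it deserves a sentence.
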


\begin{proof}
Suppose first that we are given a list $\ell_1,\ldots,\ell_{n}$ satisfying our hypotheses. Let $U'=\{p_1,\ldots,p_{2n}\}$. By Lemma~\ref{Lemma:BipartiteAndOneCycle}, there is a $(3^{2\ell_i-w},5^{w-\ell_i})$-decomposition $\mathcal{D}_i$ of $K_{\{p_{2i-1},p_{2i}\},W} \cup C_i$ for $i \in \{2,\ldots,n\}$. Let $W_1$ be a set of size $w-2b$ such that $V(C_1) \subseteq W_1 \subseteq W$. Also by Lemma~\ref{Lemma:BipartiteAndOneCycle}, there is a $(3^{2\ell_1-w+2b},5^{w-2b-\ell_1})$-decomposition $\mathcal{D}_1$ of $K_{\{p_{1},p_{2}\},W_1} \cup C_1$. Using the facts that $nw-2b=a+2c$ and that $\ell_1+\cdots+\ell_n=a+c$, it can be seen that $\mathcal{D}_1 \cup \cdots \cup \mathcal{D}_n$ is a $(3^{a},5^{c})$-packing of $K_{U',W}\cup C_1\cup\cdots\cup C_{n}$. The reduced leave of this packing is $K_{\{p_1,p_2\},W \setminus W_1}$, which is isomorphic to $K_{2,2b}$.

Now suppose that $a+2c \not\equiv 2 \mod{w}$ and that $c \not\equiv 2 \mod{\frac{w}{2}}$ if $a=0$. Note that, because $a+2c=nw-2b$, the former implies that $w-2b \neq 2$ and the latter implies that $w-2b \neq 4$ if $a=0$. Let $\ell_1=w-2b$ if $a \geq w-2b$ and let $\ell_1=\frac{1}{2}(w-2b+a)$ if $a < w-2b$. Then $\ell_1 \in \{\frac{1}{2}(w-2b),\ldots, w-2b\} \setminus \{1,2\}$. To show that there exist integers $\ell_2,\ldots,\ell_n$ such that $\ell_i \in \{\frac{w}{2},\ldots,w\}$ for $i \in \{2,\ldots,n\}$ and $\ell_2+\cdots+\ell_n=a+c-\ell_1$, and hence to complete the proof, it suffices to show that $\frac{w}{2}(n-1) \leq a+c-\ell_1 \leq w(n-1)$. If $a \geq w-2b$, then $a+c-\ell_1=a+c-w+2b$ and
\[\tfrac{w}{2}(n-1) = \tfrac{1}{2}(a+2c-w+2b) \leq  a+c-w+2b \leq a+2c-w+2b = w(n-1),\]
where both equalities follow from $a+2c=nw-2b$, the first inequality follows because $a \geq w-2b$ and the second inequality follows because $c \geq 0$.
If $a < w-2b$, then
\[a+c-\ell_1=\tfrac{1}{2}(a+2c-w+2b)=\tfrac{w}{2}(n-1),\]
where the first equality follows because $\ell_1=\frac{1}{2}(w-2b+a)$ and the second equality follows because $a+2c=nw-2b$.
\end{proof}

\begin{lemma}\label{Lemma:4s_to_6s}
Let $\P=\{C_1,\ldots,C_r,X_1,\ldots,X_{3j}\}$ be an $(M,4^{3j})$-decomposition of an even graph $G$ where $X_1,\ldots,X_{3j}$ are $4$-cycles. If there is a set $S$ of four vertices in $G$ that are pairwise twin and pairwise nonadjacent such that $|V(X_i) \cap S|=2$ for $i \in \{1,\ldots,3j\}$, then there is an $(M,6^{2j})$-decomposition $\P'=\{C'_1,\ldots,C'_r,Y'_1,\ldots,Y'_{2j}\}$ of $G$ such that $Y'_1,\ldots,Y'_{2j}$ are $6$-cycles and, for each $i \in \{1,\ldots,r\}$, $V(C'_i)=\pi_i(V(C_i))$ for some permutation $\pi_i$ of $V(G)$ that fixes each vertex in $V(G) \setminus S$.
\end{lemma}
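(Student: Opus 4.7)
Since $S$ is an independent set of pairwise twin vertices, each $X_i$ with $|V(X_i)\cap S|=2$ has its two $S$-vertices at opposite corners, so I may write $X_i=(s_{a_i},v_i,s_{b_i},v'_i)$ with $v_i,v'_i\notin S$. I shall call the unordered pair $\{s_{a_i},s_{b_i}\}$ the \emph{type} of $X_i$. The engine of the proof is the following direct edge identity. Given three $4$-cycles
\[ X=(s_a,u_1,s_b,u_2),\quad Y=(s_a,u_3,s_c,u_4),\quad Z=(s_b,u_5,s_c,u_6) \]
whose types form a triangle in $K_4$ on three distinct vertices $s_a,s_b,s_c\in S$, edge-disjointness of $X,Y,Z$ forces $u_1,\ldots,u_6$ to be pairwise distinct (for instance, $u_1=u_3$ would make $s_au_1$ an edge of both $X$ and $Y$), and a direct edge-by-edge verification shows that the twelve edges of $X\cup Y\cup Z$ repartition as the edge-disjoint union of the two $6$-cycles
\[ R_1=(s_a,u_1,s_b,u_5,s_c,u_3)\quad\text{and}\quad R_2=(s_a,u_2,s_b,u_6,s_c,u_4). \]
Thus any triangular triple of $4$-cycles can be exchanged for two $6$-cycles without touching any other cycle in $\P$.

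My plan is therefore to partition the $3j$ four-cycles into $j$ triangular triples and apply this identity to each triple. The obstacle is that the given type distribution need not already admit such a partition; for example, all $3j$ four-cycles could share a single type $\{s_a,s_b\}$, in which case $\bigcup X_i$ lies inside $K_{\{s_a,s_b\},T}$ and contains no $6$-cycle at all. I remedy this by exploiting the freedom afforded by the permutations $\pi_i$. Applying $\pi_i=(s_a\,s_c)$ to a $C_i$ containing $s_c$ swaps the $s_a$- and $s_c$-incident edges of $C_i$; since $\P'$ must still decompose $G$, this reassignment forces a compensating trade in the $X$-region that effectively retypes one of the $X_j$'s by exchanging one of its $S$-vertices. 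Coordinating such swaps should let me reach a configuration whose type multiset partitions into $j$ triangular triples.

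The main technical hurdle, and the step I expect to be hardest, is proving that such a sequence of coordinated swaps always exists given the hypotheses on $S$ and the $X_i$. I plan to carry this out by induction on $j$, peeling off one triangular triple at a time, with a case analysis on the type multiset that treats non-triangular residual configurations (three cycles of one type, star-shaped, path-shaped, and disjoint-pair configurations) separately and shows how to reduce each to a configuration containing an available triangular triple. Once the retyping is complete, the final decomposition $\P'$ is assembled as the union of the retyped cycles $C'_i=\pi_i(C_i)$ with the $2j$ six-cycles produced by the identity on each triangular triple, yielding the required $(M,6^{2j})$-decomposition of $G$ with the prescribed permutations fixing $V(G)\setminus S$.
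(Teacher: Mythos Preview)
Your triangular-triple identity is correct and is essentially the endgame the paper also uses: a $(6,6)$-decomposition of the twelve-edge union of three $4$-cycles is equivalent to your pair $R_1,R_2$. The paper likewise proceeds by induction on $j$, handling three $4$-cycles at a time.

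The gap is your retyping mechanism. You write that applying $\pi_i=(s_a\,s_c)$ to a single $C_i$ ``forces a compensating trade in the $X$-region that effectively retypes one of the $X_j$'s''. But replacing $C_i$ by $\pi_i(C_i)$ introduces the edges $s_au,\,s_av$ (where $s_cu,\,s_cv$ were the $s_c$-edges of $C_i$), and nothing guarantees these lie in $\bigcup X_k$ rather than in some other $C_l$; if they lie in $C_l$ you no longer have an edge-disjoint family, and no local trade in the $X$-region repairs this. Even when both new edges do lie in the $X$-region, removing them and inserting $s_cu,\,s_cv$ typically destroys the $4$-cycle decomposition of that region (e.g.\ if $u$ and $v$ sit in different $X_k$'s you get two $5$-paths, not $4$-cycles). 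So the object you want to ``retype'' need not survive the operation, and the promised case analysis on type multisets has no stable combinatorial substrate to work with.

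The paper sidesteps this entirely. It removes three of the $X_k$ from $\P$, views their twelve-edge union $L$ as the leave of the remaining packing, and then applies the switching lemma (Lemma~\ref{Lemma:CycleSwitch}) on pairs of vertices in $S$. Each switch is guaranteed to return a valid repacking whose non-leave cycles satisfy exactly the $V(C'_i)=\pi_i(V(C_i))$ conclusion, and the paper shows by a short case analysis on the degree sequence of $L$ in $S$ (degree-$6$ vertex present; no degree-$6$ vertex and $L$ connected; $L$ disconnected) that one or two switches always produce an $L'$ admitting a $(6,6)$-decomposition. The key difference is that the paper never tries to maintain a $4$-cycle decomposition of the leave during the process; it works directly toward the $6$-cycles. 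If you want to salvage your approach, the right move is to abandon the ``retype a $4$-cycle'' picture and instead argue via switches on the leave of $\P\setminus\{X_{3j-2},X_{3j-1},X_{3j}\}$, which is exactly what the paper does.
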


\begin{proof}
The result is trivial if $j=0$, so we may assume that $j \geq 1$. It suffices to show there is an $(M,4^{3j-3},6^{2})$-decomposition $\P^{\star}$ of $G$ such that $\P^{\star}$ contains two 6-cycles $Y^{\star}_1$ and $Y^{\star}_2$ and the packing $\P^{\star} \setminus \{Y^{\star}_1,Y^{\star}_2\}$ can be obtained from the packing $\P \setminus \{X_{3j-2},X_{3j-1},X_{3j}\}$ via a sequence of switches on vertices in $S$. Then by Lemma~\ref{Lemma:CycleSwitch}, $\P^{\star}=\{C^{\star}_1,\ldots,C^{\star}_r,X^{\star}_1,\ldots,X^{\star}_{3j-3},Y^{\star}_1,Y^{\star}_2\}$ where $X^{\star}_1,\ldots,X^{\star}_{3j-3}$ are $4$-cycles that each contain two vertices in $S$ and, for each $i \in \{1,\ldots,r\}$, $C^{\star}_i=\pi_i(C_i)$ for some permutation $\pi_i$ of $V(G)$ that fixes each vertex in $V(G) \setminus S$. So the procedure can be repeated $j-1$ further times to complete the proof.

Let $L$ be the reduced leave of $\P \setminus \{X_{3j-2},X_{3j-1},X_{3j}\}$ and note that $L$ is an even subgraph of $K_{S,V(G) \setminus S}$ with exactly 12 edges (we will not make use of the fact that $L$ has a decomposition into three $4$-cycles). If $L$ contains a $6$-cycle, then $L$ has a $(6,6)$-decomposition and adding these two $6$-cycles to $\P \setminus \{X_{3j-2},X_{3j-1},X_{3j}\}$ produces a decomposition with the required properties. So we may suppose that $L$ has no $6$-cycle but has a $(4,8)$-decomposition or a $(4,4,4)$-decomposition.

\noindent {\bf Case 1.} Suppose that no vertex in $S$ has degree $6$ in $L$ and that $L$ is connected. It is routine to check that $L$ contains a path $[x_0,\ldots,x_6]$ of length $6$ and a vertex $y \notin \{x_0,\ldots,x_6\}$ such that $x_0,x_6 \in S$ and $x_0y$ and $x_2y$ are edges in $L$. By performing the $(x_0,x_6)$-switch with origin $x_1$ we obtain a repacking of $\P \setminus \{X_{3j-2},X_{3j-1},X_{3j}\}$ whose reduced leave has a $(6,6)$-decomposition and we are finished as above. (Note that if the terminus of the switch is $x_5$ then the reduced leave contains the $6$-cycle $(x_0,y,x_2,\ldots,x_5)$, and otherwise it contains the $6$-cycle $(x_1,\ldots,x_6)$.)

\noindent {\bf Case 2.} Suppose that no vertex in $S$ has degree $6$ in $L$ and that $L$ is disconnected. Then $L$ is a vertex-disjoint union of a copy of $K_{2,4}$ and a $4$-cycle. Let $x,y \in S$ be vertices such that $\deg_L(x)=4$ and $\deg_L(y)=2$. By performing an $(x,y)$-switch whose origin is adjacent to $x$ we obtain a repacking of $\P \setminus \{X_{3j-2},X_{3j-1},X_{3j}\}$ that satisfies the conditions of Case 1 and we can proceed as we did in that case.

\noindent {\bf Case 3.} Suppose that a vertex in $S$ has degree at least $6$ in $L$. By repeatedly applying Lemma~\ref{Lemma:EqualiseOneStep}, we can obtain a repacking of $\P \setminus \{X_{3j-2},X_{3j-1},X_{3j}\}$ that satisfies the conditions of either Case 1 or Case 2 and can proceed as we did in those cases (note that each application of Lemma~\ref{Lemma:EqualiseOneStep} is simply a switch on vertices in $S$).
\end{proof}

\subsection{Lists with few odd entries}

\begin{lemma}\label{Lemma:BaseDecomp_FewCrossOdds}
Let $u\geq 5$ and $w\geq 8$ be integers such that $u$ is odd and $w$ is even. Let $N$ be a list of integers and let $a$, $b$, $c$ and $d$ be nonnegative integers such that the following hold.
\begin{itemize}
	\item[(i)]
$(\sum N)-t +a+c =\binom{w+1}{2}$, where $t \in \{0,2,\ldots,w-2\}$;
	\item[(ii)]
$2a+4b+4c+6d+k+t=(u-1)w$, where $k=\eceil{\frac{t+2}{3}}$ if $t\geq 12$ and $k=0$ otherwise;
    \item[(iii)]
$3\leq \ell\leq \min (u, w)$ for each entry $\ell$ in $N$, and $d=0$ if $u=5$;
	\item[(iv)]
if $t>0$, there is some entry $m$ in $N$ such that $m\geq t+2$; and
    \item[(v)] either
\begin{itemize}
    \item
$a+c\geq 6$, $a+2c\leq w$, and $b \geq 1$; or
	\item
$a+c=3$, $(m,t) \neq (w,2)$, and $(a,c,t,u,w)\neq (0,3,6,9,8)$.
\end{itemize}
\end{itemize}
Then there exists an $(N,3^a,4^b,5^c,6^d,k)$-decomposition of $K_{u+w}-K_u$ that includes cycles with lengths $(3^a,4^{b},5^c,6^{d},k)$ that each contain at most one pure edge.
\end{lemma}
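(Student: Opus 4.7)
The plan is to decompose $K_{u+w}-K_u$ as the edge-disjoint union $K_{U \setminus U_1, W} \cup K_{W \cup U_1}$, where $U$ and $W$ are disjoint vertex sets of sizes $u$ and $w$ and $U_1 \subseteq U$ has size $1$. The cycles whose lengths appear in $N$ (with one of them modified) will be placed inside the complete graph $K_{W \cup U_1}$ on $w+1$ vertices, while the short cycles of lengths $(3^a,4^b,5^c,6^d,k)$ will come out of $K_{U \setminus U_1, W}$ together with a single leave cycle $C$ that the placement of $N$-cycles leaves behind inside $K_{W \cup U_1}$.

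First, I would choose an entry $m$ of $N$. If $t>0$, condition (iv) provides $m\geq t+2$; if $t=0$, any entry of $N$ works. Using (i), (iii), and the choice of $m$, one verifies that $a+c+m-t\in\{3,\ldots,w+1\}$ and that the sum of the lengths in $N\setminus(m)$ together with $a+c+m-t$ is exactly $\binom{w+1}{2}$. Since $w+1$ is odd, Theorem~\ref{Theorem:Alspach} then yields a decomposition of $K_{W\cup U_1}$ into cycles of the lengths in $N\setminus(m)$ together with one cycle $C$ of length $a+c+m-t$.

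Next, I would decompose $K_{U \setminus U_1, W} \cup C$ into cycles of lengths $(3^a,4^b,5^c,6^d,k,m)$ such that the $m$-cycle uses $m-t$ edges of $C$ (hence $t$ edges from $K_{U\setminus U_1,W}$), each $3$- or $5$-cycle uses exactly one pure edge of $C$, and each $4$-, $6$-, or $k$-cycle uses no edges of $C$. When $t\in\{0,2,4\}$ (so $k=0$), this is a direct application of Lemma~\ref{Lemma:BipartiteAndOneCycle} with $U'=U\setminus U_1$. When $t\geq 6$, I would first use Lemma~\ref{Lemma:Leave_ManyPathDecomp} to obtain a $(4^b,6^d,k)$-packing of $K_{U\setminus U_1,W}$ whose reduced leave is an $\ell$-path together with a $t$-path having end vertices in $W$, and then feed the result into Lemma~\ref{Lemma:PathsAndCycleToDecomp} (after some path manipulations to extract the $a$ paths of length $2$ and $c$ paths of length $4$ that Lemma~\ref{Lemma:PathsAndCycleToDecomp} requires). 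The subcase $a+c\geq 6$ of (v) produces more short odd cycles than a single bipartite leave can encode as paths; there, the hypotheses $a+2c\leq w$ and $b\geq 1$ permit an auxiliary application of Lemma~\ref{Lemma:3s5s} to pack many of the $3$- and $5$-cycles against a portion of $C$ first, with Lemma~\ref{Lemma:4s_to_6s} potentially used to adjust the $4$-to-$6$ ratio afterwards.

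The main obstacle is the arithmetic bookkeeping across the many parameter regimes, and in particular the verification of side conditions for each auxiliary lemma. The explicit exclusions $(m,t)\neq(w,2)$ in the second subcase of (v) and $(a,c,t,u,w)\neq(0,3,6,9,8)$ are precisely the configurations where one of Lemmas~\ref{Lemma:BipartiteAndOneCycle}, \ref{Lemma:Leave_ManyPathDecomp}, or \ref{Lemma:PathsAndCycleToDecomp} fails to apply or the required cycle $C$ cannot be extracted from $K_{W\cup U_1}$ at the specified length, so these cases either require a different choice of $m$ or a small direct construction.
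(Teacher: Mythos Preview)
Your overall architecture matches the paper's: split $K_{u+w}-K_u$ as $K_{U\setminus U_1,W}\cup K_{W\cup U_1}$, place $N\setminus(m)$ inside $K_{W\cup U_1}$ via Theorem~\ref{Theorem:Alspach}, and build the short cycles on the bipartite side using Lemmas~\ref{Lemma:BipartiteAndOneCycle}, \ref{Lemma:Leave_ManyPathDecomp}, \ref{Lemma:PathsAndCycleToDecomp}, \ref{Lemma:3s5s} and \ref{Lemma:4s_to_6s}. For the subcase $a+c=3$ your direct plan works essentially as written, and the two exceptional exclusions in (v) line up exactly with the obstructions you identify.

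The gap is in the subcase $a+c\geq 6$. You propose leaving a \emph{single} cycle $C$ of length $a+c+m-t$ in $K_{W\cup U_1}$ and then packing many $3$- and $5$-cycles ``against a portion of $C$'' via Lemma~\ref{Lemma:3s5s}. But Lemma~\ref{Lemma:3s5s} takes whole cycles in $K_W$ as input, not a sub-path of a longer cycle; and the alternative of applying Lemma~\ref{Lemma:BipartiteAndOneCycle} directly with the full $(a,c)$ can violate its hypothesis~(iii) (since $2a+4c+t$ may exceed $2w$ when $t>0$ and $a+2c$ is near $w$) and violates the bound $\ell\leq 12$ of Lemma~\ref{Lemma:Leave_ManyPathDecomp} when $t\geq 6$. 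The paper's remedy is structural rather than arithmetical: it leaves \emph{two} edge-disjoint cycles $C^*$ and $C^\dag$ in $K_{W\cup U_1}$, of lengths $a_2+c_2$ and $a_3+c_3+m-t$ respectively, where $(a,c)$ is split so that $a_3+c_3\leq 3$ and $C^*$ avoids the vertex of $U_1$. A further set $U_2\subseteq U\setminus U_1$ of size $2$ is carved off, and Lemma~\ref{Lemma:3s5s} consumes $C^*$ together with $K_{U_2,W}$ to produce the bulk of the $3$- and $5$-cycles (its leave being a copy of $K_{2,2b_2}$, i.e.\ $b_2$ bipartite $4$-cycles). The remaining piece $K_{U_3,W}\cup C^\dag$, with $U_3=U\setminus(U_1\cup U_2)$, is then handled by Lemma~\ref{Lemma:BipartiteAndOneCycle} (if $t\leq 4$) or Lemmas~\ref{Lemma:Leave_ManyPathDecomp} and~\ref{Lemma:PathsAndCycleToDecomp} (if $t\geq 6$) with the now-small parameters $(a_3,c_3)$, for which $2a_3+4c_3\leq 12$. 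Lemma~\ref{Lemma:4s_to_6s} is used exactly as you anticipate, to convert surplus $4$-cycles from the $K_{2,2b_2}$ leave into $6$-cycles when $b_2>b$.
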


\begin{proof}
Let $U$ and $W$ be disjoint sets of sizes $u$ and $w$ and let $U_1\subseteq U$ with $|U_1|=1$. Observe that $K_{U\cup W}-K_U=K_{U \setminus U_1,W} \cup K_{W \cup U_1}$. Let $m=0$ if $t=0$. We first choose integers $a_2$, $a_3$, $c_2$ and $c_3$. Let $(a_3,c_3)$ be the first pair from the appropriate row below such that $a_3\leq a$ and $c_3\leq c$, and let $a_2=a-a_3$ and $c_2=c-c_3$. It is routine to check using (v) that some pair in the appropriate row will always satisfy these conditions.
\begin{center}
\begin{tabular}{l|l}
  case & $(a_3,c_3)$ \\ \hline
  $a+c=3$ & $(a,c)$ \\
  $a+c \geq 6$, $t>0$, $a$ even & $(0,1), (2,0)$ \\
  $a+c \geq 6$, $t>0$, $a$ odd & $(1,0)$ \\
  $a+c \geq 6$, $t=0$, $a$ even & $(0,0)$ \\
  $a+c \geq 6$, $t=0$, $a$ odd & $(3,0), (1,2)$ \\
\end{tabular}
\end{center}
This choice ensures that $a_2$, $a_3$, $c_2$ and $c_3$ are nonnegative integers such that $a_2$ is even, $a_2+a_3=a$, $c_2+c_3=c$, $2a_3+4c_3 \leq 12$, $a_3+c_3 \in \{1,2,3\}$ if $t>0$, and $a_3+c_3 \in \{0,3\}$ if $t=0$.

We now construct packings $\P_1,\P_2,\P_3$ as follows (we justify that these packings exist later).
\begin{itemize}
	\item $\P_1$ is an $(N\setminus (m))$-packing of $K_{W\cup U_1}$. The reduced leave of $\P_1$ is $C^*\cup  C^\dag$, where $C^*$ is an $(a_2+c_2)$-cycle such that $U_1 \nsubseteq V(C^*)$ and $C^\dag$ is an $(m-t+a_3+c_3)$-cycle such that $U_1 \nsubseteq V(C^{\dag})$ if $t=0$.

Let $U_2\subseteq U\setminus U_1$ with $|U_2|=0$ if $(a_2,c_2)=(0,0)$ and $|U_2|=2$ otherwise. Let $b_2=\frac{1}{4}(|U_2|w-2a_2-4c_2)$. By (v) and the choice of $a_2$ and $c_2$, we have  $b_2 \in \{0,\ldots,\frac{w-4}{2}\}$.
	\item
If $|U_2|=2$, then $\P_2$ is a $(3^{a_2},4^{b_2},5^{c_2})$-decomposition of $K_{U_2,W}\cup C^*$ and, if $b_2>0$, the union of the $4$-cycles in $\P_2$ is a copy of $K_{2,2b_2}$. If $|U_2|=0$, then $\P_2=\emptyset$.
	\item
$\P_3$ is a $(3^{a_3},4^{b-b_2+3j},5^{c_3},6^{d-2j},k,m)$-decomposition of $K_{U_3,W}\cup C^\dag$, where $U_3=U\setminus (U_1\cup U_2)$ and
$$
j=\left\{\begin{array}{ll}
0 & \text{ if } b\geq b_2,\\
\lceil\frac{1}{3}(b_2-b)\rceil & \text{ otherwise.}
\end{array}
\right.$$
Furthermore, if $m>0$, there is an $m$-cycle in $\P_3$  that contains $m-t$ edges of $C^\dag$.\end{itemize}

Note that the union $\P'=\P_1 \cup \P_2 \cup \P_3$ will be an $(N,3^a,4^{b+3j},5^c,6^{d-2j},k)$-decomposition of $K_{U\cup W}-K_U$ and will contain cycles with lengths $(3^a,4^{b+3j},5^c,6^{d-2j},k)$ that each contain at most one pure edge (by the definition of $\P_3$ and $C^\dag$, $\P_3$ has cycles with lengths $(3^{a_3},4^{b-b_2+3j},5^{c_3},6^{d-2j},k)$ that each contain at most one pure edge).
If $b \geq b_2$, then $j=0$ and this will complete the proof. Otherwise $b_2 > b$ and we will be able to apply Lemma~\ref{Lemma:4s_to_6s} to $\P'$ to obtain a decomposition with the required properties provided we can find $3j$ $4$-cycles in $\P'$ that meet the hypotheses of Lemma~\ref{Lemma:4s_to_6s}. If $3j=b_2-b$, we will be able to use $3j$ $4$-cycles from $\P_2$. If $3j \in \{b_2-b+1,b_2-b+2\}$, then $b_2 \geq 3j+b-2 \geq 3j-1$ (since $b\geq 1$) and we will be able to use $3j-1$ $4$-cycles from $\P_2$ and any one $4$-cycle from $\P_3$. (It must be the case that $b \geq 1$ by (v) because $b_2 > b$ implies $(a_2,c_2) \neq (0,0)$ and $a+c \geq 6$ by our choices of $b_2$, $a_2$ and $c_2$.)

So it remains to establish the existence of the packings
$\P_1,\P_2,\P_3$. In what follows we will often use the facts that $w \geq 8$ and that either $(m,t)=(0,0)$ or $t < m \leq w$ (the latter follows from (iv)).

\noindent {\bf Proof that $\P_1$ exists.}
First observe that $a_2+c_2\in\{0\}\cup\{3,\ldots,w\}$ and $m-t+a_3+c_3\in\{0\} \cup \{3,\ldots,w+1\}$ by (iii), (iv), (v), our choice of $a_3$ and $c_3$ and the definition of $m$. Then, by Theorem~\ref{Theorem:Alspach}, a packing with the required properties exists by (iii) and because
\begin{align*}
{}&\textstyle\sum (N \setminus (m)) + |E(C^{\dag})| + |E(C^*)|  \\
={}& (\tbinom{w+1}{2}+t-(a+c)-m) + (m-t+a_3+c_3) + (a_2+c_2) \\
={}& \tbinom{w+1}{2}
\end{align*}
where the first equality follows by (i).
If $t=0$, then $|V(C^*)|+|V(C^{\dag})| = a+c \leq w$ by (v) and we can permute the vertices of this packing so that $U_1 \nsubseteq V(C^*) \cup V(C^\dag)$. If $t>0$, then $|V(C^*)| = a_2+c_2 \leq w$ by (v) and we can permute the vertices of this packing so that $U_1 \nsubseteq V(C^*)$.

\noindent {\bf Proof that $\P_2$ exists.} This is trivial if $(a_2,c_2)=(0,0)$ and $|U_2|=0$, so assume that $|U_2|=2$ and $(a_2,c_2) \neq (0,0)$. Then the definition of $b_2$ implies that $a_2+2c_2=w-2b_2$. So the existence of $\P_2$ follows immediately by Lemma~\ref{Lemma:3s5s} because $a_2+c_2\in \{\frac{1}{2}(w-2b_2),\ldots, w-2b_2\} \setminus \{1,2\}$ by (v), and our choice of $a_2$ and $c_2$.

\noindent {\bf Proof that $\P_3$ exists.} We will show that $\P_3$ exists using either Lemma~\ref{Lemma:BipartiteAndOneCycle} (if $t\in\{0,2,4\}$) or Lemmas~\ref{Lemma:Leave_ManyPathDecomp} and \ref{Lemma:PathsAndCycleToDecomp} (if $t\geq 6$). Note that $|U_3|=u-|U_2|-1\in\{u-3,u-1\}$ and hence $|U_3|\geq 4$, except when $u=5$ and $|U_2|=2$. We first establish two useful facts.

\begin{description}[leftmargin=0mm]
    \item[(a) $\bm{b-b_2+3j \geq 0}$ and $\bm{d-2j \geq 0}$.]
Obviously $b-b_2+3j \geq 0$ by the definition of $j$, and clearly $d-2j \geq 0$ if $b \geq b_2$ and hence $j=0$. So it remains to show that $d-2j \geq 0$ when $b < b_2$. Then $j=\lceil\frac{1}{3}(b_2-b)\rceil$. Observe that
\begin{equation}\label{fewCrossOddIneqs}
2a_3+4b+4c_3+6d+k+t = w|U_3|+4b_2
\end{equation}
by (ii) and the definitions of $a_3$, $c_3$, $b_2$ and $U_3$.
So it cannot be that $u=5$ because then $b>b_2$ by \eqref{fewCrossOddIneqs} (since $d=0$ by (iii), $|U_3| \geq 2$, $2a_3+4c_3\leq 12$ and, by (iii) and (iv), $t\leq 2$ and $k=0$). So assume that $u \geq 7$ and hence $|U_3| \geq 4$. Then it follows from \eqref{fewCrossOddIneqs} that $d \geq \frac{2}{3}(b_2-b)$ using the facts that $2a_3+4c_3 \leq 12$ and $k \leq t \leq w-2$. So we have $d-2j \geq 0$.
\item[(b) $\bm{2a_3+4(b-b_2+3j)+4c_3+6(d-2j)+k+t = |U_3|w}$.] Observe that
\begin{align*}
  &2a_3+4(b-b_2+3j)+4c_3+6(d-2j)+k+t \\
  ={}& (2a+4b+4c+6d+k+t)-(2a_2+4b_2+4c_2) \\
  ={}& |U_3|w
\end{align*}
where the final equality follows by (ii) and because $2a_2+4b_2+4c_2=|U_2|w$ by the definitions of $U_2$ and $b_2$.
\end{description}

\noindent \textbf{Case 1.} Suppose that $t\in\{0,2,4\}$. Then $k=0$. We claim that $\P_3$ exists by Lemma~\ref{Lemma:BipartiteAndOneCycle}. To see that we can apply Lemma~\ref{Lemma:BipartiteAndOneCycle}, note that $|U_3| \geq 2$ and that $d=0$ if $|U_3|=2$ by (iii). Also, using $2a_3+4c_3 \leq 12$, $2a_3+4c_3+t \leq 2w$. Finally, $a_3+c_3\in\{0,3\}$ if $t=0$ and $a_3+c_3\in\{1,\ldots,w-m+\frac{t}{2}+1\}$ if $t \in \{2,4\}$ by (v) and our choice of $a_3$ and $c_3$.

\noindent \textbf{Case 2.} Suppose that $t \geq 6$. Then $u \geq 9$  because $u \geq m \geq 8$ by (iii) and (iv). Also, $|U_3|\geq u-3 \geq 6$. By Lemma~\ref{Lemma:PathsAndCycleToDecomp}, to show that $\P_3$ exists it suffices to find a $(4^{b-b_2+3j},6^{d-2j},k)$-packing of $K_{U_3,W}$ whose reduced leave has a decomposition into a $t$-path and a $(2a_3+4c_3)$-path with end vertices in $W$ (note that $1 \leq a_3+c_3\leq 3\leq w-m+\frac{t}{2}+1$). By Lemma~\ref{Lemma:Leave_ManyPathDecomp}, to find such a packing it suffices to show that
$\max(k+2,2a_3+4c_3,8)+t\leq 2|U_3|+2$ and $(2a_3+4c_3,t,|U_3|,w)\neq (12,6,8,8)$ (note that $2a_3+4c_3 \in \{2,4,\ldots,12\}$). We have $\max(k+2,8)+t\leq 2|U_3|+2$ because $t \leq u-3$ (by (iii) and (iv)), $k\leq \frac{t+7}{3}$ and $|U_3|\geq u-3$. We have $2a_3+4c_3+t\leq 2|U_3|+2$ because $t \leq u-3$, $2a_3+4c_3 \leq 12$ and either $2a_3+4c_3 \leq 6$ or $|U_3|=u-1$ (by our choice of $a_3$ and $c_3$ and the definitions of $U_2$ and $U_3$). It follows directly from (v) that $(2a_3+4c_3,t,|U_3|,|W|)\neq (12,6,8,8)$.
\end{proof}

\subsection{Lists with many large entries}

\begin{lemma}\label{Lemma:BaseDecomp_ManyCrossOdds_large_m}
Let $u \geq 7$ and $w\geq 8$ be integers such that $u$ is odd and $w$ is even. Let $N$ be a list of integers and let $a$, $b$, $c$ and $d$ be nonnegative integers such that the following conditions hold.
\begin{itemize}
	\item[(i)]
$(\sum N)-t +a+c =\binom{w}{2}$, where $t \in \{2,4,\ldots,w-2\}$;
	\item[(ii)]
$2a+4b+4c+6d+k+t=uw$, where $k=\eceil{\frac{t+2}{3}}$ if $t\geq 12$ and $k=0$ otherwise;
	\item[(iii)]
$3\leq \ell \leq \min (u, w)$ for each entry $\ell$ in $N$;
	\item[(iv)]
$a\geq \frac{w}{2}+1$, $b\geq 1$ and $c\leq 1$;
	\item[(v)]
there is some entry $m\geq \max(t+2,7)$ in $N$ such that $uw\geq (a+c)\efloor{m}$ if  $a+2c> \frac{3w}{2}+3$;
	\item[(vi)]
$(m,t)\neq(w,2)$, and if $a\geq \frac{w}{2}+4$, $(m,t) \notin \{(w-1,2),(w,4)\}$.
\end{itemize}
Then there exists an $(N,3^a,4^b,5^c,6^d,k)$-decomposition of $K_{u+w}-K_u$ that includes cycles with lengths $(3^a,4^{b},5^c,6^{d},k)$ that each contain at most one pure edge.
\end{lemma}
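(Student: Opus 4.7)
The plan is to mirror the structure of the proof of Lemma~\ref{Lemma:BaseDecomp_FewCrossOdds}, but with $K_{u+w}-K_u$ viewed as $K_W \cup K_{U,W}$ (rather than introducing an auxiliary vertex from $U$ into a $K_{W \cup U_1}$). Since $K_W$ with $|W|=w$ even is not cycle-decomposable, the plan is to fix a $1$-factor $I$ of $K_W$ and apply Theorem~\ref{Theorem:Alspach} to the cycle-decomposable graph $K_W-I$. The $1$-factor $I$ will be absorbed into a block of $w/2$ triangles, each of the form $(u_1, w, w')$ for $ww' \in I$ and a distinguished vertex $u_1 \in U$; these contribute $w/2$ of the targeted $a$ triangles (each containing exactly one pure edge) and consume all $w$ cross edges at $u_1$. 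The hypothesis $a \geq \frac{w}{2}+1$ from (iv) is precisely what ensures enough triangles are available for this absorption.

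Concretely, let $U$ and $W$ be disjoint sets of sizes $u$ and $w$, pick $m \in N$ satisfying (v), and partition the short-cycle counts as in the proof of Lemma~\ref{Lemma:BaseDecomp_FewCrossOdds}: $a = a_2 + a_3$ with $a_3 \geq w/2$, $c = c_2 + c_3$, and $b = b_2 + b_3$, together with a suitable partition of $U \setminus \{u_1\}$ into $U_2 \cup U_3$. The decomposition is assembled as the union of four packings: (a)~the $w/2$ triangles at $u_1$ that absorb $I$; (b)~an $(N \setminus (m))$-packing $\P_1$ of $K_W - I$ provided by Theorem~\ref{Theorem:Alspach}, whose reduced leave is $C^* \cup C^\dag$, where $C^*$ is an $(a_2 + c_2)$-cycle (split into several cycles of lengths at most $w$ in the large-$a$ regime $a+2c > \frac{3w}{2}+3$) and $C^\dag$ is a cycle absorbing the remainder; (c)~a $(3^{a_2}, 4^{b_2}, 5^{c_2})$-packing $\P_2$ of $K_{U_2, W} \cup C^*$ from Lemma~\ref{Lemma:3s5s}; and (d)~a $(3^{a_3 - w/2}, 4^{b_3+3j}, 5^{c_3}, 6^{d-2j}, k, m)$-packing $\P_3$ of $K_{U_3, W} \cup C^\dag$ obtained via Lemma~\ref{Lemma:BipartiteAndOneCycle} when $t \in \{2,4\}$ or via Lemmas~\ref{Lemma:Leave_ManyPathDecomp} and \ref{Lemma:PathsAndCycleToDecomp} when $t \geq 6$, with the $m$-cycle containing $m-t$ edges of $C^\dag$. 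If $b_2 > b$, Lemma~\ref{Lemma:4s_to_6s} is invoked to convert excess $4$-cycles into $6$-cycles in triples.

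The main obstacle is the detailed bookkeeping required to verify that the numerical hypotheses of Theorem~\ref{Theorem:Alspach}, Lemma~\ref{Lemma:3s5s}, Lemma~\ref{Lemma:BipartiteAndOneCycle}, and Lemmas~\ref{Lemma:Leave_ManyPathDecomp}--\ref{Lemma:PathsAndCycleToDecomp} are simultaneously satisfied in every parameter regime. The exclusions in (vi) handle boundary $(m,t)$-values at which the natural length of $C^\dag$ would violate Theorem~\ref{Theorem:Alspach}, while the auxiliary inequality $uw \geq (a+c)\efloor{m}$ in (v) is needed only in the large-$a$ regime $a + 2c > \frac{3w}{2}+3$, where $C^*$ must be split into multiple cycles whose lengths all lie in $\{\frac{w}{2},\ldots,w\}$ so as to fit the length constraints of Lemma~\ref{Lemma:3s5s}. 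The bulk of the work, as in the proof of Lemma~\ref{Lemma:BaseDecomp_FewCrossOdds}, lies in showing that suitable choices of $a_2, a_3, b_2, c_2, c_3, j$, $U_2$, and $U_3$ exist so that each applied sublemma's premises hold, and that the resulting small cycles each contain at most one pure edge.
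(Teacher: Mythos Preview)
Your proposal is correct and follows essentially the same approach as the paper: view $K_{u+w}-K_u$ as $K_{U,W}\cup K_W$, peel off a $1$-factor $I$ of $K_W$ and absorb it into $\frac{w}{2}$ triangles through a single vertex of $U$, apply Theorem~\ref{Theorem:Alspach} to $K_W-I$ to get an $(N\setminus(m))$-packing with leave $C^\dag\cup C_1\cup\cdots\cup C_n$, use Lemma~\ref{Lemma:3s5s} on $K_{U_2,W}\cup C_1\cup\cdots\cup C_n$, use Lemma~\ref{Lemma:BipartiteAndOneCycle} or Lemmas~\ref{Lemma:Leave_ManyPathDecomp}/\ref{Lemma:PathsAndCycleToDecomp} on $K_{U_3,W}\cup C^\dag$, and fix any $4$-to-$6$ imbalance with Lemma~\ref{Lemma:4s_to_6s}. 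One small refinement: the condition $uw\geq(a+c)\efloor{m}$ in (v) is used in the paper not to constrain the cycle lengths for Lemma~\ref{Lemma:3s5s} directly, but rather to guarantee that $|U_3|=u-2n-1\geq 4$ when $n\geq 2$, so that Lemma~\ref{Lemma:BipartiteAndOneCycle} or Lemma~\ref{Lemma:Leave_ManyPathDecomp} can be applied on $U_3$.
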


\begin{proof}
Let $U$ and $W$ be disjoint sets of sizes $u$ and $w$ and observe that $K_{U\cup W}-K_U=K_{U,W} \cup K_{W}$. We first choose integers $a_2$, $a_3$, $c_2$ and $c_3$. Let $(a_3,c_3)$ be the first pair from the appropriate row below such that $a_3\leq a$, $c_3\leq c$, and $a_2+2c_2 \not\equiv 2 \mod{w}$ where $a_2=a-\frac{w}{2}-a_3$ and $c_2=c-c_3$. It is routine to check using (iv) that some pair in the appropriate row will always satisfy these conditions.
\begin{center}
\begin{tabular}{l|l}
  case & $(a_3,c_3)$ \\ \hline
  $a - \frac{w}{2}$ even & $(0,1), (2,0), (2,1), (4,0)$ \\
  $a - \frac{w}{2}$ odd & $(1,0), (1,1), (3,0)$ \\
\end{tabular}
\end{center}
This choice ensures that $a_2$, $a_3$, $c_2$ and $c_3$ are nonnegative integers such that $a_2$ is even, $\frac{w}{2}+a_2+a_3=a$, $c_2+c_3=c$, $a_3+c_3 \in \{1,2,3,4\}$, and $2a_3+4c_3 \leq 8$.

We now construct packings $\P_0,\ldots,\P_3$ as follows (we justify that these packings exist later).
\begin{itemize}
	\item
$\P_0$ is an $(N\setminus(m))$-packing of $K_{W}-I$, where $I$ is a $1$-factor on vertex set $W$. The reduced leave of $\P_0$ is a union of cycles
$C^\dag \cup C_1 \cup \cdots\cup C_{n}$, where
\begin{itemize}
    \item
$n$ and $b_2$ are the nonnegative integers such that $a_2+2c_2=nw-2b_2$ and $0 \leq b_2 \leq \frac{w-4}{2}$ (note that $a_2+2c_2 \not\equiv 2 \mod{w}$);
    \item
$C^\dag$ is an $(m-t+a_3+c_3)$-cycle;
    \item if $a_2+c_2>0$,
$|V(C_i)| \in\{\frac{w}{2},\ldots, w\}$ for $2\leq i\leq n$, and $|V(C_1)| \in \{\frac{1}{2}(w-b_2),\ldots, w-b_2\} \setminus \{1,2\}$;
    \item
$|V(C_1)|+\cdots+|V(C_{n})|=a_2+c_2$.
\end{itemize}
The cycle lengths $|V(C_1)|,\ldots,|V(C_{n})|$ exist by Lemma~\ref{Lemma:3s5s}, noting that $c_2 \leq 1$ by (iv), that $a_2$ is even, and that $a_2+2c_2 \not\equiv 2 \mod{w}$.
    \item
$\P_1$ is a $(3^{w/2})$-decomposition of $K_{U_1,W}\cup I$ for some $U_1 \subseteq U$ with $|U_1|=1$.
    \item
$\P_2$ is a $(3^{a_2},4^{b_2},5^{c_2})$-decomposition of $K_{U_2,W}\cup C_1\cup \cdots\cup C_n $, where $U_2 \subseteq U \setminus U_1$ with $|U_2|=2n$ and, if $b_2 > 0$, the union of the $4$-cycles in $\P_2$ is a copy of $K_{2,2b_2}$.
    \item
$\P_3$ is a $(3^{a_3},4^{b-b_2+3j},5^{c_3},6^{d-2j},k,m)$-decomposition of $K_{U_3,W} \cup C^\dag$, where $U_3=U\setminus (U_1\cup U_2)$ and
$$
j=\left\{\begin{array}{ll}
0 & \text{ if } b\geq b_2,\\
\lceil\frac{1}{3}(b_2-b)\rceil & \text{ otherwise.}
\end{array}
\right.$$

Furthermore, if $m>0$, there is an $m$-cycle in $\P_3$  that contains $m-t$ edges of $C^\dag$.
\end{itemize}

Note that the union $\P'=\P_0 \cup \cdots \cup \P_3$ will be an $(N,3^a,4^{b+3j},5^c,6^{d-2j},k)$-decomposition of $K_{U \cup W}-K_U$ and will contain cycles with lengths $(3^a,4^{b+3j},5^c,6^{d-2j},k)$ that each contain at most one pure edge (by the definition of $\P_3$ and $C^\dag$,  $\P_3$ has cycles with lengths $(3^{a_3},4^{b-b_2+3j},5^{c_3},6^{d-2j},k)$ that each contain at most one pure edge). If $b \geq b_2$, then $j=0$ and this will complete the proof. Otherwise $b_2 > b$ and we will be able to apply Lemma~\ref{Lemma:4s_to_6s} to $\P'$ to obtain a decomposition with the required properties provided we can find $3j$ $4$-cycles in $\P'$ that meet the hypotheses of Lemma~\ref{Lemma:4s_to_6s}. If $3j=b_2-b$, then $b_2 \geq 3j$ and we will be able to use $3j$ $4$-cycles from $\P_2$. If $3j \in \{b_2-b+1,b_2-b+2\}$, then $b_2 \geq 3j-1$ because $b \geq 1$ by (iv) and we will be able to use $3j-1$ $4$-cycles from $\P_2$ and any one $4$-cycle from $\P_3$.

So it remains to establish the existence of the packings $\P_0,\ldots,\P_3$. Obviously $\mathcal{P}_1$ exists.

\noindent {\bf Proof that $\P_0$ exists.}
First observe that $m-t+a_3+c_3 \in \{3,\ldots,w\}$  by (iii), (v), (vi)  and our choices of $a_3$ and $c_3$. Then, by Theorem~\ref{Theorem:Alspach}, a packing with the required properties exists by (iii) and because
\begin{align*}
{}&\textstyle\sum (N \setminus (m)) + |E(C^{\dag})| + |E(C_1)| + \cdots + |E(C_{n})|  \\
={}& (\tbinom{w}{2}+t-(a+c)-m) + (m-t+a_3+c_3) + (a_2+c_2)\\
={}& \tbinom{w}{2}-\tfrac{w}{2}.
\end{align*}
The first equality follows by (i) and the definitions of $C^\dag$ and $C_1,\ldots,C_{n}$. The second equality follows because $a_2+a_3+\frac{w}{2}=a$ and $c_2+c_3=c$.

\noindent {\bf Proof that $\P_2$ exists.} This is trivial if $(a_2,c_2)=(0,0)$. If $(a_2,c_2)\neq 0$, then this follows immediately by Lemma~\ref{Lemma:3s5s} because $|V(C_i)| \in\{\frac{w}{2},\ldots, w\}$ for $2\leq i\leq n$, $|V(C_1)| \in \{\frac{1}{2}(w-b_2),\ldots, w-b_2\} \setminus \{1,2\}$ and $|V(C_1)|+\cdots+|V(C_{n})|=a_2+c_2$.

\noindent {\bf Proof that $\P_3$ exists.}  We will show that $\P_3$ exists using either Lemma~\ref{Lemma:BipartiteAndOneCycle} (if $t\in\{2,4\}$) or Lemmas~\ref{Lemma:Leave_ManyPathDecomp} and \ref{Lemma:PathsAndCycleToDecomp} (if $t\geq 6$). We first establish some useful facts. Recall that $|E(C^{\dag})|=m-t+a_3+c_3$ and note that $m>t$.

\begin{description}[leftmargin=0mm]
    \item[(a) $\bm{b-b_2+3j \geq 0}$ and $\bm{d-2j \geq 0}$.]
Obviously $b-b_2+3j \geq 0$ by the definition of $j$ and we can establish that $d-2j \geq 0$ by a similar argument to the one used in the proof of Lemma~\ref{Lemma:BaseDecomp_FewCrossOdds}.
    \item[(b) $\bm{2a_3+4(b-b_2+3j)+4c_3+6(d-2j)+k+t=w|U_3|}$.]
Observe that
\begin{align*}
{}& 2a_3+4(b-b_2+3j)+4c_3+6(d-2j)+k+t  \\
={}& 2(a-a_2-\tfrac{w}{2})+4(b-b_2)+4(c-c_2)+6d+k+t\\
={}& (2a+4b+4c+6d+k+t)-(2a_2+4b_2+4c_2)-w \\
={}& w|U_3|.
\end{align*}
The first equality follows because $a_2+a_3+\frac{w}{2}=a$ and $c_2+c_3=c$. The final equality follows because $2a+4b+4c+6d+k+t=uw$ by (ii), $2a_2+4b_2+4c_2=w|U_2|$ by the definition of $U_2$, and $|U_3|=u-|U_2|-1$.
    \item[(c) $\bm{|U_3| \geq 4}$.]
Recall $|U_3|=u-2n-1$. If $n\leq 1$, then $|U_3|\geq 4$ because $u\geq 7$. So suppose $n\geq 2$. By the definition of $n$, this implies that $a_2+2c_2 \geq w+4$ and hence that $a+2c \geq \frac{3w}{2}+4$ because $a \geq a_2+\frac{w}{2}$ and $c \geq c_2$. Thus, using (v),
\[uw \geq (a+c)\efloor{m} \geq (\tfrac{w}{2}+a_2+c_2)\efloor{m}.\]
Now $2a_2+4c_2+4b_2=2nw$ by the definitions of $n$ and $b_2$, and hence $a_2+c_2=nw-2b_2-c_2$. Observe that $c_2 \leq 1$ by (iv) and so $a_2+c_2 \geq nw-2b_2-1$.
Using this fact, we have $uw \geq ((n+\frac{1}{2})w-2b_2-1)\efloor{m}$. Rearranging yields
\[u-2n-1 \geq (n+\tfrac{1}{2})(\efloor{m}-2)-\tfrac{1}{w}(2b_2+1)\efloor{m}.\]
Because $b_2 \leq \frac{w-4}{2}$ and hence $\tfrac{1}{w}(2b_2+1)<1$, we see that
\begin{equation}\label{Eq:UDashBound}
u-2n-1 > (n-\tfrac{1}{2})(\efloor{m}-2)-2.
\end{equation}
So because $n \geq 2$ and $\efloor{m} \geq 6$ by (v), we have $|U_3|=u-2n-1 \geq 4$.
\end{description}

\noindent \textbf{Case 1.}
Suppose that $t\in\{2,4\}$. Then $k=0$. So $\P_3$ exists by Lemma~\ref{Lemma:BipartiteAndOneCycle} because $|U_3| \geq 4$, $a_3+c_3 \in \{1,2,3,4\}$, $2a_3+4c_3 \leq 8$, and $w-m+\frac{t}{2}+1 \geq a_3+c_3$ by (vi) (by our choice of $a_3$ and $c_3$, $a \geq \frac{w}{2}+4$ if $a_3+c_3=4$).

\noindent \textbf{Case 2.} Suppose that $t \geq 6$. Then $u \geq 9$ because $u \geq m \geq 8$ by (iii) and (v). By Lemma~\ref{Lemma:PathsAndCycleToDecomp}, to show that $\P_3$ exists it suffices to find a $(4^{b-b_2+3j},6^{d-2j},k)$-packing of $K_{U_3,W}$ whose reduced leave has a decomposition into a $t$-path and a $(2a_3+4c_3)$-path (note that $a_3+c_3 \in \{1,2,3,4\}$).

Noting that $2a_3+4c_3 \leq 8$, by Lemma~\ref{Lemma:Leave_ManyPathDecomp}, to find such a packing it suffices to show that $\max(k+2,8)+t\leq 2|U_3|+2$. Note that $2|U_3|+2=2u-4n$. If $n \in \{0,1\}$, this holds because $u \geq 9$, $k \leq \frac{t+7}{3}$, and $t \leq u-3$ by (iii) and (v). So suppose that $n \geq 2$. We have
\[2u-4n > (2n-1)(\efloor{m}-2)-2 \geq 3t-2,\]
where the first inequality follows by multiplying \eqref{Eq:UDashBound} by 2, and the second holds because $n \geq 2$ and $\efloor{m} \geq t+2$ by (v). Thus $\max(k+2,8)+t\leq 2u-4n$ holds because $t \geq 6$ and $k \leq \frac{t+7}{3}$.
\end{proof}

\subsection{Lists with few large entries}

Lemma~\ref{Lemma:BaseDecomp_ManyCrossOdds_small_m} is the most intricate of our base decomposition lemmas and requires some more preliminary results. Lemma~\ref{Lemma:eqFournier} is an edge-colouring result that is easily obtained by combining a theorem of Fournier \cite{Fournier73} with the well-known result that any graph with chromatic index at most $\ell$ has a proper edge-colouring with $\ell$ colours such that the sizes of any two colour classes differ by at most one (see \cite{McDiarmid72}). Lemma \ref{Lemma:1factor5cycles} will allow us to decompose the union of $K_{3,w}$ and a graph on the part of size $w$ whose vertices have odd degrees. This is useful when $a$ is small. Lemmas~\ref{Lemma:SmalltLeave} and \ref{Lemma:SmalltLeave2} are more results giving cycle packings of the union of a complete bipartite graph with one or more cycles. The hypotheses of Lemmas~\ref{Lemma:SmalltLeave} and \ref{Lemma:SmalltLeave2} both concern the quantity $\rho=2a+4c+t$. This is the number of edges of $K_{U,W}$ that will be used in the $m$-cycle that contains $m-t$ edges of $K_W$, and the $a$ $3$-cycles and $c$ $5$-cycles that each contain one edge of $K_W$.

\begin{lemma}[{\cite{Fournier73,McDiarmid72}}]\label{Lemma:eqFournier}
Let $G$ be a graph with maximum degree $\ell$. If the subgraph of $G$ induced by the vertices of degree $\ell$ contains no cycle, then $G$ has a proper edge-colouring with $\ell$ colours such that the sizes of any two colour classes differ by at most one.
\end{lemma}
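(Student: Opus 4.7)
The plan is to combine the two cited results in the straightforward way suggested by the statement itself. First, apply Fournier's theorem from \cite{Fournier73}, which asserts that a graph $G$ whose subgraph induced by the vertices of maximum degree $\ell$ is acyclic is necessarily class~1, i.e.\ it admits a proper edge-colouring with exactly $\ell$ colours (equivalently, $\chi'(G)=\ell$). This is precisely the hypothesis we are handed, so $G$ has some proper $\ell$-edge-colouring.

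Next, apply McDiarmid's equitability theorem from \cite{McDiarmid72}: if a graph has chromatic index at most $\ell$, then it admits a proper $\ell$-edge-colouring whose colour classes are equitable, meaning any two classes differ in size by at most one. Since Fournier's theorem gives us $\chi'(G)\le\ell$, McDiarmid's result can be invoked with this value of $\ell$ to produce the required equitable $\ell$-edge-colouring.

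Thus the proof is simply a two-line citation chain: Fournier $\Rightarrow$ $\chi'(G)\le \ell$, then McDiarmid $\Rightarrow$ an equitable proper $\ell$-edge-colouring exists. There is no real obstacle; the only subtlety is to note that McDiarmid's theorem is stated for any upper bound on the chromatic index and does not require $\ell$ to equal $\chi'(G)$ exactly, so the conclusion immediately yields colour classes of sizes differing by at most one.
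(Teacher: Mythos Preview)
Your proposal is correct and matches the paper's approach exactly: the paper does not give a formal proof but simply remarks (in the paragraph introducing the lemma) that the result ``is easily obtained by combining a theorem of Fournier \cite{Fournier73} with the well-known result that any graph with chromatic index at most $\ell$ has a proper edge-colouring with $\ell$ colours such that the sizes of any two colour classes differ by at most one (see \cite{McDiarmid72}).'' Your two-step citation chain is precisely this.
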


\begin{lemma}\label{Lemma:1factor5cycles}
Let $G$ be a graph with vertex set $W$ such that $|E(G)| =\lceil\frac{3}{4}|W|\rceil$ and each vertex of $G$ has degree $1$ or $3$. Let $\{p_1,p_2,p_3\}$ be a set of three vertices not in $W$ and let $\a\b$ be an edge of $G$. Then there exists a $(5^{\lfloor3|W|/4\rfloor})$-packing $\P$ of $K_{\{p_1,p_2,p_3\},W} \cup G$ such that each cycle in $\P$ contains exactly one edge from $G$ and
\begin{itemize}
    \item
if $|W|\equiv 0\mod{4}$, then the reduced leave of $\P$ is empty;
    \item
if $|W|\equiv 2\mod{4}$, then the reduced leave of $\P$ is the $3$-cycle $(p_3,\a,\b)$.
\end{itemize}
\end{lemma}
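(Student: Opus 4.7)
The plan is to proceed by induction on $|W|$. First, the degree conditions force a rigid structure on $G$: letting $d_i$ denote the number of degree-$i$ vertices, the equations $d_1+d_3=|W|$ and $d_1+3d_3=2\lceil 3|W|/4\rceil$ determine $d_1$ and $d_3$, and counting edges between degree-$3$ vertices then shows there are none when $|W|\equiv 0\pmod 4$ and exactly one when $|W|\equiv 2\pmod 4$. Consequently, if $|W|\equiv 0\pmod 4$ then $G$ is a vertex-disjoint union of $|W|/4$ copies of $K_{1,3}$, and if $|W|\equiv 2\pmod 4$ then $G$ is the vertex-disjoint union of one \emph{double star} $D$ (two copies of $K_{1,3}$ joined by an edge between their centres, giving six vertices and five edges) together with $(|W|-6)/4$ further copies of $K_{1,3}$. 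The central gadget is the observation that for any such $K_{1,3}$ with centre $u$ and leaves $v_1,v_2,v_3$, the three $5$-cycles
\[
(u,v_1,p_2,v_2,p_1),\quad (u,v_2,p_3,v_3,p_2),\quad (u,v_3,p_1,v_1,p_3)
\]
form a $(5^3)$-decomposition of $K_{\{p_1,p_2,p_3\},\{u,v_1,v_2,v_3\}}\cup K_{1,3}$ in which each $5$-cycle contains exactly one edge of the $K_{1,3}$; this is verified by direct inspection.

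Given the gadget, the inductive step is immediate. If $K_{1,3}^*$ is any component of $G$ not containing $\alpha\beta$, then setting $W'=W\setminus V(K_{1,3}^*)$ and $G'=G-V(K_{1,3}^*)$ preserves all the hypotheses of the lemma (the degree and edge-count conditions are easily checked), so by induction there is a suitable packing of $K_{\{p_1,p_2,p_3\},W'}\cup G'$; adjoining the gadget applied to $K_{1,3}^*$ yields the required packing of $K_{\{p_1,p_2,p_3\},W}\cup G$, and since the gadget has empty leave the overall leave is unchanged. This reduces the problem to three small base cases: $|W|=4$ (handled by a single application of the gadget, with empty leave); $|W|=6$ with $G=D$ and $\alpha\beta\in E(D)$, split according to whether $\alpha\beta$ is the central edge of $D$ or a leaf edge; and $|W|=10$ with $G=D\cup K_{1,3}^*$ and $\alpha\beta\in E(K_{1,3}^*)$. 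For each of these base cases I will write down an explicit decomposition of the appropriate graph into $5$-cycles whose leave is exactly the $3$-cycle $(p_3,\alpha,\beta)$, then verify edge usage directly.

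The main obstacle is the $|W|=10$ base case. Here $K_{1,3}^*$ minus $\alpha\beta$ contributes only two $5$-cycles to the packing, but a simple parity count---each leaf of $G$ must be the middle vertex of exactly one $5$-cycle while each degree-$3$ vertex is a middle of none---forces all three leaves of $K_{1,3}^*$ to appear as middles somewhere, which is impossible using only the two cycles from $K_{1,3}^*$ itself. Consequently $K_{1,3}^*$ must be bundled with the double star $D$ into a joint $7$-cycle construction in which middle vertices freely cross between the two components; the $|W|=6$ case is analogous but easier, since $D$ can then be handled on its own. Writing down these explicit $4$- and $7$-cycle constructions and checking that every edge of $K_{\{p_1,p_2,p_3\},V(D)\cup V(K_{1,3}^*)}$ is used exactly once except for $p_3\alpha$ and $p_3\beta$, and that each $5$-cycle carries exactly one edge of $G$, is the only substantial piece of calculation in the proof.
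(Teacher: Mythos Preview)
Your structural claim about $G$ is false, and the whole argument rests on it. You assert that counting forces the number $e_{33}$ of edges between degree-$3$ vertices to be $0$ or $1$ according to $|W|\bmod 4$, but the count only yields $e_{11}-e_{33}=(d_1-3d_3)/2$, which is $0$ when $|W|\equiv 0\pmod 4$ and $-1$ when $|W|\equiv 2\pmod 4$; the two quantities $e_{11}$ and $e_{33}$ can slide together. For a concrete counterexample with $|W|=12$, take $G$ to be a perfect matching on $\{1,\ldots,12\}$ together with the triangle $(1,3,5)$: here $e_{33}=e_{11}=3$, and $G$ is certainly not three disjoint copies of $K_{1,3}$. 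In fact, the only place in the paper where this lemma is invoked is precisely with $G = I \cup C^\star$, a $1$-factor plus a short cycle, which never has your claimed star-forest structure.

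The paper's proof handles arbitrary $G$ by a different device: it subdivides every edge of $G$ (or of $G\setminus\{\alpha\beta\}$ when $|W|\equiv 2\pmod 4$) to obtain a graph $H$ of maximum degree $3$ in which at most one pair of degree-$3$ vertices are adjacent, applies an equitable proper $3$-edge-colouring $\gamma$ of $H$ (Lemma~\ref{Lemma:eqFournier}), and reads each $5$-cycle off the colouring: the edge $yz$ of $G$ contributes the cycle $(p_{\gamma(yv_{yz})},y,z,p_{\gamma(zv_{yz})},f(v_{yz}))$, where $f$ is a bijection from the subdivision vertices to the degree-$1$ vertices of $G$ matching the ``missing colour'' at each end. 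The balance of the colour classes is exactly what guarantees such a bijection exists. Your $K_{1,3}$ gadget is the special case of this when the component really is a star, but the edge-colouring idea is what carries the general case.
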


\begin{proof}
Let $w=|W|$. Let $B=\{x \in W:\deg_G(x)=3\}$ and note that $|B|=\lceil\frac{3w}{4}\rceil-\frac{w}{2}$.

\noindent {\bf Case 1.} Suppose that $w\equiv 0\mod{4}$. Then $|E(G)| = \frac{3w}{4}$. Let $E=E(G)$ and let $W'=\{v_e:e \in E\}$ be a set of $|E|$ vertices disjoint from $W \cup \{p_1,p_2,p_3\}$. Let $H$ be the graph obtained from $G$ by adding the vertices in $W'$ and then replacing each edge $yz \in E$ with the two edges $yv_{yz}$ and $zv_{yz}$.

Note that the maximum degree of $H$ is $3$, no two vertices of degree 3 are adjacent in $H$, and $|E(H)|= \frac{3w}{2}$. So by Lemma~\ref{Lemma:eqFournier} there exists a proper $3$-edge colouring $\gamma$ of $H$ with colour set $\{1,2,3\}$ such that $|\gamma^{-1}(i)|=\frac{1}{3}|E(H)|=\frac{w}{2}$ for $i \in \{1,2,3\}$. For each vertex $x \in V(H)$ we denote by $\gamma(x)$ the set of colours assigned by $\gamma$ to the edges incident with $x$. For $i \in \{1,2,3\}$, let $W'_i=\{x \in V(H):\gamma(x)=\{1,2,3\}\setminus\{i\}\}$ and $A_i=\{x \in W: \gamma(x)=\{i\}\}$. Let $A=\{x \in W:\deg_G(x)=1\}$.

We shall show that there is a bijection $f:W' \rightarrow A$ such that $f(W'_i) = A_i$ for each $i \in \{1,2,3\}$. Then
$$\mathcal{P}=\{(p_{\gamma(yv_{yz})},y,z,p_{\gamma(zv_{yz})},f(v_{yz})):yz \in E\}$$
will form a packing of $K_{\{p_1,p_2,p_3\},W} \cup G$ with $|E|$ $5$-cycles, each of which contains four edges of $K_{\{p_1,p_2,p_3\},W}$ and one edge in $E$. Thus the reduced leave of $\mathcal{P}$ will be empty. So it suffices to show that such a bijection $f$ exists, and hence it suffices to show that $|W'_i| = |A_i|$ for $i \in \{1,2,3\}$.

Obviously $|W'_1|+|W'_2|+|W'_3|=|E|$. Because each edge of $H$ is incident with exactly one vertex in $W'$, we have $|\gamma^{-1}(k)|=|W'_i|+|W'_j|$ for $\{i,j,k\}=\{1,2,3\}$. So, because the colour classes of $\gamma$ have equal size, it follows that $|W'_1|=|W'_2|=|W'_3|=\frac{1}{3}|E|=\frac{w}{4}$. Furthermore, for any $\{i,j,k\}=\{1,2,3\}$, we have $2|\gamma^{-1}(i)|=|A_i|+|W'_j|+|W'_k|+|B|$ by considering the total degree of the graph induced by the colour class $\gamma^{-1}(i)$. Solving for $|A_i|$, it follows that $|A_1|=|A_2|=|A_3|=\frac{1}{3}|A|=\frac{w}{4}$.

\noindent {\bf Case 2.} Suppose that $w\equiv 2\mod{4}$. Then $|E(G)| = \frac{3w+2}{4}$. The proof proceeds as in Case 1 with the following exceptions. We let $E=E(G) \setminus \{\a\b\}$, so that $|E|=\frac{3w-2}{4}$ and again $|E(H)|=\frac{3w}{2}$. At most one pair of vertices of degree 3 are adjacent in $H$, so Lemma~\ref{Lemma:eqFournier} can still be applied and again each colour class of $\gamma$ has size $\frac{w}{2}$. We may assume without loss of generality that $\gamma(\a\b)=3$. The reduced leave of the packing $\mathcal{P}$ will be the $3$-cycle $\{p_3,\a,\b\}$. Because each edge of $H$ except $\a\b$ is incident with exactly one vertex in $W'$, we find $|W'_1|=|W'_2|=\frac{w-2}{4}$ and $|W'_3|=\frac{w+2}{4}$. We deduce that $|A_1|=|A_2|=\frac{w-2}{4}$ and $|A_3|=\frac{w+2}{4}$.
\end{proof}

\begin{lemma}\label{Lemma:SmalltLeave}
Let $U'$ and $W$ be sets such that $|U'| \geq 2$ and $|W| \geq 6$ are even, let $(m,t)\in\{(0,0),(4,2),(5,2),(6,2),(6,4)\}$, let $a$, $b$, $c$ and $d$ be nonnegative integers, and let $\rho=2a+4c+t$. Suppose that
\begin{itemize}
    \item[(i)]
$d=0$ if $|U'| = 2$;
    \item[(ii)]
$\rho+4b+6d=|U'||W|$;
    \item[(iii)]
$\rho \in \{0\} \cup \{4,6,\ldots,2|W|\}$;
    \item[(iv)]
$t=2$ when $\rho=4$, and $(a,c) \notin \{(0,2),(1,1)\}$ when $\rho \in \{6,8\}$ and $t=0$; and
    \item[(v)]
if $t \in \{2,4\}$ and $\rho=2|W|-2i$ for some $i \in \{0,1,2\}$, then $m \leq c+t+i+1$.
\end{itemize}
Let $C$ be an $(a+c+m-t)$-cycle such that $V(C) \subseteq W$ (note that $a+c+m-t \in \{0\} \cup \{3,\ldots,|W|\}$). Then there exists a $(3^a,4^{b},5^c,6^{d},m)$-decomposition of $K_{U',W} \cup C$ that, if $m>0$, includes an $m$-cycle containing $m-t$ edges of $C$.
\end{lemma}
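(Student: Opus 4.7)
The plan is to adapt the proof strategy of Lemma~\ref{Lemma:BipartiteAndOneCycle}: construct a $(4^b,6^d)$-packing of $K_{U',W}$ whose reduced leave has a suitable path-decomposition structure, then apply Lemma~\ref{Lemma:PathsAndCycleToDecomp} to splice in the cycle $C$. The trivial case $\rho=0$ is disposed of at once: $\rho=2a+4c+t=0$ forces $a=c=t=0$, so $(m,t)=(0,0)$, the cycle $C$ is empty, and Theorem~\ref{Theorem:ChouFuHuang_no8} (using (i) for the $|U'|=2$ subcase) supplies the required $(4^b,6^d)$-decomposition of $K_{U',W}$.

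Assume now $\rho\geq 4$. Write $\rho=4e_4+6e_6$ with $e_6=0$ when $|U'|=2$; this is possible since $\rho$ is an even integer at least $4$, and when $|U'|=2$ we have $\rho=2|W|-4b\equiv 0\pmod{4}$ as $|W|$ is even. Applying Theorem~\ref{Theorem:ChouFuHuang_no8} to $K_{U',W}$ with parameters $(b+e_4,d+e_6)$ and then discarding $e_4$ of the $4$-cycles and $e_6$ of the $6$-cycles yields a $(4^b,6^d)$-packing $\P''$ whose reduced leave $L''$ has size $\rho\leq 2|W|$. Exactly as in Cases~1 and~2 of the proof of Lemma~\ref{Lemma:BipartiteAndOneCycle}, repeated applications of Lemma~\ref{Lemma:EqualiseOneStep} (justified because $|E(L'')|\leq 2|W|$) bring every $W$-vertex of $L''$ down to degree $2$, after which $(y_1,y_2)$-switches on $U'$-vertices chosen from distinct components merge those components. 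The outcome is a repacking with connected reduced leave $L'$ satisfying $|E(L')|=\rho$ and $\deg_{L'}(x)=2$ for every $x\in V(L')\cap W$.

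Because $L'$ is connected and even, it admits a closed Eulerian trail, and since $L'$ is bipartite with every $W$-vertex of degree $2$, this trail visits each vertex of $V(L')\cap W$ exactly once and any even-length subtrail starting at a $W$-vertex is a simple path with both ends in $W$ (the Eulerian property forbids a length-$4$ subarc from revisiting any $U'$-vertex). Starting from an arbitrary $x_0\in V(L')\cap W$, place breakpoints $x_1,\ldots,x_{a+c}\in W$ along the trail so that the consecutive arcs realise, in some order, $a$ paths of length $2$ and $c$ paths of length $4$, leaving a final arc of length $t$ from $x_{a+c}$ back to $x_0$ (with $x_0=x_{a+c}$ when $(m,t)=(0,0)$). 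The resulting paths $P_0,\ldots,P_{a+c}$ meet the hypotheses of Lemma~\ref{Lemma:PathsAndCycleToDecomp} with $M=(4^b,6^d)$: when $(m,t)=(0,0)$, condition (iv) together with $\rho\neq 4$ guarantees $a+c\in\{3,\ldots,|W|\}$; when $t\in\{2,4\}$, setting $\rho=2|W|-2i$ gives $a+c=|W|-i-\tfrac{t}{2}-c$, so condition (v)'s inequality $m\leq c+t+i+1$ rearranges precisely to $a+c\leq|W|-m+\tfrac{t}{2}+1$, while for $\rho<2|W|-4$ this upper bound holds automatically since $m\leq 6$. Invoking Lemma~\ref{Lemma:PathsAndCycleToDecomp} now produces the desired $(3^a,4^b,5^c,6^d,m)$-decomposition of $K_{U',W}\cup C$, with the $m$-cycle containing $m-t$ edges of $C$ when $m>0$.

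The principal technical obstacle is verifying that condition (v) translates exactly into the upper bound on $a+c$ demanded by Lemma~\ref{Lemma:PathsAndCycleToDecomp}: the three values $i\in\{0,1,2\}$ correspond precisely to the three boundary values of $\rho$ at which that bound actually bites, and (iv) supplies the complementary lower-bound restrictions on $a+c$ that rule out the configurations Lemma~\ref{Lemma:PathsAndCycleToDecomp} cannot handle. The remaining work, namely producing the connected reduced leave with all $W$-vertices of degree $2$, is essentially identical to the argument in Lemma~\ref{Lemma:BipartiteAndOneCycle} and presents no new difficulty in our restricted small-$m$ setting.
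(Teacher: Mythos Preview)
Your argument is correct, but you have reproved Lemma~\ref{Lemma:BipartiteAndOneCycle} rather than used it. The paper's proof is a one-liner: it observes that hypotheses (i)--(v) of Lemma~\ref{Lemma:SmalltLeave} translate directly into hypotheses (i)--(iv) of Lemma~\ref{Lemma:BipartiteAndOneCycle}, and then cites that lemma. In particular, the only nontrivial check is that (v) here yields the bound $a+c \leq |W|-m+\tfrac{t}{2}+1$ required by hypothesis (iv) of Lemma~\ref{Lemma:BipartiteAndOneCycle}, which you carry out correctly. Everything else you wrote---the $(4^b,6^d)$-packing via Theorem~\ref{Theorem:ChouFuHuang_no8}, the degree-equalisation via Lemma~\ref{Lemma:EqualiseOneStep}, the component-merging switches, the Eulerian-trail path decomposition, and the final appeal to Lemma~\ref{Lemma:PathsAndCycleToDecomp}---is a verbatim rerun of the proof of Lemma~\ref{Lemma:BipartiteAndOneCycle} itself, so it is correct but redundant once that lemma is available.
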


\begin{proof}
The result follows immediately from Lemma~\ref{Lemma:BipartiteAndOneCycle}. To see that hypothesis (iv) of Lemma~\ref{Lemma:BipartiteAndOneCycle} is satisfied when $t \in \{2,4\}$, it may help to note the following facts. If $t \in \{2,4\}$ and $\rho=2|W|-2i$ for some $i \in \{0,1,2\}$, then $c+t+i+1=|W|+\frac{t}{2}+1-a-c$ by the definition of $i$, and hence $m \leq c+t+i+1$ implies $|W|-m+\frac{t}{2}+1 \geq a+c$. If $t \in \{2,4\}$ and $\rho \leq 2|W|-6$, then $a+c \leq |W|-3-\frac{t}{2}$ because $\rho \leq 2|W|-6$, and hence $a+c \leq |W|-m+\frac{t}{2}+1$ because $(m,t)\in\{(4,2),(5,2),(6,2),(6,4)\}$.
\end{proof}

\begin{lemma}\label{Lemma:SmalltLeave2}
Let $U'$ and $W$ be sets with $|U'|,|W|$ even, $|U'| \geq 4$ and $|W| \geq 10$, let $(m,t)\in\{(0,0),(4,2),(5,2),(6,2),(6,4)\}$, and let $a$, $b$, $c$ and $d$ be nonnegative integers, and let $\rho=2a+4c+t$. Suppose that
\begin{itemize}
    \item[(i)]
$\rho+4b+6d=|U'||W|$;
    \item[(ii)]
$\rho \in \{2|W|-4,2|W|-2,\ldots,4|W|\}$, $t=2$ if $\rho \in \{2|W|-4,2|W|-2\}$, and $t\in\{2,4\}$ if $\rho=2|W|$; and
    \item[(iii)]
if $\rho\geq 4|W|-6$ and $t \in \{2,4\}$ then $c \geq 3$.
\end{itemize}
Then there are integers $\ell_1,\ell_{2} \in\{3,\ldots,w\}$ such that $\ell_1+\ell_{2}=a+c+m-t$ and, for any edge-disjoint cycles $C_1$ and $C_{2}$ in $K_{W}$ with lengths $\ell_1$ and $\ell_{2}$,
\begin{itemize}
    \item
if $|U'|\geq 6$ or $d$ is even, there exists a $(3^{a},4^{b},5^{c},6^{d},m)$-decomposition $\P$ of $K_{U',W}\cup C_1 \cup C_{2}$;
    \item
if $|U'| = 4$, $d$ is odd and $c \geq 1$, there exists a $(3^{a},4^{b},5^{c-1},6^{d-1},m)$-packing $\P$ of $K_{U',W}\cup C_1 \cup C_{2}$ whose reduced leave contains exactly one edge of $K_W$ and has a $(3,4,4)$-decomposition;
    \item
if $|U'| = 4$, $d$ is odd and $c = 0$, there exists a $(3^{a-1},4^{b},6^{d-1},m)$-packing $\P$ of $K_{U',W}\cup C_1 \cup C_{2}$ whose reduced leave contains exactly one edge of $K_W$, has a $(4,5)$-decomposition, and has a vertex of degree $4$ in $W$.
\end{itemize}
Furthermore, if $m>0$, then in each case there is an $m$-cycle in $\P$  that contains $m-t$ edges of $K_W$ (or $C_2$).
\end{lemma}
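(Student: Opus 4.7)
The strategy is to partition $U'=U'_1\cup U'_2$ with each $|U'_i|$ even and positive, and then to apply Lemma~\ref{Lemma:SmalltLeave} separately to $K_{U'_1,W}\cup C_1$ and to $K_{U'_2,W}\cup C_2$. On side $1$ this produces a $(3^{a_1},4^{b_1},5^{c_1},6^{d_1})$-decomposition, and on side $2$ a $(3^{a_2},4^{b_2},5^{c_2},6^{d_2},m)$-decomposition whose $m$-cycle uses $m-t$ edges of $C_2$. Their union is the claimed decomposition of $K_{U',W}\cup C_1\cup C_2$.

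The technical heart of the argument is the selection of parameters. Writing $\rho_1=2a_1+4c_1$, $\rho_2=2a_2+4c_2+t$, $\ell_1=a_1+c_1$ and $\ell_2=a_2+c_2+m-t$, I must choose nonnegative integers $a_i,b_i,c_i,d_i$ and even $|U'_i|\geq 2$ summing to $a,b,c,d,|U'|$ respectively so that $\ell_1+\ell_2=a+c+m-t$, each $\ell_i\in\{3,\ldots,w\}$, each $\rho_i\in\{0\}\cup\{4,6,\ldots,2w\}$, $\rho_i+4b_i+6d_i=|U'_i|w$, and the technical conditions (iv) and (v) of Lemma~\ref{Lemma:SmalltLeave} are satisfied. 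Hypothesis (ii) of Lemma~\ref{Lemma:SmalltLeave2} places $\rho\in[2w-4,4w]$, so a split with each $\rho_i\leq 2w$ is always feasible; when $\rho\geq 4w-6$, hypothesis (iii) supplies $c\geq 3$, giving enough $5$-cycles on side $2$ to satisfy condition (v) of Lemma~\ref{Lemma:SmalltLeave} in the regime where $\rho_2$ is near $2w$. The remaining corner cases are dispatched by a short case analysis on $\rho$, $t$, and the parities of $a$ and $c$, in the spirit of the allocation tables used in Lemmas~\ref{Lemma:BaseDecomp_FewCrossOdds} and \ref{Lemma:BaseDecomp_ManyCrossOdds_large_m}.

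The principal obstacle arises when $|U'|=4$: any even split of $U'$ forces $|U'_i|\leq 2$ on both sides, whence $d_i=0$ by condition (i) of Lemma~\ref{Lemma:SmalltLeave}. I would circumvent this by first running the split argument with $d$ replaced by $0$ and $b$ replaced by $b+3\lfloor d/2\rfloor$, producing a packing of $K_{U',W}\cup C_1\cup C_2$ with no $6$-cycles but $3\lfloor d/2\rfloor$ extra $4$-cycles. The four vertices of $U'$ are pairwise twin and pairwise nonadjacent in the ambient graph, and each $4$-cycle in the packing lies in $K_{U',W}$ and meets $U'$ in exactly two vertices, so Lemma~\ref{Lemma:4s_to_6s} applies and converts the extra $4$-cycles into $d$ (when $d$ is even) or $d-1$ (when $d$ is odd) $6$-cycles. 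When $d$ is even this finishes the construction. When $d$ is odd and $c\geq 1$, I aim instead for a packing that uses one fewer $5$-cycle; the $11$ residual edges (the foregone $5$-cycle together with the unformed $6$-cycle's worth) are arranged, via a careful choice of the omitted $5$-cycle, to form the required $(3,4,4)$-leave with exactly one pure edge. When $d$ is odd and $c=0$, I omit a $3$-cycle instead and arrange for it to share a $W$-vertex with the unformed $6$-cycle, yielding a $(4,5)$-leave with a vertex of degree $4$ in $W$.

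Finally, the property that the constructed $m$-cycle contains $m-t$ edges of $C_2$ is immediate from the corresponding conclusion of Lemma~\ref{Lemma:SmalltLeave} applied on side $2$.
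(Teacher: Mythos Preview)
Your overall architecture matches the paper's: split $U'=U'_1\cup U'_2$, apply Lemma~\ref{Lemma:SmalltLeave} to each piece, and (when $|U'|=4$) use Lemma~\ref{Lemma:4s_to_6s} on the four pairwise-twin vertices of $U'$ to manufacture $6$-cycles afterwards. The paper also fixes $|U'_1|=2$ throughout and tabulates seven explicit parameter choices, but that is the ``short case analysis'' you allude to.

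There is, however, a genuine gap in your treatment of $|U'|=4$ with $d$ odd. You propose to run the split with $d$ replaced by $0$ and $b$ replaced by $b+3\lfloor d/2\rfloor$. But Lemma~\ref{Lemma:SmalltLeave} requires the exact edge identity $\rho_i+4b_i+6d_i=|U'_i|\,|W|$ on each side; summing over $i$ with $d_i=0$ forces $\rho+4(b+3\lfloor d/2\rfloor)=4|W|$, whereas hypothesis~(i) gives $\rho+4b+6d=4|W|$. When $d$ is odd these differ by $6$, so Lemma~\ref{Lemma:SmalltLeave} cannot be invoked with your parameters at all---it is not that you obtain a packing with a size-$6$ leave, it is that the hypotheses fail outright. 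The paper repairs this by simultaneously shifting a $5$-cycle to a $3$-cycle (when $c\geq 1$) or a $3$-cycle to a $5$-cycle (when $c=0$) and adding two further $4$-cycles, so that the modified parameters $(a\pm 1,\,b+\tfrac{3(d-1)}{2}+2,\,c\mp 1,\,0)$ do satisfy the edge count. After applying Lemma~\ref{Lemma:4s_to_6s} one then has a genuine $(3^{a\pm1},4^{b+2},5^{c\mp1},6^{d-1},m)$-decomposition, from which removing a $3$-cycle and two $4$-cycles (respectively a $4$-cycle and a $5$-cycle sharing a $W$-vertex) yields the stated $(3,4,4)$- or $(4,5)$-leave directly. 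Your description of ``arranging residual edges'' into such a leave is not how it is done and would not obviously work: the leave of an omitted $5$-cycle plus an unformed $6$-cycle has a $(5,6)$-decomposition, not a $(3,4,4)$-decomposition, absent further structure.
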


\begin{proof}
Let $w=|W|$. Let $U'_1 \subseteq U'$ with $|U'_1|=2$ and let $U'_2= U' \setminus U'_1$. Let $\delta=1$ if $d$ is odd and $\delta=0$ if $d$ is even. Note that, by (i),
\begin{equation}\label{Eq:xCongSmalltLeave2}
\rho+2\delta \equiv 0 \mod{4}.
\end{equation}
We will select values for $a_1$, $b_1$, $c_1$, $a_2$, $b_2$, $c_2$ and $d_2$ such that $a_1+a_2+c_1+c_2=a+c$ according to the following table (the criteria for the cases are given below).

\begin{footnotesize}
\begin{center}
\begin{tabular}{|c||c|c|c|c|c|c|c|c|}
  case & $a_1$ & $b_1$ & $c_1$ & $a_2$ & $b_2$ & $c_2$ & $d_2$ \\[0.15cm] \hline
  1 & $\min(\efloor{a},w)$ & $0$ & $\frac{w-a_1}{2}$ & $a-a_1$ & $b$ & $c-c_1$ & $d$ \\[0.15cm]
  2 & $\min(\efloor{a},w-4)$ & $2$ & $\frac{w-a_1}{2}-2$ & $a-a_1$ & $b-2$ & $c-c_1$ & $d$ \\[0.15cm]
  3 & $\min(\efloor{a},w-4)$ & $2$ & $\frac{w-a_1}{2}-2$ & $a-a_1$ & $b+1$ & $c-c_1$ & $d-2$ \\[0.15cm]
  4 & $\min(\efloor{a+\delta},w)$ & $0$ & $\frac{w-a_1}{2}$ & $a+\delta-a_1$ & $b+\frac{3(d-\delta)}{2}+2\delta$ & $c-\delta-c_1$ & $0$ \\[0.15cm]
  5 & $\max(0,w-4-2c+2\delta)$ & $2$ & $\frac{w-a_1}{2}-2$ & $a+\delta-a_1$ & $b+\frac{3(d-\delta)}{2}+2\delta-2$ & $c-\delta-c_1$ & $0$ \\[0.15cm]
  6 & $w-2\delta$ & $\delta$ & $0$ & $a-\delta-a_1$ & $b+\frac{3(d-\delta)}{2}$ & $\delta$ & $0$ \\[0.15cm]
  7 & $w-4$ & $2$ & $0$ & $a-\delta-a_1$ & $b+\frac{3(d-\delta)}{2}+\delta-2$ & $\delta$ & $0$ \\[0.15cm]
\end{tabular}
\end{center}
\end{footnotesize}

We will apply Lemma~\ref{Lemma:SmalltLeave} to show that $a_1+c_1$ and $a_2+c_2+m-t$ are in $\{3,\ldots,w\}$ and that, for any edge-disjoint cycles $C_1$ and $C_{2}$ in $K_{W}$ with lengths $a_1+c_1$ and $a_2+c_2+m-t$, there is a $(3^{a_1},4^{b_1},5^{c_1})$-packing $\P_1$ of $K_{U'_1,W}\cup C_1$ and a $(3^{a_2},4^{b_2},5^{c_2},6^{d_2},m)$-packing $\P_2$ of $K_{U'_2,W} \cup C_{2}$ from which we can obtain a packing with the required properties. In each case the fact that the hypotheses of Lemma~\ref{Lemma:SmalltLeave} are satisfied when constructing $\P_1$ and $\P_2$ can be deduced from \eqref{Eq:xCongSmalltLeave2}, the hypotheses of this lemma and the criteria of the relevant case. In particular, we use $w \geq 10$ frequently. For brevity, let $\rho_1=2a_1+4c_1$ and $\rho_2=2a_2+4c_2+t$. For each case, we now detail the criteria for the case, explain how a packing with the required properties can be obtained from $\P_1 \cup \P_2$, and justify some of the less obvious deductions required to see that the hypotheses of Lemma~\ref{Lemma:SmalltLeave} are satisfied. To show that hypotheses (iii), (iv) and (v) of Lemma~\ref{Lemma:SmalltLeave} are satisfied in constructing $\P_2$, it suffices to show that
\begin{equation}\label{Eq:hyp4SmalltLeave2}
\mbox{$\rho_2 \geq 4$, $t=2$ when $\rho_2=4$, and $(a_2,c_2) \notin \{(0,2),(1,1)\}$ when $\rho_2 \in \{6,8\}$ and $t=0$},
\end{equation}
and
\begin{equation}\label{Eq:hyp5SmalltLeave2}
\mbox{$\rho_2 \leq 2w$, and $m \leq c_2+t+i+1$ when $t \in \{2,4\}$ and $i \in \{0,1,2\}$},
\end{equation}
where $i$ is the integer such that $\rho_2=2w-2i$.

\noindent {\bf Case 1: $\bm{|U'| \geq 6}$, $\bm{\rho \geq 2w+8}$, and $\bm{t \in \{2,4\}}$ if $\bm{\rho = 2w+8}$.} In this case $\P_1 \cup \P_2$ is itself a packing with the required properties. Note that $\rho_1=2w$ and $\rho_2=\rho-2w$.
We have $c_2 \geq 0$ because either $c_1=0$ or $a_2 \in \{0,1\}$ and $2a_2+4c_2 = \rho_2-t \geq 4$. We have \eqref{Eq:hyp4SmalltLeave2} by the criteria for this case. We have \eqref{Eq:hyp5SmalltLeave2} by (iii) when $c_1=0$ and because
\[c_2+t+i+1 = \tfrac{1}{4}(\rho_2-2a_2-t)+t+i+1 \geq \tfrac{2w+2i+3t+4-2a_2}{4} \geq 7 \quad \mbox{for $t \in \{2,4\}$}\]
when $a_2 \in \{0,1\}$.

\noindent {\bf Case 2: $\bm{|U'| \geq 6}$, $\bm{\rho \leq 2w+8}$, $\bm{t=0}$ if $\bm{\rho = 2w+8}$, and $\bm{b \geq 2}$.} In this case $\P_1 \cup \P_2$ is itself a packing with the required properties. Note that $\rho_1=2w-8$ and $\rho_2=\rho-2w+8$. We have $c_2 \geq 0$ because either $c_1=0$ or $a_2 \in \{0,1\}$ and $2a_2+4c_2 = \rho_2-t \geq 4$ by (ii). We have \eqref{Eq:hyp4SmalltLeave2} by (ii). We have \eqref{Eq:hyp5SmalltLeave2} by the criteria for this case.

\noindent {\bf Case 3: $\bm{|U'| \geq 6}$, $\bm{\rho \leq 2w+8}$, $\bm{t=0}$ if $\bm{\rho = 2w+8}$, and $\bm{b \in \{0,1\}}$.} Lemma~\ref{Lemma:4s_to_6s} can be applied to $\P_1 \cup \P_2$ (using the two $4$-cycles in $\P_1$ and any one $4$-cycle in $\P_2$) to obtain a packing with the required properties. We have $d_2 \geq 0$ because $4b+6d = |U'|w-\rho \geq 4w-8$ by (i) and the criteria for this case. We have $c_2 \geq 0$, \eqref{Eq:hyp4SmalltLeave2} and \eqref{Eq:hyp5SmalltLeave2} by similar arguments to those in Case 2.

\noindent {\bf Case 4: $\bm{|U'| = 4}$, $\bm{c \geq 1}$, $\bm{\rho \geq 2w+8}$, and $\bm{t \in \{2,4\}}$ if $\bm{\rho \in \{2w+8,2w+10\}}$.} Lemma~\ref{Lemma:4s_to_6s} can be applied to $\P_1 \cup \P_2$ (using any $\frac{3}{2}(d-\delta)$ $4$-cycles in $\P_1 \cup \P_2$) to obtain a $(3^{a+\delta},4^{b+2\delta},5^{c-\delta},6^{d-\delta},m)$-decomposition of $K_{U',W}\cup C_1 \cup C_{2}$. If $\delta=0$ this completes the proof and if $\delta=1$ we can remove two $4$-cycles and a $3$-cycle to obtain a packing with the required properties. Note that $\rho_1=2w$ and $\rho_2=\rho-2w-2\delta$. We have $c_2 \geq 0$ because either $c_1=0$ or $a_2 \in \{0,1\}$ and $2a_2+4c_2 = \rho_2-t \geq 4$ (note that if $\delta=1$, then $\rho \geq 2w+10$ by \eqref{Eq:xCongSmalltLeave2}). We have \eqref{Eq:hyp4SmalltLeave2} by the criteria for this case. We have \eqref{Eq:hyp5SmalltLeave2} by similar arguments to those in Case~1.

\noindent {\bf Case 5: $\bm{|U'| = 4}$, $\bm{c \geq 1}$, $\bm{\rho \leq 2w+10}$, and $\bm{t=0}$ if $\bm{\rho \in \{2w+8,2w+10\}}$.} Lemma~\ref{Lemma:4s_to_6s} can be applied to $\P_1 \cup \P_2$ (using any $\frac{3}{2}(d-\delta)$ $4$-cycles in $\P_1 \cup \P_2$) to obtain a $(3^{a+\delta},4^{b+2\delta},5^{c-\delta},6^{d-\delta},m)$-decomposition of $K_{U',W}\cup C_1 \cup C_{2}$. If $\delta=0$ this completes the proof and if $\delta=1$ we can remove two $4$-cycles and a $3$-cycle to obtain a packing with the required properties. Note that $\rho_1=2w-8$ and $\rho_2=\rho-2w+8-2\delta$. We have $a_2 \geq 0$ because $2a+4c = \rho-t \geq 2w-6$ by (ii) and hence $a+2c \geq w-3$. Further, $a_2 \geq \delta$ when $\rho \geq 2w-2$. We have $b_2 \geq 0$ because $4b+6d = 4w-\rho \geq 2w-10$ by (i) and the criteria for this case and hence $b \geq \frac{w-3d-5}{2}$. We have $c_2 \geq 0$ because either $c \geq \frac{w}{2}-2+\delta$ and $c_1=\frac{w}{2}-2$ or $c_1=c-\delta$. We have \eqref{Eq:hyp4SmalltLeave2} by (ii) (note that if $\delta=1$, then $\rho \geq 2w-2$ by \eqref{Eq:xCongSmalltLeave2} and that $a_2 \geq \delta$ when $\rho \geq 2w-2$). We have \eqref{Eq:hyp5SmalltLeave2} by the criteria for this case.

\noindent {\bf Case 6: $\bm{|U'| = 4}$, $\bm{c = 0}$, and $\bm{\rho \geq 2w+8}$.} If $\delta=1$, then we can ensure that the $4$-cycle in $\P_1$ and the $5$-cycle in $\P_2$ with one edge of $K_W$ share at least one vertex in $W$. We will justify this below. Lemma~\ref{Lemma:4s_to_6s} can be applied to $\P_1 \cup \P_2$ (using $\frac{3}{2}(d-\delta)$ $4$-cycles in $\P_2$) to obtain a $(3^{a-\delta},4^{b+\delta},5^{\delta},6^{d-\delta},m)$-decomposition of $K_{U',W}\cup C_1 \cup C_{2}$ in which, if $\delta=1$, a $4$-cycle with no edges of $K_W$ and a $5$-cycle with one edge of $K_W$  share a vertex in $W$. If $\delta=0$ this completes the proof and if $\delta=1$ we can remove the $4$-cycle and $5$-cycle from the decomposition to obtain a packing with the required properties. Note that $\rho_1=2w-4\delta$ and $\rho_2=\rho-2w+6\delta$. We have $a_2 \geq 0$ because $2a = \rho-t \geq 2w+4$ by the criteria for this case and hence $a \geq w+2$. We have \eqref{Eq:hyp4SmalltLeave2} by the criteria for this case. We have \eqref{Eq:hyp5SmalltLeave2} because $\rho \leq 4w-6\delta$ by (i) and because $\rho \leq 4w-8$ when $t \in \{2,4\}$ by (iii) (note that if $\delta=1$, then $\rho \leq 4w-10$ by \eqref{Eq:xCongSmalltLeave2}).

It remains to show that, if $\delta=1$, then we can ensure that the $4$-cycle $X$ in $\P_1$ and the $5$-cycle $Y$ in $\P_2$ with one edge of $K_W$ share at least one vertex in $W$. Note that $V(X) \cap W=W \setminus V(C_1)$. When $V(C_2) \nsubseteq V(C_1)$, we can permute the vertices of $\P_2$ so that the edge of $Y$ in $K_W$ is incident with a vertex in $V(C_2) \setminus V(C_1)$ and hence in $V(X)$. When $V(C_2) \subseteq V(C_1)$, noting that $|V(C_2)| \leq w-2$ and $t \leq 4$, we can ensure that $Y$ has a vertex in $W \setminus V(C_2)$. (This can be seen by directly applying Lemma \ref{Lemma:PathsAndCycleToDecomp} to construct $\P_2$. The hypotheses of Lemma \ref{Lemma:PathsAndCycleToDecomp} are satisfied since the reduced leave of a $(4^{b_2})$-packing of $K_{U_2',W}$ is a copy of $K_{2,w-2b_2}$, which clearly has the required path decomposition.) We can then permute the vertices of $\P_2$ so that this vertex of $Y$ is in $W \setminus V(C_1)$ and hence in $V(X)$.

\noindent {\bf Case 7: $\bm{|U'| = 4}$, $\bm{c = 0}$, $\bm{\rho \leq 2w+6}$.}
A packing with the required properties can be obtained from $\P_1 \cup \P_2$ as in Case 6. Note that $\rho_1=2w-8$ and $\rho_2=\rho-2w+8+2\delta$. We have $b_2 \geq 0$ because $4b+6d=4w-\rho \geq 2w-6$ by (i) and the criteria for this case and hence $b \geq \frac{w-3d-3}{2}$. We have $a_2 \geq 0$ by similar arguments to those in Case~5.  We have \eqref{Eq:hyp4SmalltLeave2} by (ii). We have $\rho_2 \leq 16$ by the criteria for this case, so \eqref{Eq:hyp5SmalltLeave2} holds.
\end{proof}

\begin{lemma}\label{Lemma:BaseDecomp_ManyCrossOdds_small_m}
Let $u \geq 5$ and $w \geq 10$ be integers such that $u$ is odd and $w$ is even. Let $N$ be a list of integers and let $a$, $b$, $c$ and $d$ be nonnegative integers such that the following conditions hold.
\begin{itemize}
	\item[(i)]
$(\sum N)-t +a+c =\binom{w}{2}$, where $t\in\{0,2,4\}$;
	\item[(ii)]
$2a+4b+4c+6d+t=uw$;
	\item[(iii)]
$3\leq \ell\leq \min (u, w)$ for each entry $\ell$ in $N$, and $d=0$ if $u=5$;
	\item[(iv)]
either $a\geq \frac{w}{2}$ and $a+c\geq \frac{w}{2}+3$, or $c\geq \frac{3w}{4}$ and $a+c\geq \frac{3w}{4}+4$;
	\item[(v)]
if $b+d\leq 2$ and $t\in\{2,4\}$, then $a \in \{0,1,2,3,4,\frac{w}{2},\frac{w}{2}+1,\frac{w}{2}+2,\frac{w}{2}+3\}$;
	\item[(vi)]
if $t\in\{2,4\}$, there is some entry $m$ in $N$ such that $(m,t) \in \{(4,2),(5,2),(6,2),(6,4)\}$.
\end{itemize}
Then there exists an $(N,3^a,4^b,5^c,6^d)$-decomposition of $K_{u+w}-K_u$ that includes cycles with lengths $(3^a,4^{b},5^c,6^{d})$ that each contain at most one pure edge.
\end{lemma}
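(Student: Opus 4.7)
The plan is to follow the template of the proofs of Lemmas~\ref{Lemma:BaseDecomp_FewCrossOdds} and~\ref{Lemma:BaseDecomp_ManyCrossOdds_large_m}. Write $K_{u+w}-K_u=K_{U,W}\cup K_W$ with $|U|=u$ and $|W|=w$. Hypothesis (vi) guarantees that we may take $m\in\{0,4,5,6\}$ with $(m,t)\in\{(0,0),(4,2),(5,2),(6,2),(6,4)\}$, precisely the pairs handled by Lemmas~\ref{Lemma:SmalltLeave} and~\ref{Lemma:SmalltLeave2}. The overall strategy will be: build a small packing $\P_1$ concentrated in $K_{U_1,W}$ for a small $U_1\subseteq U$ that absorbs a $1$-factor-worth of edges of $K_W$ using either $3$-cycles (Case A) or $5$-cycles (Case B); then carve the cycles of $N\setminus(m)$ out of $K_W$ minus the absorbed edges via Theorem~\ref{Theorem:Alspach}, leaving a one- or two-cycle residual; and finish by applying Lemma~\ref{Lemma:SmalltLeave} or~\ref{Lemma:SmalltLeave2} to $K_{U\setminus U_1,W}$ together with that residual.

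In Case A, hypothesis (iv) gives $a\geq\tfrac{w}{2}$ and $a+c\geq\tfrac{w}{2}+3$. Take $|U_1|=1$ and a $1$-factor $I$ of $K_W$, and let $\P_1$ be a $(3^{w/2})$-decomposition of $K_{U_1,W}\cup I$. This uses $\tfrac{w}{2}$ of the $3$-cycles, leaving $(3^{a-w/2},4^b,5^c,6^d,m)$ to be built in $K_{U\setminus U_1,W}\cup(K_W-I)$. Theorem~\ref{Theorem:Alspach} then provides a decomposition of $K_W-I$ into the lengths in $N\setminus(m)$ together with one or two padding cycles whose lengths sum to $(a-\tfrac{w}{2})+c+m-t$; setting $\P_0$ to be the first collection, the padding cycles constitute the reduced leave of $\P_0$. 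Lemma~\ref{Lemma:SmalltLeave} (in the one-cycle case) or Lemma~\ref{Lemma:SmalltLeave2} (in the two-cycle case) then decomposes $K_{U\setminus U_1,W}$ plus this leave into the remaining cycles, with $m$ absorbing $m-t$ edges of $K_W-I$.

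Case B has $c\geq\tfrac{3w}{4}$ and $a+c\geq\tfrac{3w}{4}+4$, with $a$ possibly less than $\tfrac{w}{2}$. Here I take $|U_1|=3$ and a subgraph $G$ of $K_W$ with $\lceil 3w/4\rceil$ edges and each vertex of degree $1$ or $3$, then apply Lemma~\ref{Lemma:1factor5cycles} to obtain a $(5^{\lfloor 3w/4\rfloor})$-packing $\P_1$ of $K_{U_1,W}\cup G$. When $w\equiv 2\pmod{4}$ the leave of $\P_1$ is a single $3$-cycle containing the one pure edge $\alpha\beta\in G$; I count this triangle as one of the $a$ required $3$-cycles (and note that it has exactly one pure edge, as required). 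The remainder proceeds as in Case A: decompose $K_W-G$ by Theorem~\ref{Theorem:Alspach} with a one- or two-cycle padding, then invoke Lemma~\ref{Lemma:SmalltLeave} or~\ref{Lemma:SmalltLeave2} on $K_{U\setminus U_1,W}$ together with the residual.

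The main obstacle will be verifying that the residual parameter $\rho'=2a'+4c'+t$ (with $a',c'$ the counts left after $\P_1$) falls in the admissible range of whichever bipartite lemma is invoked, and that the padding cycle lengths required by Theorem~\ref{Theorem:Alspach} lie in $\{3,\ldots,w\}$. Hypothesis~(v), which constrains $a$ when $b+d\leq 2$, appears tailored precisely to handle the boundary between Lemmas~\ref{Lemma:SmalltLeave} and~\ref{Lemma:SmalltLeave2} and to secure the condition $c\geq 3$ at the top of the range of Lemma~\ref{Lemma:SmalltLeave2} (its hypothesis (iii)); the extreme case $|U\setminus U_1|=4$, namely $u=5$ in Case A or $u=7$ in Case B, will demand the most care. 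Once these range checks are assembled, the construction reduces to a bookkeeping exercise in the spirit of the proofs of Lemmas~\ref{Lemma:BaseDecomp_FewCrossOdds} and~\ref{Lemma:BaseDecomp_ManyCrossOdds_large_m}.
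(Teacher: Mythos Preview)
Your overall template is right and matches the paper's approach, but there is one structural omission that breaks the argument for large $u$, and a boundary case in Case~B that you have not handled.

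The main gap: Lemmas~\ref{Lemma:SmalltLeave} and~\ref{Lemma:SmalltLeave2} together only cover $\rho'\leq 4|W|$. After your $\P_1$ step you have $|U'|=|U\setminus U_1|\in\{u-1,u-3\}$ and residual $\rho'=|U'|w-4b-6d$. When $u$ is large and $b+d$ is small---which is entirely consistent with the hypotheses (for instance $a+c$ can be of order $uw/4$)---$\rho'$ can vastly exceed $4w$, and neither bipartite lemma applies. The paper fixes this by inserting an intermediate layer: it chooses $U_2\subseteq U\setminus U_1$ with $|U_2|=2n$ and applies Lemma~\ref{Lemma:3s5s} to decompose $K_{U_2,W}$ together with $n$ further padding cycles from $K_W-I$ (each of length in $\{\tfrac{w}{2},\ldots,w\}$) into $3$- and $5$-cycles. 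This $\P_2$ absorbs enough of $a$ and $c$ that the final residual $\rho_3$ on $U_3=U\setminus(U_1\cup U_2)$ lands in $\{10,\ldots,2w+8\}$ (or below), putting it in range for Lemma~\ref{Lemma:SmalltLeave} or~\ref{Lemma:SmalltLeave2}. Your proposal is missing this $\P_2$ layer entirely.

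A second gap concerns Case~B when $w\equiv 2\pmod{4}$ and $a=0$. You propose to count the leftover $3$-cycle from Lemma~\ref{Lemma:1factor5cycles} as ``one of the $a$ required $3$-cycles'', but there are no such cycles when $a=0$. The paper handles this---and also the cases where $|U_3|=4$ and $d_3$ is odd, in which Lemma~\ref{Lemma:SmalltLeave2} returns only a packing with a leave having a $(3,4,4)$- or $(4,5)$-decomposition---by leaving a small residual and then invoking Lemma~\ref{Lemma:1Deg4Vertex_rearrange} at the very end to repack that residual (together with the stray $3$-cycle, when present) into a $(5,5)$-, $(5,6)$-, or $(3,6)$-pair. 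You will need this extra mechanism to close the argument.
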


\begin{proof}
Let $U$ and $W$ be disjoint sets of sizes $u$ and $w$ and observe that $K_{U\cup W}-K_U=K_{U,W} \cup K_{W}$. Let $m=0$ if $t=0$. We first choose disjoint subsets $U_1$ and $U_2$ of $U$ and nonnegative integers $a_1$, $a_2$, $a_3$, $c_1$, $c_2$ and $c_3$. Let $a_1$, $c_1$ and $|U_1|$ be given as follows.
\begin{center}
\begin{tabular}{|l|c|c|c|}
  \hline
  case & $a_1$ & $c_1$ & $|U_1|$ \\ \hline
  $a\geq \frac{w}{2}$ & $\frac{w}{2}$ & $0$ & $1$ \\
  $a<\frac{w}{2}$ and $w\equiv 0\mod{4}$ & $0$ & $\frac{3w}{4}$ & $3$ \\
  $1 \leq a < \frac{w}{2}$ and $w\equiv 2\mod{4}$ & $1$ & $\frac{3w-2}{4}$ & $3$ \\
  $a=0$ and $w\equiv 2\mod{4}$ & $0$ & $\frac{3w-2}{4}$ & $3$ \\
  \hline
\end{tabular}
\end{center}
Using (iv), we see that $a_1 \leq a$ and $c_1 \leq c$. Further, let $a'$, $c'$, $b_3$ and $d_3$ be given as follows.
\begin{center}
\begin{tabular}{|l|c|c|c|c|}
  \hline
  case & $a'$ & $c'$ & $b_3$ & $d_3$ \\ \hline
  $a=0$, $w\equiv 2\mod{4}$, $d$ is even & $1$ & $c-c_1-2$ & $b+1$ & $d$ \\
  $a=0$, $w\equiv 2\mod{4}$, $d$ is odd & $0$ & $c-c_1-1$ & $b+2$ & $d-1$ \\
  $a \geq 1$ or $w\equiv 0\mod{4}$ & $a-a_1$ & $c-c_1$ & $b$ & $d$ \\
  \hline
\end{tabular}
\end{center}
Let $\rho=2a'+4c'+t$. Using (iv) we see that $a'$, $c'$, $b_3$ and $d_3$ are nonnegative. Let $|U_2|$ and $\rho_3$ be the nonnegative even integers that satisfy the conditions given below.
\begin{center}
\begin{tabular}{|l|l|}
  \hline
  case & conditions \\ \hline
  $(u,|U_1|)=(5,3)$ or $\rho \leq 8$ & $\rho=|U_2|w+\rho_3$, $|U_2|=0$ \\
  $10 \leq \rho \leq (u-|U_1|-4)w+8$ & $\rho=|U_2|w+\rho_3$, $\rho_3 \in \{10,12,\ldots,2w+8\}$ \\
  $\rho \geq (u-|U_1|-4)w+10$, $(u,|U_1|)\neq(5,3)$ & $\rho=|U_2|w+\rho_3$, $|U_2|= u-|U_1|-4$ \\
  \hline
\end{tabular}

\end{center}
Note that $|U_2| \in \{0,2,\ldots,u-|U_1|-4\}$ unless $(u,|U_1|)=(5,3)$ and $|U_2|=0$. Now let
\[a_2=\min(\efloor{a'},\tfrac{1}{2}|U_2|w), \quad c_2=\tfrac{1}{4}(|U_2|w-2a_2), \quad a_3=a'-a_2, \quad c_3=c'-c_2.\]
By our definitions, $2a_2+4c_2=|U_2|w$ and $2a_3+4c_3+t=\rho_3$. Clearly, $a_2$, $c_2$ and $a_3$  are nonnegative. When $\rho_3 < 10$ we have $|U_2|=0$ and $(a_3,c_3)=(a',c')$. Thus it follows from (iv) and the definitions of $a'$ and $c'$ that either $\rho_3 \geq 10$ or $a_3+c_3 \geq 3$. Furthermore, $c_3$ is nonnegative because $c_2=0$ when $a_2= \tfrac{1}{2}|U_2|w$, and $a_3\in \{0,1\}$ and $2a_3+4c_3+t=\rho_3 \geq 6$ when $a_2=\efloor{a'}$. It may be that $a_1+a_2+a_3 \neq a$ or $c_1+c_2+c_3 \neq c$. However, $\mathcal{P}_3$ (defined below) will be produced by applying Lemma~\ref{Lemma:SmalltLeave} or \ref{Lemma:SmalltLeave2} with $(a,b,c,d)=(a_3,b_3,c_3,d_3)$ and then possibly removing cycles. Observe that $\rho_3$, $w$ and $t$ satisfy one of the following
\begin{equation}\label{Eq:xBoundsSmalltLeave}
\mbox{ $\rho_3 \leq 2w$, $t \in \{0,4\}$ if $\rho_3 \in \{2w-4,2w-2\}$, and $t=0$ if $\rho_3=2w$.}
\end{equation}
\begin{equation}\label{Eq:xBoundsSmalltLeave2}
\mbox{$\rho_3 \geq 2w-4$, $t=2$ if $\rho_3 \in \{2w-4,2w-2\}$, and $t\in\{2,4\}$ if $\rho_3=2w$.}
\end{equation}

We now construct packings $\P_0,\ldots,\P_3$ as follows (we justify that these packings exist later).
\begin{itemize}
	\item
$\P_0$ is an $(N\setminus(m))$-packing of $K_{W}-I$, where $I$ is a $1$-factor on vertex set $W$. The reduced leave of $\P_0$ is the edge-disjoint union of cycles $C^\star,C_1,\ldots,C_{n},C_1^\dag,C_2^\dag$, where
\begin{itemize}
    \item
$C^\star$ is trivial if $a \geq \frac{w}{2}$ and $|E(C^\star)|=\lceil\frac{w}{4}\rceil$ otherwise; and
    \item
$n=\frac{|U_2|}{2}$, $|E(C_i)| \in\{\frac{w}{2},\ldots, w\}$ for $1\leq i\leq n$, and $\sum_{i=1}^n|E(C_i)|=a_2+c_2$; and
    \item
$|E(C^\dag_1)|+|E(C^\dag_2)|=m-t+a_3+c_3$.
\end{itemize}
The cycle lengths $|E(C_1)|,\ldots,|E(C_{n})|$ will be given by Lemma~\ref{Lemma:3s5s} (note that $a_2+2c_2=nw$). The cycle lengths $|E(C^\dag_1)|$ and $|E(C^\dag_{2})|$ will be given by Lemma~\ref{Lemma:SmalltLeave} or \ref{Lemma:SmalltLeave2}.
    \item
$\P_1$ is a $(3^{a_1},5^{c_1})$-packing of $K_{U_1,W}\cup I \cup C^\star$. The reduced leave $L_1$ of $\P_1$ is a $3$-cycle if $a=0$ and $w \equiv 2 \mod{4}$ and is trivial otherwise.
    \item
$\P_2$ is a $(3^{a_2}, 5^{c_2})$-decomposition of $K_{U_2,W}\cup C_1\cup \cdots\cup C_n$.
    \item
$\P_3$ is a packing of $K_{U_3, W} \cup C_1^\dag \cup C_2^\dag$ that, if $m>0$, includes an $m$-cycle containing $m-t$ edges of $K_W$, where $U_3=U \setminus (U_1 \cup U_2)$, with a reduced leave $L_3$. The properties of $\P_3$ and $L_3$ divide according to the following cases. The cases are mutually exclusive because $d_3$ is defined so as to be even when $a=0$ and $w\equiv 2\mod{4}$.
\begin{description}
    \item[Case 1: $\bm{a=0}$, $\bm{w \equiv 2 \mod{4}}$, and $\bm{d}$ is even.]\hfill\\
 Then $\P_3$ is a $(3^{a_3-1},4^{b_3-1},5^{c_3},6^{d_3},m)$-packing, $L_3$ has exactly one pure edge, $L_3$ has a $(3,4)$-decomposition, and $L_1 \cup L_3$ has a vertex of degree $4$;
    \item[Case 2: $\bm{a=0}$, $\bm{w \equiv 2 \mod{4}}$, and $\bm{d}$ is odd.]\hfill\\
Then $\P_3$ is a $(3^{a_3},4^{b_3-2},5^{c_3},6^{d_3},m)$-packing, $L_3$ has no pure edges, $L_3$ has a $(4,4)$-decomposition, and $L_1 \cup L_3$ has a vertex of degree $4$;
    \item[Case 3: \eqref{Eq:xBoundsSmalltLeave2} holds, $\bm{|U_3| = 4}$, $\bm{d_3}$ is odd, and $\bm{c_3 \geq 1}$.]
    \hfill\\
Then $\P_3$ is a $(3^{a_3},4^{b_3},5^{c_3-1},6^{d_3-1},m)$-packing, $L_3$ has exactly one pure edge, and $L_3$ has a $(3,4,4)$-decomposition;
    \item[Case 4: \eqref{Eq:xBoundsSmalltLeave2} holds, $\bm{|U_3| = 4}$, $\bm{d_3}$ is odd, and $\bm{c_3 = 0}$.]\hfill\\
Then $\P_3$ is a $(3^{a_3-1},4^{b_3},5^{c_3},6^{d_3-1},m)$-packing, $L_3$ has exactly one pure edge, $L_3$ has a $(4,5)$-decomposition, and there is a vertex in $W$ with degree $4$ in $L_3$;
    \item[Case 5: otherwise.]\hfill\\
Then $\P_3$ is a $(3^{a_3},4^{b_3},5^{c_3},6^{d_3},m)$-decomposition and $L_3$ is trivial.
\end{description}
\end{itemize}

Let $\mathcal{P}'=\P_0\cup \P_1 \cup \P_2 \cup \P_3$. Then $\P'$ is a packing of $K_{U\cup W}-K_U$ with reduced leave $L_1 \cup L_3$. If we are in Case 5 then $\P'$ is an $(N,3^a,4^b,5^c,6^d)$-decomposition with the required properties. Otherwise we can obtain an $(N,3^a,4^b,5^c,6^d)$-decomposition of $K_{U\cup W}-K_U$ with the required properties by applying Lemma~\ref{Lemma:1Deg4Vertex_rearrange} with $m$ and $m'$ as per the following table. (That $\P'$ is an $(N,3^a,4^b,5^c,6^d)$-decomposition in Case 5, and that the entries in the second and third columns of the table are correct, can be checked using the definitions of $\P_0,\ldots,\P_3$, $a'$, $c'$, $a_3$, $b_3$, $c_3$ and $d_3$.)

\begin{center}
\begin{tabular}{|c|c|c|c|}
  \hline
  case & cycle type of $\mathcal{P}'$ & size of $L_1 \cup L_3$ & $(m,m')$ \\
  \hline
  1 & $(N,3^a,4^b,5^{c-2},6^d)$ & $10$ & $(5,5)$ \\
  2 & $(N,3^a,4^b,5^{c-1},6^{d-1})$ & $11$ & $(5,6)$\\
  3 & $(N,3^a,4^b,5^{c-1},6^{d-1})$ & $11$ & $(5,6)$ \\
  4 & $(N,3^{a-1},4^b,5^c,6^{d-1})$ & $9$ & $(3,6)$ \\
  \hline
\end{tabular}
\end{center}

So it remains to establish the existence of the packings $\P_0,\ldots,\P_3$. We first establish three useful facts.
\begin{description}[leftmargin=0mm]
    \item[(a) $\bm{\rho_3+4b_3+6d_3=|U_3|w}$.]
It follows from the definitions of $a'$, $c'$, $b_3$ and $d_3$ that $2a'+4b_3+4c'+6d_3+t=2a+4b+4c+6d+t-|U_1|w$. It follows from the definitions of $a_2$ and $c_2$ that $2a_2+4c_2=|U_2|w$. Thus, because $a_3=a'-a_2$ and $c_3=c'-c_2$, we have $2a_3+4b_3+4c_3+6d_3+t=(u-|U_1|-|U_2|)w$ by (ii).
    \item[(b) If $\bm{|U_3|=2}$, $\bm{t \in \{2,4\}}$ and $\bm{\rho_3 = 2w-2i}$, then $\bm{6 \leq c_3+t+i+1}$.]
Because $|U_3|=2$, $(u,|U_1|)=(5,3)$ by the definition of $U_2$ and thus $a \leq \frac{w}{2}-1$ by the definition of $U_1$. So from $2a_3+4c_3+t=2w-2i$ we deduce $c_3\geq \frac{w+2-t-2i}{4}$ and hence $c_3+t+i+1\geq\frac{w+3t+2i+6}{4}$. Because $w \geq 10$ and $t \geq 2$, the result follows.
    \item[(c) $\bm{\rho_3 \leq 4w}$ and, if $\bm{\rho_3 \geq 4w-6}$ and $\bm{t \in \{2,4\}}$, then $\bm{c_3 \geq 3}$.]
If $\rho_3 \leq 2w+8$, then $\rho_3 < 4w-6$ (note $w \geq 10$). If $\rho_3 > 2w+8$, then $|U_2| = u-|U_1|-4$ and $|U_3|=4$ by the definitions of $U_2$ and $U_3$. So $\rho_3+4b_3+6d_3=4w$ by (a) and hence $\rho_3 \leq 4w$. Furthermore, if we now suppose that $\rho_3 \geq 4w-6$, then $4b_3+6d_3 \leq 6$ and so $b_3+d_3 \leq 1$. Then $b+d \leq 2$, because $b \leq b_3$ and $d \leq d_3+1$. So by (v), $a'\leq 4$ and hence $a_3 \leq 4$. Because $2a_3+4c_3+t = \rho_3 \geq 4w-6$, it follows that $c_3 \geq 6$.
\end{description}

\noindent {\bf Proof that $\P_0$ exists.}
First observe that $3 \leq \lceil\frac{w}{4}\rceil \leq w$ because $w \geq 10$. We choose lengths $|E(C_1)|,\ldots,|E(C_{n})|$ with the required properties, which exist by Lemma~\ref{Lemma:3s5s} because $a_2+2c_2 \equiv 0 \mod{w}$. If $|U_3|=2$ or \eqref{Eq:xBoundsSmalltLeave} holds, then $a_3+c_3+m-t \in \{0\} \cup \{3,\ldots,w\}$ by Lemma~\ref{Lemma:SmalltLeave} with $(a,b,c,d)=(a_3,b_3,c_3,d_3)$ and $U'=U_3$ (the hypotheses are satisfied by (iii), (a), (b), and because either $\rho_3 \geq 10$ or $a_3+c_3 \geq 3$) and we let $|E(C^\dag_1)|=a_3+c_3+m-t$ and $|E(C^\dag_{2})|=0$. If $|U_3| \geq 4$ and \eqref{Eq:xBoundsSmalltLeave2} holds, then we let $|E(C^\dag_1)|$ and $|E(C^\dag_{2})|$ be the cycle lengths given by Lemma~\ref{Lemma:SmalltLeave2} with $(a,b,c,d)=(a_3,b_3,c_3,d_3)$ and $U'=U_3$ (the hypotheses are satisfied by (a) and (c)). Then, by Theorem~\ref{Theorem:Alspach}, a packing with the required properties exists by (iii) and because
\begin{align*}
{}&\textstyle\sum (N \setminus (m)) + |E(C^{\star})| + |E(C_1^{\dag})| +|E(C_2^{\dag})|+ |E(C_1)| + \cdots + |E(C_{n})|  \\
={}& (\tbinom{w}{2}+t-m-a-c) + |E(C^{\star})| + (m-t+a_3+c_3) + (a_2+c_2)\\
={}& \tbinom{w}{2}-(a+c)+(a'+c') + |E(C^{\star})| \\
={}& \tbinom{w}{2}-\tfrac{w}{2}.
\end{align*}
The first equality holds by (i) and the definitions of $C_1,\ldots,C_{n},C_1^\dag,C_2^\dag$. The second equality holds by the definitions of $a_3$ and $c_3$. The final equality holds because it follows from the definitions of $a'$, $c'$ and $C^{\star}$ that $a+c-a'-c'=\frac{w}{2}+|E(C^{\star})|$.

\noindent {\bf Proof that $\P_2$ exists.} This follows immediately by Lemma~\ref{Lemma:3s5s} because $|E(C_i)| \in\{\frac{w}{2},\ldots, w\}$ for $1\leq i\leq n$ and $\sum_{i=1}^n|E(C_{i})|=a_2+c_2$.

\noindent {\bf Proof that $\P_3$ exists.}
We established above that if $|U_3|=2$ or \eqref{Eq:xBoundsSmalltLeave} holds, then $C^\dag_{2}$ is trivial and we can apply  Lemma~\ref{Lemma:SmalltLeave} with $(a,b,c,d)=(a_3,b_3,c_3,d_3)$, $U'=U_3$ and $C=C^\dag_1$. Also, we established that if $|U_3| \geq 4$ and \eqref{Eq:xBoundsSmalltLeave2} holds, then we can apply  Lemma~\ref{Lemma:SmalltLeave2} with $(a,b,c,d)=(a_3,b_3,c_3,d_3)$, $U'=U_3$ and $(C_1,C_2)=(C^\dag_1,C^\dag_{2})$. Let $\P'_3$ be the packing produced by applying the appropriate lemma. In Cases 3, 4 and 5, $\P'_3$ is itself a packing with the required properties. In Case 1, we can obtain a packing with the required properties by removing a $3$-cycle and a $4$-cycle from $\P'_3$ (note that $a_3 \geq 1$ and $b_3 \geq 1$ in this case). In Case 2, we can obtain a packing with the required properties by removing two $4$-cycles from $\P'_3$ (note that $b_3 \geq 2$ in this case). In Cases 1 and 2 we will ensure that $L_1 \cup L_3$ has a vertex of degree $4$ when we construct $\P_1$ below.

\noindent {\bf Proof that $\P_1$ exists.}
If $a \geq \frac{w}{2}$, then $(a_1,c_1)=(\frac{w}{2},0)$, $C^\star$ is trivial and a packing with the required properties clearly exists. So we may assume that $a < \frac{w}{2}$. By Lemma~\ref{Lemma:1factor5cycles} there is a packing $\P'_1$ of $K_{U_1,W}\cup I \cup C^\star$ with $\lfloor\frac{3w}{4}\rfloor$ $5$-cycles with a reduced leave $L'_1$ such that $L'_1$ is trivial when $w \equiv 0 \mod{4}$, $L'_1$ is a $3$-cycle when $w \equiv 2 \mod{4}$, and $L'_1$ shares a vertex with $L_3$ when $w \equiv 2 \mod{4}$ and $a = 0$. If $w \equiv 0 \mod{4}$ or $a=0$, then $\P'_1$ is a packing with the required properties.  If $1 \leq a < \frac{w}{2}$ and $w \equiv 2 \mod{4}$, then $\P'_1 \cup \{L_1\}$ is a packing with the required properties.
\end{proof}

\section{Proof of Theorem~\ref{Theorem:MainTheorem}}\label{Section:MainTheorem}

This section contains the proof of Theorem~\ref{Theorem:MainTheorem}. Lemma~\ref{Lemma:FewOdd_general} dispenses with the case where the sum of odd entries in $(m_1,\ldots,m_\t)$ is small. In this case we can obtain the required decomposition using known cycle decomposition results for the complete graph and the complete bipartite graph. The remaining cases of Theorem~\ref{Theorem:MainTheorem} are proved by repeatedly applying Lemma~\ref{Lemma:GeneralJoining} to base decompositions given by Lemmas~\ref{Lemma:BaseDecomp_FewCrossOdds}, \ref{Lemma:BaseDecomp_ManyCrossOdds_large_m} and \ref{Lemma:BaseDecomp_ManyCrossOdds_small_m}.

\begin{lemma}\label{Lemma:FewOdd_general}
Let $u \geq 5$ and $w \geq 4$ be integers such that $u$ is odd and $w$ is even, and let $m_1,\ldots,m_\tau$ be a nondecreasing list such that the following hold
\begin{itemize}
	\item[(i)]
$m_1\geq 3$ and $m_\tau\leq \min (u,w)$;
	\item[(ii)]
$m_1+\cdots+m_\tau=\binom{u+w}{2}-\binom{u}{2}$; and
	\item[(iii)]
the sum of odd entries in $m_1,\ldots,m_\tau$ is at most $\frac{w(w-2)}{2}$.
\end{itemize}
Then there exists an $(m_1,\ldots,m_\tau)$-decomposition of $K_{u+w}-K_u$.
\end{lemma}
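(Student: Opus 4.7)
The plan is to exploit the edge-disjoint decomposition $K_{u+w}-K_u=K_{W'}\cup K_{U',W}$ obtained by choosing any $v_0\in U$ and setting $W'=W\cup\{v_0\}$ and $U'=U\setminus\{v_0\}$; then $|W'|=w+1$ is odd and $|U'|=u-1$ is even. The list $(m_1,\ldots,m_\tau)$ will be partitioned into sublists $M_1$ (to be realised inside $K_{W'}$ via Theorem~\ref{Theorem:Alspach}) and $M_2$ (to be realised inside $K_{U',W}$ via Theorem~\ref{Theorem:Bipartite}), and the required decomposition will be the union of the two.

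Because every cycle in the bipartite graph $K_{U',W}$ has even length, every odd entry must lie in $M_1$. Let $O$ and $E$ denote the odd and even sublists of $(m_1,\ldots,m_\tau)$; I will seek $M_1=O\cup E_1$ and $M_2=E_2$ for a partition $E=E_1\sqcup E_2$ satisfying $\sum M_1=\binom{w+1}{2}$ and $\sum M_2=(u-1)w$. Hypothesis (i) places each entry of $M_1$ in $\{3,\ldots,w+1\}$ and each entry of $M_2$ in $\{4,6,\ldots,w\}$, so the length requirements in Theorems~\ref{Theorem:Alspach} and~\ref{Theorem:Bipartite} hold automatically. The top-two-sum condition of Theorem~\ref{Theorem:Bipartite} on $M_2$ is also automatic: since every $m_i\leq\min(u,w)$, the two largest entries of $M_2$ sum to at most $2\min(u,w)$, which fits the bipartite bound in each of the sub-cases $u-1<w$, $u-1=w$, and $u-1>w$. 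Only the top-two-ratio condition on $M_2$ will need attention.

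From hypotheses (ii) and (iii) we have $\sum E=uw+\binom{w}{2}-\sum O\geq uw+w/2$, so $|E|\geq u+1\geq 6$. The target sum for $E_1$ is $T=\binom{w+1}{2}-\sum O=\sum E-(u-1)w$, which lies in $[3w/2,\binom{w+1}{2}]$. To construct the partition, sort $E$ nonincreasingly and greedily transfer the current largest entry into $E_1$ while its value strictly exceeds three times the next entry remaining in $E\setminus E_1$; this forces the eventual $E_2$ to have top-two ratio at most $3$. Any discrepancy between $\sum E_1$ and $T$ is then corrected by swapping small even entries (of value $4$, or $6$ where necessary) between $E_1$ and $E_2$, which leaves the top two of $E_2$ unchanged and preserves the ratio condition. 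A short case analysis---in particular when $E$ contains a single ``oversized'' entry together with many copies of $4$---using the congruence $(u-1)w\equiv 0\pmod{4}$ together with the bound $|E|\geq u+1$, will show the target sum $T$ is always realisable.

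With the partition in hand, Theorem~\ref{Theorem:Alspach} supplies an $M_1$-decomposition of $K_{W'}$ and Theorem~\ref{Theorem:Bipartite} supplies an $M_2$-decomposition of $K_{U',W}$, and their union is the desired $(m_1,\ldots,m_\tau)$-decomposition of $K_{u+w}-K_u$. The principal obstacle is the partition step, namely verifying that a valid $E_1,E_2$ split exists in every configuration allowed by the hypotheses; hypothesis (iii) is precisely what ensures enough even entries remain after accommodating $O$ for the greedy construction and subsequent exchange step to succeed.
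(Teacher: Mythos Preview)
Your overall plan---splitting $K_{u+w}-K_u$ as $K_{W\cup\{v_0\}}\cup K_{U\setminus\{v_0\},W}$ and sending all odd cycle lengths into the complete-graph piece---is exactly the paper's starting point. However, the crucial claim that one can always find a subset $E_1\subseteq E$ with $\sum E_1=T$ \emph{exactly} is false, and this is precisely the obstruction the paper works hard to overcome.

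Concretely, take $u=9$, $w=8$, and $M=(3,6^{15},7)$. This satisfies all three hypotheses: the sum is $100=\binom{17}{2}-\binom{9}{2}$, every entry lies in $\{3,\ldots,8\}$, and the odd entries sum to $10\leq 24=\tfrac{w(w-2)}{2}$. Here $E=(6^{15})$ and $T=\binom{9}{2}-10=26$, but every subset of $E$ sums to a multiple of $6$, so no $E_1$ with $\sum E_1=26$ exists. Your ``swap small even entries of value $4$ or $6$'' step cannot repair this, since there are no $4$s available; more generally, whenever the even entries share a common factor not dividing $T$, the exact split is impossible. The parenthetical appeal to $(u-1)w\equiv 0\pmod 4$ does not help either, since the issue is divisibility by $6$ (or other values), not by $4$.

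The paper's proof anticipates exactly this failure. It builds $M_1$ greedily so that the residual $t=(u-1)w-\sum M_1$ is small (specifically $0\leq t\leq w-4$), but does not insist on $t=0$. When $t>0$, it does \emph{not} try to realise $M_0$ and $M_1$ separately; instead it removes one or two carefully chosen entries from each side, uses Lemma~\ref{Lemma:Leave_PathDecomp} to obtain packings of $K_{U\setminus U_1,W}$ and $K_{W\cup U_1}$ whose leaves decompose into paths of prescribed lengths with endpoints in $W$, and then glues matching paths across the two pieces to form the missing cycles. This path-gluing is the essential extra idea your proposal is missing.
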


\begin{proof}
Let $U$ and $W$ be disjoint sets of size $u$ and $w$ respectively, let $U_1\subseteq U$ such that $|U_1|=1$, and let $M=m_1,\ldots,m_\tau$. We will form an $(M)$-decomposition of $K_{U\cup W}-K_U$ from a packing $\P_0$ of $K_{W\cup U_1}$ and a packing $\P_1$ of $K_{U\setminus U_1,W}$.

Let $n_1,\ldots,n_s$ where $n_1 \leq \cdots \leq n_s$ be the sublist of $M$ containing all of its even entries. Note that $n_1+\cdots+n_s \geq \binom{u+w}{2}-\binom{u}{2}-\frac{w(w-2)}{2}= uw+\frac{w}{2}$ by (ii) and (iii).
Let $s'$ be the largest element of $\{1,\ldots,s\}$ such that $n_{s'} \leq 3n_{s'-1}$. Observe that $n_1+\cdots+n_{s'} > w(u-1)$ because \[n_{s'+1}+\cdots+n_{s} < \medop\sum_{i=0}^{\infty} \mfrac{n_s}{3^i} \leq \medop\sum_{i=0}^{\infty} \mfrac{w}{3^i} < \mfrac{3w}{2}\]
where the first inequality follows because $n_{i} < \frac{1}{3}n_{i+1}$ for each $i \in \{s',\ldots,s-1\}$, and the second inequality follows because $n_s \leq w$ by (i).

We now define a sublist $M_1$ of $(n_1,\ldots,n_{s'})$ as follows. Begin with $M_1$ empty. Iteratively apply the following procedure: while there is an entry $x$ of $(n_1,\ldots,n_{s'}) \setminus M_1$ such that $\sum M_1 + x \leq w(u-1)$, add the largest such entry to $M_1$. When no such entry exists, terminate the procedure and fix $M_1$. Let $M_0=M \setminus M_1$, let $t$ be the integer such that $\sum M_1+t=w(u-1)$. Because $n_1+\cdots+n_{s'} > w(u-1)$, this procedure will terminate and $M_1$ will be a proper sublist of $(n_1,\ldots,n_{s'})$. Thus, the smallest even entry in $M_0$ is $n_{s''}$ for some $s'' \in \{1,\ldots,s'\}$. Also, $t$ is even because $\sum M_1$ and $w(u-1)$ are. We establish three more useful facts.

\begin{description}[leftmargin=0mm]
    \item[(a) $\bm{\sum M_1=w(u-1)-t}$ and $\bm{\sum M_0={w+1 \choose 2}+t$}.]
The former follows from the definition of $t$ and the latter follows from the former by the definition of $M_0$ and by (ii).
    \item[(b) $\bm{t \leq n_{s''}-2}$ and there are at least two even entries in $\bm{M_0}$.]
If $t$ were at least $n_{s''}$, then another even entry of $(n_1,\ldots,n_{s'})\setminus M_1$ would have been added to $M_1$ before the procedure terminated. So $t \leq n_{s''}-2$. Because $n_1+\cdots+n_{s} \geq  uw+\frac{w}{2}$, the even entries in $M_0$ sum to at least $\frac{3w}{2}+t$ and hence there are at least two by (i).
    \item[(c) $\bm{(n_{s'-u+1},\ldots,n_{s'})}$ is a sublist of $\bm{M_1}$ and $\bm{t \leq w-4}$.]
Because $n_{s'} \leq w$ by (i), the first $u-1$ entries added to $M_1$ are $n_{s'},n_{s'-1},\ldots,n_{s'-u+1}$. Thus, if $t=w-2$, then $n_{s''}=w$ by (b) and it would follow that $M_1=(w^{u-1})$ and $t=0$.
\end{description}

If $t=0$, then an $(M)$-decomposition of $K_{U\cup W}-K_U$ is given by $\P_0\cup \P_1$, where $\P_0$ is an $(M_0)$-decomposition of $K_{W\cup U_1}$, and $\P_1$ is an $(M_1)$-decomposition of $K_{U\setminus U_1,W}$. Noting (a), (c) and (i), we see that $\P_0$ exists by Theorem~\ref{Theorem:Alspach} and $\P_1$ exists by Theorem~\ref{Theorem:Bipartite}. Thus we can assume that $t \in \{2,4,\ldots,w-4\}$.

We now define integers $p$, $p^{\dag}$, $b$ and $b^{\dag}$ and (possibly empty) lists $M_0'$ and $M_1'$. We will then show that there exists an $(M_0\setminus M_0')$-packing $\P_0$ of $K_{W\cup U_1}$ whose reduced leave $L_0$ is the edge-disjoint union of a $p$-path and a $p^{\dag}$-path, an $(M_1 \setminus M_1')$-packing $\P_1$ of $K_{U\setminus U_1,W}$ whose reduced leave $L_1$ is the edge-disjoint union of a $b$-path and a $b^{\dag}$-path such that there exists an $(M_0',M_1')$-decomposition $\P_2$ of $L_0 \cup L_1$. This will suffice to complete the proof as $\P_0 \cup \P_1 \cup \P_2$ will be an $(M)$-decomposition of $K_{U\cup W}-K_U$.
\begin{itemize}
    \item
If there is an entry $q$ in $M_0$ that is at least $t+3$, then let $M_0'=(q)$ and let $M_1'=(r)$ where $r$ is the smallest entry of $M_1$. Let $b=t+2$, $b^{\dag}=r-2$, $p=q-t-2$ and $p^{\dag}=2$.
    \item
If $t \geq 4$ and each entry in $M_0$ is at most $t+2$, then $M_0$ contains at least two entries equal to $t+2$ by (b). Let $M_0'=(t+2,t+2)$, and let $M_1'$ be empty. Let $b=2$, $b^{\dag}=t-2$, $p=t$ and $p^{\dag}=4$.
    \item
If $t = 2$ and each entry in $M_0$ is at most $4$, then $M_0$ contains at least two entries equal to $4$ by (b). Let $M_0'=(4,4)$ and let $M_1'=(r)$ where $r$ is the smallest entry of $M_1$. Let $b=4$, $b^{\dag}=r-2$, $p=4$, and $p^{\dag}=2$.
\end{itemize}
In each case note that $p+p^{\dag}=\sum M_0'-t$ and $b+b^{\dag} = \sum M_1'+t$. Hence $\sum(M_0\setminus M_0')+p+p^{\dag}=\binom{w+1}{2}$ and $\sum(M_1\setminus M_1')+b+b^{\dag}=w(u-1)$ by (a).

\noindent{\bf Proof that $\P_1$ exists.} Using (a), (b), (c) and (i), it can be checked that by Lemma~\ref{Lemma:Leave_PathDecomp} there is an $(M_1\setminus M_1')$-packing of $K_{U\setminus U_1,W}$ whose reduced leave has a decomposition into a $b$-path $B$ and a $b^{\dag}$-path $B^{\dag}$ with end vertices $x$ and $y$ in $W$ (apply Lemma~\ref{Lemma:Leave_PathDecomp}(ii) with $m_i=b+b^{\dag}$ if $2\in\{b,b^{\dag}\}$ and Lemma~\ref{Lemma:Leave_PathDecomp}(i) with $m_i=b$ and $m_j=b^{\dag}$ otherwise).

\noindent{\bf Proof that $\P_0$ exists.}
First suppose that $M_0' \neq (4,4)$. Using (a), (c) and (i), there is an $(M_0\setminus M_0',p+p^{\dag})$-decomposition of $K_{W\cup U_1}$ by Theorem~\ref{Theorem:Alspach} (in each case $3\leq p+p^{\dag}\leq w+1$). Let $\P_0$ be the result of removing a $p+p^{\dag}$ cycle from this decomposition and permuting vertices so that the reduced leave of the resulting packing is the edge-disjoint union of paths $P$ and $P^{\dag}$ with end vertices $x$ and $y$ such that $V(P) \cap V(B) = V(P^{\dag}) \cap V(B^{\dag})=\{x,y\}$. This relabelling is possible provided that $|V(B) \cap V(B^{\dag}) \cap W|+p+p^{\dag}-2$, $|V(B) \cap W|+p-1$ and $|V(B^{\dag}) \cap W|+p^{\dag}-1$ are each at most $w+1$ (for a proof of this, see \cite[Lemma 5.2]{Horsley12}). These inequalities can be checked using (c) and the facts that $|V(B) \cap V(B^{\dag}) \cap W| \leq |V(B) \cap W| = \frac{b+2}{2}$ and $|V(B^{\dag}) \cap W| = \frac{b^{\dag}+2}{2}$.

Now suppose that $M_0' = (4,4)$. We form $\P_0$ as above, except that we permute vertices so that $V(P) \cap V(B) = \{x,y,z\}$ where $V(B) \cap W = \{x,y,z\}$ and $z$ is not adjacent to $x$ or $y$ in $P$.

In each case the properties of $B$, $B^{\dag}$, $P$ and $P^{\dag}$ ensure that there is an $(M_0',M_1')$-decomposition $\P_2$ of $L_0 \cup L_1$.
\end{proof}

We introduce some more notation. For a list $M$ and an integer $m$ we let $\nu_m(M)$ denote the number of entries of $M$ that are equal to $m$. For a list $M$, let $\nu(M)$ denote the total number of entries of $M$ and let $\no(M)$ denote the number of odd entries of $M$. For a nondecreasing list $M=(m_1,\ldots,m_s)$, we say that $M$ is \emph{well-behaved} if $m_{s} \leq 3m_{s-1}$.

We say that a list $R$ is a \emph{refinement} of an integer $m \geq 3$ if $\sum R=m$, each entry of $R$ is at least 3 and at most one entry of $R$ is odd. For any integer $m \geq 3$ the list $(m)$ is a refinement of $m$. We say that a list $R$ is a refinement of a list $M=(m_1,\ldots,m_s)$ if $R$ can be reordered as $(R_1,\ldots,R_s)$ where $R_i$ is a refinement of $m_i$ for each $i \in \{1,\ldots,s\}$. The fact that $\no(R)=\no(M)$ is crucial and we will use it frequently. The \emph{basic refinement} of an integer $m \geq 3$ is $R$ where
\[
R =
\left\{
  \begin{array}{ll}
    (4^{m/4}), & \hbox{if $m \equiv 0 \mod{4}$;} \\
    (3,4^{(m-9)/4},6), & \hbox{if $m \equiv 1 \mod{4}$ and $m \geq 9$;} \\
    (4^{(m-6)/4},6), & \hbox{if $m \equiv 2 \mod{4}$;} \\
    (3,4^{(m-3)/4}), & \hbox{if $m \equiv 3 \mod{4}$;} \\
    (5), & \hbox{if $m=5$.}
  \end{array}
\right.
\]
We say that a list $R$ is the basic refinement of a list $M=(m_1,\ldots,m_s)$ if $R$ can be reordered as $(R_1,\ldots,R_s)$ where $R_i$ is the basic refinement of $m_i$ for each $i \in \{1,\ldots,s\}$.

Lemma~\ref{Lemma:JoinEmAll} shows how Lemma~\ref{Lemma:GeneralJoining} can be repeatedly applied to our base decompositions to obtain the decompositions required by Theorem \ref{Theorem:MainTheorem}.

\begin{lemma}\label{Lemma:JoinEmAll}
Let $u \geq 5$ and $w \geq 4$ be integers, let $N$ and $Z=(z_1,\ldots,z_q)$ be nondecreasing lists of integers such that $z_q\leq \min (u,w,3z_{q-1})$, and let $R$ be a refinement of $Z$. If there exists an $(N,R)$-decomposition of $K_{u+w}-K_u$ that includes cycles with lengths $R$ that each contain at most one pure edge, then there exists an $(N,Z)$-decomposition of $K_{u+w}-K_u$.
\end{lemma}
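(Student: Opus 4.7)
The plan is to prove the lemma by induction on $|R|-|Z|$, at each step applying Lemma~\ref{Lemma:GeneralJoining} once to combine two cycles whose lengths lie in some $R_i$ into a single cycle of length equal to their sum. If $|R|=|Z|$ then $R=Z$ as multisets and there is nothing to prove. For the inductive step I will identify entries $m_1,m_2\in R_i$ (for some $i$ with $|R_i|\geq 2$) and a helper cycle $h$ in the current decomposition such that removing those three cycles from the decomposition leaves a packing to which Lemma~\ref{Lemma:GeneralJoining} applies. Its output supplies a repacking whose reduced leave decomposes into an $(m_1+m_2)$-cycle and an $h$-cycle, each containing at most one pure edge; reinstating these gives an $(N,R')$-decomposition where $R'$ is obtained from $R$ by replacing $m_1,m_2$ in $R_i$ with their sum. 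Since $m_1+m_2$ is odd iff exactly one of $m_1,m_2$ is odd, the new $R_i$ still has at most one odd entry, so $R'$ is again a refinement of $Z$, the ``at most one pure edge'' condition is preserved, and the inductive hypothesis will finish the job.

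The pure-edge and size hypotheses of Lemma~\ref{Lemma:GeneralJoining} will be straightforward to verify provided $h$ is taken from among the cycles whose lengths lie in $R$. Each such cycle has at most one pure edge and, since a cycle has an odd number of pure edges iff it has odd length, each one is odd iff it carries a pure edge. Combined with the fact that every $R_j$ has at most one odd entry, this gives $\mu\leq 2$ with $\mu=2$ only when $h$ itself is odd. Because each of $h,m_1,m_2$ has length at most $z_q\leq\min(u,w)$ and $m_1+m_2\leq z_i\leq\min(u,w)$, we also obtain $m_1+m_2+h\leq 2\min(u,w)\leq\min(2u+3,2w+1,u+w)$. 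Thus only the bound $m_1+m_2\leq 3h$ requires any real care.

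To meet this bound I will split into two cases. If some $R_i$ has $|R_i|\geq 3$, I will take $m_1,m_2$ to be its two smallest entries and $h$ to be any other entry of $R_i$; then $h\geq m_2$ gives $m_1+m_2\leq 2h\leq 3h$. Otherwise every $R_i$ satisfies $|R_i|\in\{1,2\}$, and I will pick $i$ minimal with $|R_i|=2$ (such $i$ exists since $|R|>|Z|$), so that $R_i=(m_1,m_2)$ with $m_1+m_2=z_i$. If $i<q$, I will take $h$ to be the $z_q$-cycle (when $|R_q|=1$) or the larger of the two cycles making up $R_q$ (when $|R_q|=2$), which yields $h\geq z_q/2\geq z_i/2\geq(m_1+m_2)/3$. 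If $i=q$, the minimality of $i$ ensures every $R_j$ with $j<q$ already consists of a single $z_j$-cycle, and I will take $h$ to be the $z_{q-1}$-cycle; the hypothesis $z_q\leq 3z_{q-1}$ then gives $h\geq z_q/3=(m_1+m_2)/3$.

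The main obstacle is precisely this last sub-case: guaranteeing an external helper of sufficient length when the two cycles we need to merge are the only survivors inside $R_q$. The minimality of the chosen $i$ is what makes the argument go through, since it forces $R_{q-1}$ to have been fully assembled already, and the hypothesis $z_q\leq 3z_{q-1}$ is exactly what ensures that the resulting $z_{q-1}$-cycle is long enough to serve as helper. Once $i$, $m_1$, $m_2$ and $h$ have been chosen in either case, all remaining verifications reduce to those collected in the second paragraph and the inductive step is complete.
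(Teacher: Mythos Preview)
Your proof is correct and follows essentially the same approach as the paper: induction on the size of $R$, reducing by one at each step via a single application of Lemma~\ref{Lemma:GeneralJoining}, with a short case analysis to guarantee $m_1+m_2\leq 3h$. The paper's case split is slightly different (it distinguishes ``exactly one $R_i$ with at least two entries'' versus ``at least two such $R_i$'', and always takes the helper cycle from a different $R_j$), but your variant---taking $h$ from within the same $R_i$ when $|R_i|\geq 3$, and otherwise choosing $i$ minimal---works just as well and verifies the same hypotheses.
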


\begin{proof}
Assume that there exists such an $(N,R)$-decomposition $\D$ of $K_{u+w}-K_u$. Let $\ell$ be the number of entries in $R$. Note that $\ell \geq q$, and that if $\ell=q$, then $R=Z$ and the result is obviously true. So suppose that $\ell > q$. By induction, it suffices to show that there is an $(N,R')$-decomposition $\D'$ of $K_{u+w}-K_u$ where $R'$ is a refinement of $Z$ with $\ell-1$ entries and $\D'$ contains cycles with lengths $R'$ that each contain at most one pure edge. Let $R_1,R_2,\ldots,R_{q}$ be a reordering of $R$ so that $R_i$ is a refinement of $z_i$ for $i\in\{1,\ldots,q\}$.

\noindent \textbf{Case 1.} Suppose that there is exactly one list $R_i$ in $R_1,\ldots,R_q$ such that $\nu(R_i)\geq 2$. Let $R_i=a_1,\dots,a_{\nu(R_i)}$ and let $j=q$ if $i \neq q$ and $j=q-1$ if $i=q$.
Let $C_1$, $C_2$ and $C_3$ be cycles in $\D$ of lengths $a_1$, $a_2$ and $z_j$ that each contain at most one pure edge. We can obtain a decomposition $\D'$ with the required properties by applying Lemma~\ref{Lemma:GeneralJoining} to $\D \setminus \{C_1,C_2,C_3\}$ with $h=z_j$, $m_1=a_1$ and $m_2=a_2$. We have $m_1+m_2+h \leq z_i+z_j \leq 2\min(u,w)$ from our hypotheses. We have $m_1+m_2\leq 3h$ because either $a_1+a_2\leq z_i\leq z_q=h$ or $(i,j)=(q,q-1)$, in which case $a_1+a_2\leq z_q \leq 3z_{q-1}$ by our hypotheses.

\noindent \textbf{Case 2.} Suppose that there are at least two lists in $R_1,\ldots,R_q$ that each have at least two entries. Let $r$ be the largest entry in $R_1,\ldots,R_q$ and let $i\in \{1,\ldots,q\}$ such that $r$ is an entry of $R_i$. Let $j$ be an element of $\{1,\ldots,q\} \setminus \{i\}$ such that $\nu(R_j)\geq 2$ and let $R_j=a_1,\ldots,a_{\nu(R_j)}$. Let $C_1$, $C_2$ and $C_3$ be cycles in $\D$ of lengths $a_1$, $a_2$ and $r$ that each contain at most one pure edge. We can obtain a decomposition $\D'$ with the required properties by applying Lemma~\ref{Lemma:GeneralJoining} to $\D \setminus \{C_1,C_2,C_3\}$ with $h=r$, $m_1=a_1$ and $m_2=a_2$. We have $m_1+m_2+h \leq z_i+z_j \leq 2\min(u,w)$ from our hypotheses. We have $m_1+m_2\leq 3h$ because $a_1,a_2\leq r$.
\end{proof}

We are now ready to prove our main result.

\begin{proof}[{\bf Proof of Theorem~\ref{Theorem:MainTheorem}}]
If there exists an $(m_1,\dots,m_\tau)$-decomposition of $K_{u+w}-K_u$ then (i)--(iv) hold by Lemma~\ref{Lemma:NecessaryConditions}. So it remains to show, for any integers $u \geq 1$ and $w \geq 10$ and list $M=(m_1,\dots,m_\tau)$, that if $m_\tau\leq \min (u,w,3m_{\tau-1})$ and (i)--(iv) hold, then there exists an $(M)$-decomposition of $K_{u+w}-K_u$.

If $m_1=m_2=\cdots=m_\tau$ then the result follows by \cite[Theorem 1.2]{Horsley12} ($m_i$ even) or \cite[Theorem 3]{DWcurrent} ($m_i$ odd). If $u=1$, there is an $(M)$-decomposition of $K_{w+1}$ by Theorem~\ref{Theorem:Alspach} and $K_{w+1}=K_{w+1}-K_1$. If $u=3$, there is an $(M,3)$-decomposition of $K_{w+3}$ by Theorem~\ref{Theorem:Alspach} and deleting the edges of a $3$-cycle produces an $(M)$-decomposition of $K_{w+3}-K_3$.
If the sum of odd entries in $M$ is at most $\frac{w(w-2)}{2}$, then the result follows by Lemma~\ref{Lemma:FewOdd_general}. Thus we can suppose that $m_1<m_\tau$, $u\geq 5$, and the sum of odd entries in $M$ is greater than $\frac{w(w-2)}{2}$.

We will proceed as follows. First we choose a sublist $Z$ of $M$ such that $Z$ is well-behaved. Then we define a refinement $R=(3^a,4^b,5^c,6^d,k)$ of $Z$ such that $a$, $b$, $c$, $d$ and $M\setminus Z$ satisfy the hypotheses of Lemma~\ref{Lemma:BaseDecomp_FewCrossOdds}, \ref{Lemma:BaseDecomp_ManyCrossOdds_large_m} or  \ref{Lemma:BaseDecomp_ManyCrossOdds_small_m} ($R$ is not always the basic refinement of $Z$ but it is always `close' to it). The appropriate lemma will then yield an $(M\setminus Z,R)$-decomposition $\D$ of $K_{u+w}-K_u$ that contains cycles with lengths $R$ that each contain at most one pure edge. Applying Lemma~\ref{Lemma:JoinEmAll} will then produce an $(M)$-decomposition of $K_{u+w}-K_u$. So it remains to define $Z$ and $R$, and to show that the hypotheses of Lemma~\ref{Lemma:BaseDecomp_FewCrossOdds}, \ref{Lemma:BaseDecomp_ManyCrossOdds_large_m} or  \ref{Lemma:BaseDecomp_ManyCrossOdds_small_m} are satisfied.

Throughout this proof we employ some notational shorthand concerning lists. For a list $X$, a set $S$ and an integer $x$, we write $x \in X$ if at least one entry of $X$ is equal to $x$, $X \subseteq S$ if each entry of $X$ is an element of $S$, and $\max_{\rm e}(X)$ for the largest even entry of $X$. For a sublist $X=x_1,\ldots,x_s$ of $M$, we define $\sum_{\rm e}X=\sum_{i=1}^s\efloor{x_i}$ and $t(X)=uw-\sum_{\rm e}X$. Note that $\sum_{\rm e}X$ can also be written as $\sum X-\nu_{\rm o}(X)$. Then $\sum_{\rm e}X=uw-t(X)$ and, by (iii), $\sum(M\setminus X)= \binom{w}{2}-\no(X)+t(X)$. Clearly $t(X)$ is always even. We abbreviate $t(Z)$ to $t$.

The proof splits into three cases, depending on $\no(M)$ and $\nu_5(M)$.

\noindent {\bf Case 1.}  Suppose that $\no(M)-\nu_5(M)\geq \frac{w}{2}+3$. We aim to satisfy the hypotheses of either Lemma \ref{Lemma:BaseDecomp_ManyCrossOdds_large_m} (in Case 1a) or Lemma \ref{Lemma:BaseDecomp_ManyCrossOdds_small_m} (in Case 1b). We choose $Z$ according to the following procedure.

\begin{itemize}
    \item[1.]
Let $Z''$ be the list consisting of the $\frac{w}{2}+3$ largest odd entries of $M$ that are not equal to $5$.
    \item[]
Each entry in $M$ is at most $\min(u,w) \leq u$, so $\sum_{\rm e} Z'' \leq (u-1)(\frac{w}{2}+3)$ and hence $t(Z'') \geq  \frac{(u+1)(w-6)}{2} +6 \geq 2u+8$. Below, this will imply that $\nu(Z'\setminus Z'')\geq 2$.
    \item[2.]
Begin with $Z'=Z''$ and repeatedly add the largest entry of $M\setminus Z'$ to $Z'$, until $M \setminus Z'$ is empty or $Z'$ satisfies $t(Z') \leq \max(M\setminus Z')-2$.
    \item[]
It follows from this definition that $t(Z') \geq 0$ (note that $t(Z')$ is even). Because $\sum(M\setminus Z')= \binom{w}{2}-\no(Z')+t(Z')$, it follows from (iv) that $t(Z') = 0$ if $M\setminus Z'$ is empty. If $t(Z') \geq 2$, then $t(Z') \leq \max(M\setminus Z')-2 \leq w-2$ and each entry in $Z' \setminus Z''$ is at least $\max(M \setminus Z')$.
    \item[3.1]
If $t(Z') \neq 2$, then let $Z=Z'$ and let $m=0$ if $t=0$ and $m=\max(M\setminus Z')$ if $t>0$. Note that $t=t(Z')$.
    \item[3.2]
If $t(Z') = 2$ and $M\setminus Z' \nsubseteq \{3,w-1,w\}$, then let $Z=Z'$ and let $m$ be the largest entry in $M\setminus Z'$ such that $4 \leq m \leq w-2$. Note that $t=2$.
    \item[3.3]
If $t(Z') = 2$, $M\setminus Z' \subseteq \{3,w-1,w\}$ and $3 \in M\setminus Z'$, then let $Z=(Z',3)$ and let $m=0$. Note that $t=0$.
    \item[3.4]
If $t(Z') = 2$, $M\setminus Z' \subseteq \{w-1,w\}$ and $w \in M\setminus Z'$, then let $Z=Z'\setminus(\min(Z''))$, and let $m=w$. Note that $t=2+\efloor{\min(Z'')}$.
    \item[]
We have $\min(Z'') \leq w-3$ and hence $t \leq w-2$. Otherwise, because $Z' \setminus Z'' \subseteq \{w\}$, $Z'=(w^i,(w-1)^{w/2+3})$ for some $i$ and, by the definition of $t(Z')$, $t(Z') \equiv 6 \mod{w}$. This contradicts $t(Z') = 2$.
    \item[3.5]
If $t(Z') = 2$ and $M\setminus Z'\subseteq\{w-1\}$, then let $Z=(Z'\setminus (w),w-1)$, and let $m=w-1$. Note that $t=4$.
    \item[]
There is a $w$ in $Z'$ for otherwise $Z'\subseteq \{w-1\}$ and hence $M\subseteq \{w-1\}$ which contradicts $m_1<m_\tau$. Further, because $t(Z') = 2$ and $Z'=(w^i,(w-1)^j)$ for some $i$ and $j$, we have $j \equiv 1 \mod{\tfrac{w}{2}}$ and hence can deduce from $\sum(M\setminus Z')= \binom{w}{2}-\no(Z')+t(Z')$ that $M\setminus Z' = ((w-1)^h)$ for some $h \equiv -1 \mod{\tfrac{w}{2}}$. Thus $h \geq 2$ and so $m \in M\setminus Z$.
\end{itemize}

We first show that $Z$ is well-behaved. For $i \in \{\tau-1,\tau\}$, because $\nu(Z' \setminus Z'') \geq 2$, if $m_i$ is not added to $Z''$ in step 1 then it is added to $Z'$ in step 2. So unless our procedure terminates at step 3.5, $(m_{\tau-1},m_\tau)$ is a sublist of $Z$. If the procedure terminates at step 3.5, then $Z\subseteq \{w-1,w\}$, and it follows that $Z$ is well-behaved.

Let $k=\eceil{\frac{t+2}{3}}$ if $t\geq 12$ and $k=0$ otherwise. We now note some important properties that hold for any refinement $(3^a,4^b,5^c,6^d,k)$ of $Z$.
\begin{itemize}
    \item[(a)]
$2a+4b+4c+6d+k+t=uw$ and $\sum(M\setminus Z)+a+c=\binom{w}{2}+t$. These properties follow because $\sum_{\rm e}Z=uw-t$ and $\sum(M\setminus Z)= \binom{w}{2}-\no(Z)+t$.
    \item[(b)]
Either $(m,t)=(0,0)$, or $t \geq 2$ and $t+2 \leq m \leq w$. This is easy to check in each case.
    \item[(c)]
$a+c \geq \frac{w}{2}+2$ and, if $m < w$, $a+c \geq \frac{w}{2}+3$. This follows because $a+c=\no(Z)$, $\no(Z'') = \frac{w}{2}+3$ and either $Z''$ is a sublist of $Z$ or $Z'' \setminus (\min(Z''))$ is a sublist of $Z$ (the latter occurs only in case 3.4 when $m=w$).
    \item[(d)] $m \in M \setminus Z$ if $m > 0$, and $\min(Z \setminus Y) \geq m$, where $Y=Z''\setminus (\min(Z''))$ if the procedure terminates at step 3.4 and $Y= Z''$ otherwise.
This is clear if $m=0$, so we may suppose that $m > 0$ and, by (b), that $t \geq 2$. It is easy to check in each case that $m\in M\setminus Z$ and also that $m \in M \setminus Z'$, and it follows that $\max(M \setminus Z') \geq m$.
We noted $\min (Z'\setminus Z'') \geq \max(M \setminus Z')$ after step 2. If the procedure terminated at a step other than 3.5, then  $\min(Z\setminus Y)\geq \min(Z'\setminus Z'')$ and the statement holds (note it did not terminate at step 3.3 because $m >0$). If the procedure terminated at step 3.5, then $Z\setminus Y\subseteq \{w-1,w\}$ and $m=w-1$, so the statement holds.
\end{itemize}

\noindent {\bf Case 1a.} Suppose that $m \geq 7$. Then $t \geq 2$ by (b). Let $x=\max(Z)$. Let $(3^a,4^b,5^c,6^d)$ be the basic refinement of $(Z\setminus(x),x-k)$ if $x-k \neq 9$ and the basic refinement of $(Z\setminus(x),4,5)$ if $x-k = 9$. Note that, by (d) and (b), $x \geq m \geq t+2$. Thus $x-k=x \geq 7$ if $t \leq 10$ and, if $t \geq 12$, $x-k \geq t+2-\eceil{\frac{t+2}{3}} \geq 8$ . It can be seen that $a$, $b$, $c$, $d$ and $M\setminus Z$ satisfy the conditions of Lemma~\ref{Lemma:BaseDecomp_ManyCrossOdds_large_m} using (a) -- (d) and the following facts.
\begin{itemize}
    \item
$c \in \{0,1\}$ and $a=\no(Z)-c \geq \frac{w}{2}+1$. We have $c=0$ if $x-k \neq 9$ and $c=1$ if $x-k = 9$ because $5 \notin Y$ by the definition of $Y$, $5 \notin Z\setminus Y$ by (d), and $x-k \neq 5$. Then $a=\no(Z)-c \geq \frac{w}{2}+1$ using (c).
    \item
$b \geq 1$. This is obvious if $x-k = 9$. If $x-k \neq 9$, the basic refinement of $x-k$ contains a $4$ (recall that $x-k \geq 7$).
    \item
Either $a \leq \frac{w}{2}+3$ or $uw \geq (a+c)\efloor{m}$. The former holds if $\min(Z'') < m$ because then each odd entry in $M \setminus Z''$ is less than $m$ by the definition of $Z''$ and so every entry of $Z\setminus Y$ is even by (d). The latter holds if $\min(Y) \geq m$ because then $\min(Z) \geq m$ by (d) and so $(a+c)\efloor{m}=\no(Z)\efloor{m} \leq \sum_{\rm e} Z \leq uw-t$.
    \item
$(m,t) \neq (w,2)$ and, if $a \geq \frac{w}{2}+4$, then $(m,t) \notin \{(w-1,2),(w,4)\}$. Clearly $(m,t) \neq (w,2)$ and $(m,t) \neq (w-1,2)$ because if $t=2$, then the procedure terminated at step 3.2 and $m \leq w-2$. If $(m,t) = (w,4)$, then $Z\setminus Y \subseteq \{w\}$ by (d) and hence $a \leq \no(Z) = \no(Y) \leq \frac{w}{2}+3$.
\end{itemize}

\noindent {\bf Case 1b.} Suppose that $m\leq 6$. Then $t \in \{0,2,4\}$ by (b) and $k=0$. Let $(3^a,4^b,5^c,6^d)$ be the basic refinement of $Z$. It can be seen that $a$, $b$, $c$, $d$ and $M\setminus Z$ satisfy the conditions of Lemma~\ref{Lemma:BaseDecomp_ManyCrossOdds_small_m} using (a) -- (d) and the following facts.
\begin{itemize}
    \item
If $u=5$ then $m_\tau\leq 5$ by our hypotheses so $d=0$.
    \item

$a \geq \frac{w}{2}+2$ because $\no(Y)\geq \frac{w}{2}+2$, $5\notin Y$, and $Y$ is a sublist of $Z$.
    \item
If $b+d\leq 2$ and $t \in\{2,4\}$, then $a \leq \frac{w}{2}+3$. Because $b+d\leq 2$, $Z\setminus (y_1,y_2)\subseteq\{3,5\}$ for some $y_1,y_2 \in Z$ (note that the basic refinement of any integer in $\{3,\ldots,w\} \setminus \{3,5\}$ contains a $4$ or a $6$). For $i \in \{1,2\}$, if $y_i \geq 7$ and $y_i$ is odd, then $y_i \in Y$ and $y_i \notin Z \setminus Y$. Because $t \geq 2$, $3 \notin Z \setminus Y$, using (d) and (b). Thus any odd entries in $Z \setminus Y$ are $5$s and $a \leq \frac{w}{2}+3$.
\end{itemize}

\noindent {\bf Case 2.} Suppose that $\no(M)-\nu_5(M) < \frac{w}{2}+3$, $\nu_5(M)\geq w$ and $\no(M)\geq w+4$. We aim to satisfy the hypotheses of Lemma \ref{Lemma:BaseDecomp_ManyCrossOdds_small_m}. We choose $Z$ according to the following procedure.
\begin{itemize}
    \item[1.]
Begin with $Z''=(5^{\lceil3w/4\rceil})$ and add the four largest odd entries of $M\setminus (5^w)$ to $Z''$. Note that $\nu_5(M\setminus Z'') \geq \lfloor\frac{w}{4}\rfloor$ because $\nu_5(M)\geq w$.
    \item[]
Because each odd entry in $M$ is at most $\min(u,w-1) \leq u$, $\sum_{\rm e}Z'' \leq 3w+2+4(u-1)$ and hence $t(Z'') \geq (u-3)(w-4)-10$. This implies that $t(Z'') > 0$ (note that $u \geq 5$ and $w \geq 10$).
    \item[2.]
Begin with $Z'=Z''$ and repeatedly add the largest entry of $M\setminus Z'$ to $Z'$, until $M\setminus Z'$ is empty or $Z'$ satisfies $t(Z') \leq \max(M\setminus Z')-2$.
    \item[]
It follows from this definition that $t(Z') \geq 0$ (note that $t(Z')$ is even). Because $\sum(M\setminus Z')= \binom{w}{2}-\no(Z')+t(Z')$, it follows from (iv) that $t(Z') = 0$ if $M\setminus Z'$ is empty. If $t(Z') \geq 2$, then $t(Z') \leq \max(M\setminus Z')-2 \leq w-2$ and each entry in $Z' \setminus Z''$ is at least $\max(M \setminus Z')$.
    \item[3.1]
If $M\setminus Z'$ is empty or $\max(M\setminus Z') \leq 6$, then let $Z=Z'$. Let $m=0$ if $t=0$ and $m=\max(M\setminus Z)$ if $t>0$. Note that $t=t(Z')\leq 4$.
    \item[3.2]
If $\max(M\setminus Z') > 6$, then let $Z=(Z',5^i)$ where $i=\lfloor\frac{t(Z')}{4}\rfloor$ and let $m=0$ if $t=0$ and $m=5$ if $t=2$. Observe that $t=0$ if $t(Z') \equiv 0 \mod{4}$ and $t=2$ if $t(Z') \equiv 2 \mod{4}$.
    \item[]
To see that $Z$ is a sublist of $M$ note that
\[\nu_5(M\setminus Z') = \nu_5(M\setminus Z'') \geq \lfloor\tfrac{w}{4}\rfloor \geq \lfloor\tfrac{t(Z')}{4}\rfloor = i\]
because $\min(Z' \setminus Z'')\geq\max(M\setminus Z')>6$ and $t(Z') \leq w-2$. Furthermore, $\lfloor\tfrac{w}{4}\rfloor > i$ when $t(Z') \equiv 2 \mod{4}$ and so $5 \in M\setminus Z$ when $t=2$.
\end{itemize}

We first show that $Z$ is well-behaved. This is obvious if $u \leq 7$ and hence $m_\tau\leq 7$ by our hypotheses. If $u \geq 9$ then $t(Z'') \geq (u-3)(w-4)-10 > 2w$ and $\nu(Z'\setminus Z'')\geq 2$. Thus, for $i \in \{\tau-1,\tau\}$ if $m_i$ is not added to $Z''$ in step 1 then it is added to $Z'$ in step 2. So $(m_{\tau-1},m_\tau)$ is a sublist of $Z$ and $Z$ is well-behaved.

Let $(3^a,4^b,5^c,6^d)$ be the basic refinement of $Z$. It can be seen using arguments similar to those of Case 1b that $a$, $b$, $c$, $d$ and $M\setminus Z$ satisfy the hypotheses of Lemma~\ref{Lemma:BaseDecomp_ManyCrossOdds_small_m}. Note that $c \geq \frac{3w}{4}$ and $a+c \geq \frac{3w}{4}+4$ because $Z''$ is a sublist of $Z$. If $b+d\leq 2$ and $t \geq 2$, then $a \leq 4$ because, by arguments similar to those used in Case 1b, $(Z\setminus (y_1,y_2))\subseteq\{3,5\}$ for some $y_1,y_2 \in Z$ and the only odd entries in $Z \setminus Z''$ are 5s.

\noindent {\bf Case 3.} Suppose that $\no(M)-\nu_5(M)<\frac{w}{2}+3$ and that either $\no(M)<w+4$ or $\nu_5(M)<w$. We aim to satisfy the hypotheses of Lemma~\ref{Lemma:BaseDecomp_FewCrossOdds}. Accordingly we redefine $t(X)$ and $t$. For a sublist $X$ of $M$, we define $t(X)=(u-1)w-\sum_{\rm e}X$. Then $\sum_{\rm e}X=(u-1)w-t(X)$ and, by (iii), $\sum(M\setminus X)= \binom{w+1}{2}-\no(X)+t(X)$. Again, $t(X)$ is always even and we abbreviate $t(Z)$ to $t$. Observe that $\no(M) > \frac{w-2}{2} \geq 3$ since the sum of odd entries in $M$ is greater than $\frac{w(w-2)}{2}$. Let $\sigma$ be the sum of the $\nu_{\rm o}(M)-3$ smallest odd entries in $M$.

\noindent {\bf Case 3a.} Suppose further that $\sigma \leq \binom{w+1}{2}$. We choose $Z$ according to the following procedure.
\begin{itemize}
    \item[1.]
Let $Z''$ be a list consisting of the largest three odd entries of $M$.
    \item[]
We have $\sum_{\rm e} Z'' \leq 3(u-1)$ and hence $t(Z'') \geq (u-1)w-3(u-1) \geq 7u-7 > 5u$. Below, this will imply that $\nu(Z \setminus Z'') \geq 5$.
    \item[2.]
Begin with $Z=Z''$ and repeatedly add the largest even entry of $M\setminus Z$ to $Z$, until $M\setminus Z$ contains no even entries or until $Z$ satisfies $t \leq \max_{\rm{e}}(M\setminus Z)-2$. Let $m=0$ if $t=0$, $m=\max(M\setminus Z)$ if $t \geq 4$, and let $m$ be an entry of $M\setminus Z$ such that $4 \leq m \leq w-1$ if $t = 2$ (we show below that such an entry exists).
    \item[]
It follows from this definition that $t \geq 0$.
To show that a suitable choice of $m$ exists when $t=2$, suppose otherwise that $M\setminus Z \subseteq \{3,w\}$. Then, the sum of the odd entries in $M$ is at most $3(w-1)+3(\frac{w}{2}-1)=\frac{9w}{2}-6$, because each of the three odd entries in $Z$ is at most $w-1$, each odd entry in $M \setminus Z$ is a 3, and $\no(M)-\nu_5(M) \leq \frac{w}{2}+2$. This contradicts our assumption that the sum of the odd entries in $M$ is greater than $\frac{w(w-2)}{2}$ (note that $w \geq 10$).
\end{itemize}

For $i \in \{\tau-1,\tau\}$, because $\nu(Z \setminus Z'') \geq 2$, if $m_i$ is not added to $Z''$ in step 1 then it is added to $Z$ in step 2. Thus $(m_{\tau-1},m_{\tau})$ is a sublist of $Z$ and $Z$ is well-behaved. Let $k=\eceil{\frac{t+2}{3}}$ if $t\geq 12$ and $k=0$ otherwise. Let $(3^a,4^b,5^c,6^d)$ be the basic refinement of $(Z \setminus (m_{\tau}),m_{\tau}-k)$. If $m_\t \leq 6$, then $m_\t-k=m_\t$. If $m_\t \geq7$ then, as in Case 1a, $m_{\tau}-k \geq 7$. Using arguments similar to those in the previous cases and the following facts we can see that $a$, $b$, $c$, $d$ and $M\setminus Z$ satisfy the conditions of Lemma~\ref{Lemma:BaseDecomp_FewCrossOdds}.

\begin{itemize}
    \item
$a+c=\nu_{\rm o}(Z)=3$ and $(m,t) \neq (w,2)$. The former follows from the definition of $Z$ (note that $m_{\tau}-k$ and $m_{\tau}$ have the same parity). The latter follows from our choice of $m$.
    \item
$m \geq t+2$ if $t>0$. If $t=2$ this is obvious by our choice of $m$, so we may suppose that $t \geq 4$. We have $\sum(M \setminus Z)=\binom{w+1}{2}-\nu_{\rm o}(Z)+t > \binom{w+1}{2}$ because $\nu_{\rm o}(Z)=3$ and $t \geq 4$. However, the sum of the odd entries in $M \setminus Z$ is $\sigma$, and $\sigma \leq \binom{w+1}{2}$. Thus $M\setminus Z$ has an even entry and we have $m \geq \max_{\rm e}(M\setminus Z) \geq t+2$ by the definition of $Z$.
\end{itemize}

\noindent {\bf Case 3b.}  Suppose further that $\sigma > \binom{w+1}{2}$. We choose $Z$ according to the following procedure.
\begin{itemize}
    \item[1.]
Let $Z''$ be a sublist of $M$ with maximum sum subject to the constraints that $\nu(Z'')=\nu_{\rm o}(Z'')=6$ and $\nu_5(Z'') \leq 1$.
    \item[]
Such a sublist exists because $\nu_{\rm o}(M) \geq 6$ and $\nu_{\rm o}(M)-\nu_5(M) \geq 5$. These facts must hold because $\sigma > \binom{w+1}{2}$ and since either $\no(M) \leq w+3$ or $\nu_5(M) \leq w-1$ by the criteria for Case 3. We have $\sum_{\rm e} Z'' \leq 6(u-1)$ and hence $t(Z'') \geq (u-1)w-6(u-1)>3u$. Below, this will imply that $\nu(Z \setminus Z'') \geq 3$.
    \item[2.]	
Begin with $Z=Z''$ and repeatedly add to $Z$ the largest entry of $M\setminus Z$ not equal to $5$ until $Z$ satisfies $t\leq \max(M\setminus Z)-2$. Let $m=\max(M\setminus Z)$ if $t>0$ and let $m=0$ if $t=0$.
    \item[]	
It follows from this definition that $t \geq 0$. This process terminates with $M\setminus Z \nsubseteq \{5\}$, because $\sum (M\setminus Z) = \binom{w+1}{2}-\nu_{\rm o}(Z) +t \geq \frac{11w}{2}-(\frac{w}{2}+2)-1 \geq 5w-3$ and $5\nu_{5}(M\setminus Z) \leq 5\nu_{5}(M) \leq 5(w-1)$, using the criteria for Case 3 and the fact that $\nu_5(Z)=\nu_5(Z'') \in\{0, 1\}$. Note that $t \leq \max(M\setminus Z)-2 \leq w-2$.
\end{itemize}

We must have $m_\t \geq 7$ for otherwise, by the criteria for Case~3, $\sigma \leq 3(\frac{w}{2}+2)+5(w-4) =\frac{13w}{2}-14$, contradicting $\sigma > \binom{w+1}{2}$ (note $w \geq 10$). So, for $i \in \{\tau-1,\tau\}$, because $\nu(Z \setminus Z'') \geq 2$, if $m_i$ is not added to $Z''$ in step 1 then it is added to $Z$ in step 2 (note that if $m_{\tau-1}=5$ then $m_{\tau-1}\in Z''$). Thus $(m_{\tau-1},m_{\tau})$ is a sublist of $Z$ and $Z$ is well-behaved.

Let $k=\eceil{\frac{t+2}{3}}$ if $t\geq 12$ and $k=0$ otherwise. Let $(3^a,4^b,5^c,6^d)$ be the basic refinement of $(Z \setminus (m_{\tau}),m_{\tau}-k)$ if $m_{\tau}-k \neq 9$ and the basic refinement of $(Z \setminus (m_{\tau}),4,5)$ if $m_{\tau}-k = 9$. As in Case 1a, $m_{\tau}-k \geq 7$. Using arguments similar to those in the previous cases and the following facts we can see that $a$, $b$, $c$, $d$ and $M\setminus Z$ satisfy the conditions of Lemma~\ref{Lemma:BaseDecomp_FewCrossOdds}.
\begin{itemize}
    \item
$a+2c \leq w$. We have $a+2c \leq \nu_{\rm o}(Z)+\nu_{\rm 5}(Z)+1$ (equality occurs when $m_{\tau}-k = 9$). Thus, $a+2c \leq \frac{w}{2}+5 \leq w$ by the definition of $Z$ and because $\no(M)-\nu_5(M) \leq \frac{w}{2}+2$ and $\nu_5(Z) = \nu_5(Z'') \leq 1$.
    \item
$a+c \geq 6$ because $a+c= \no(Z) \geq \no(Z'') = 6$.
    \item
$b \geq 1$ as in Case 1a. \qedhere
\end{itemize}
\end{proof}

\vspace{0.3cm} \noindent{\bf Acknowledgements}

The first author was supported by Australian Research Council grants DE120100040 and DP150100506. The second author was supported by an Australian Postgraduate Award.

\bibliographystyle{acm}
\bibliography{references}

\end{document}